\newtheorem{question}{Question}[section]
\newtheorem{lemma}[question]{Lemma}
\newtheorem{theorem}[question]{Theorem}
\newtheorem{conjecture}[question]{Conjecture}
\newtheorem{remark}[question]{Remark}
\newcommand{\leqnomode}{\tagsleft@true}
\newcommand{\reqnomode}{\tagsleft@false}
\def\dd{\hbox{-}}
\DeclareMathOperator{\tw}{tw}
\DeclareMathOperator{\Hub}{Hub}
\DeclareMathOperator{\Core}{Core}
\DeclareMathOperator{\ta}{tree-\alpha}
\DeclareMathOperator{\tc}{tree-\overline{\chi}}
\newcounter{tbox}
\newcommand{\sta}[1]{\vspace*{0.3cm}\refstepcounter{tbox}\noindent{ \parbox{\textwidth}{(\thetbox) \emph{#1}}}\vspace*{0.3cm}}
\newcommand{\mylongtitle}[1]{%
  \ifodd\value{page}%
    \protect\parbox{0.97\linewidth}{#1}\hfill%
  \else%
    \hfill\protect\parbox{0.97\linewidth}{#1}%
  \fi%
}
\renewcommand{\S}{\mathcal{S}}
\newcommand{\CC}{\mathcal{C}_{\text{nc}}}
\newcommand{\cchi}{\overline{\chi}}
\newcommand{\otherlabel}[2]{\protected@edef\@currentlabel{#2}\label{#1}}
\mathchardef\mh="2D
\title[Tree independence number I. (Even hole, diamond, pyramid)-free graphs]{Tree independence number I. (Even hole, diamond, pyramid)-free graphs}
\author{Tara Abrishami$^{\ast \dagger}$}
\author{Bogdan Alecu$^{\ast \ast \mathparagraph}$}
\author{Maria Chudnovsky$^{\ast \ast \ast \amalg}$}
\author{Sepehr Hajebi$^{\mathsection}$}
\author{Sophie Spirkl$^{\mathsection \parallel}$}
\author{Kristina Vu\v{s}kovi\'{c}$^{\ast \ast \ddagger}$}
\address{$^{\ast}$Department of Mathematics, University of Hamburg, Germany}
\address{$^{\ast \ast}$School of Computing, University of Leeds, Leeds, UK}
\address{$^{\ast \ast \ast}$Princeton University, Princeton, NJ, USA}
\address{$^{\mathsection}$Department of Combinatorics and Optimization, University of Waterloo, Waterloo, Ontario, Canada}
\address{$^{\dagger}$ Supported by the National Science Foundation Award Number DMS-2303251 and the Alexander von Humboldt Foundation. This work was performed while the author was at Princeton University and supported by NSF-EPSRC Grant DMS-2120644.}
\address{$^{\amalg}$ Supported by NSF-EPSRC Grant DMS-2120644 and by AFOSR grant FA9550-22-1-0083.} 
     \address{$^{\mathparagraph}$ Supported by DMS-EPSRC Grant EP/V002813/1.} 
\address{$^{\parallel}$ We acknowledge the support of the Natural Sciences and Engineering Research Council of Canada (NSERC), [funding reference number RGPIN-2020-03912].
Cette recherche a \'et\'e financ\'ee par le Conseil de recherches en sciences naturelles et en g\'enie du Canada (CRSNG), [num\'ero de r\'ef\'erence RGPIN-2020-03912]. This project was funded in part by the Government of Ontario.}
\address{$^{\ddagger}$ Partially supported by DMS-EPSRC Grant EP/V002813/1.}
\address{Corresponding author email: b.alecu@leeds.ac.uk}
\date {\today}
\begin{document}

\maketitle

\begin{abstract}
    The tree-independence number $\ta$, first defined and studied by Dallard, Milani\v{c} and \v{S}torgel, is a variant of treewidth tailored to solving the maximum independent set problem. 

    Over a series of papers, Abrishami et al.\ developed the so-called central bag method to study induced obstructions to bounded treewidth. Among others, they showed that, in a certain superclass $\mathcal C$ of (even hole, diamond, pyramid)-free graphs, treewidth is bounded by a function of the clique number. In this paper, we relax the bounded clique number assumption, and show that $\mathcal C$ has bounded $\ta$. Via existing results, this yields a polynomial-time algorithm for the Maximum Weight Independent Set problem in this class. Our result also corroborates, for this class of graphs, a conjecture of Dallard, Milani\v{c} and \v{S}torgel that in a hereditary graph class, $\ta$ is bounded if and only if the treewidth is bounded by a function of the clique number. 
    
\end{abstract}

\section{Introduction}
All graphs in this paper are finite and simple. Let $G = (V(G), E(G))$ be a graph. An {\em induced subgraph} of $G$ is a subgraph of $G$ obtained by deleting vertices. More explicitly, if $X \subseteq V(G)$, then $G[X]$ denotes the subgraph of $G$ induced by $X$, with vertex set $X$ and edge set $E(G) \cap {X \choose 2}$. In this paper, we use induced subgraphs and their vertex sets interchangeably. 
For a graph $H$, we say that $G$ {\em contains} $H$ if $H$ is isomorphic to an induced subgraph of $G$, and
$G$
is {\em $H$-free} if it does not contain $H$. If $\mathcal{H}$ is a set of graphs, then we say that $G$ is  {\em $\mathcal{H}$-free} 
if $G$ is $H$-free for every $H \in \mathcal{H}$. 

A {\em tree decomposition $(T, \varphi)$ of $G$} consists of a tree $T$ and a map $\varphi: V(T) \to 2^{V(G)}$ satisfying the following: 
\begin{enumerate}[(i)]
\item For all $v \in V(G)$, there exists $t \in V(T)$ such that $v \in \varphi(t)$; 

\item For all $v_1v_2 \in E(G)$, there exists $t \in V(T)$ such that $v_1, v_2 \in \varphi(t)$; 
\item For all $v \in V(G)$, the subgraph of $T$ induced by $\{t \in V(T) \text{ s.t. } v \in \varphi(t)\}$ is connected. 
\end{enumerate}
The {\em width} of a tree decomposition $(T,\varphi)$ is $\max_{t\in V(T)}|\varphi (t)|-1$.
The {\em treewidth} of $G$, denoted by $\tw (G)$, is the minimum width of a tree decomposition of $G$.

A {\em stable set} in a graph $G$ is a set of pairwise non-adjacent vertices of $G$. The {\em independence number} $\alpha(G)$ of $G$ is the size of a maximum stable set in $G$. The {\em independence number} of a tree decomposition 
$(T, \varphi)$ of $G$ is $\max_{t \in V(T)} \alpha(G[\varphi(t)])$. The {\em tree independence number} of $G$, denoted $\ta(G)$, is the minimum independence number of a tree decomposition of $G$.

 The tree independence number was defined and studied by Dallard, Milani\v{c} and \v{S}torgel in \cite{dms2}, in the context of studying the complexity of the 
Maximum Weight Independent Set (MWIS)  problem on graph classes whose treewidth 
is large only due to the presence of a large clique. It is shown in \cite{dms2} that if a graph is given together with a tree decomposition with bounded independence number, then the MWIS problem 
can be solved in polynomial time. In \cite{dfgkm}, it is then shown how to compute such tree decompositions efficiently in graphs of bounded $\ta$, yielding an efficient algorithm for the MWIS problem for graphs of bounded $\ta$. 

In \cite{dms1}, a graph class $\mathcal{G}$ is called {\em $(\tw,\omega )$-bounded} if there  exists a function 
$f: \mathbb{N}\to \mathbb{N}$ such that for every graph $G\in \mathcal{G}$, and for every induced subgraph $H$ of $G$, $\tw (H)\leq f(\omega(H))$ (where $\omega(G)$ is the size of a largest clique in $G$).
Such a function $f$ is called a 
{\em $(\tw,\omega)$-bounding function} for the class $\mathcal{G}$.
By Ramsey's theorem, graph classes of bounded tree independence number are $(\tw,\omega )$-bounded.
Furthermore, all of the following graph classes have bounded tree independence number: graph classes of bounded treewidth, 
intersection graphs of connected subgraphs of graphs with treewidth at most $t$, for any fixed positive integer $t$
(these contain for example chordal graphs and circular arc graphs), and
graph classes in which the size of a minimal separator is bounded. More examples and further discussion of these parameters is given in \cite{dms1, dms2, dms3}.
The question remains whether the $(\tw,\omega )$-bounded property has algorithmic implications for the MWIS problem. This question would be settled if the following conjecture from \cite{dms2} holds true:

\begin{conjecture}[\cite{dms2}]\label{con:tw-omega-implies-ta}
    A class $\mathcal C$ of graphs is $(\tw, \omega)$-bounded if and only if it has bounded $\ta$.
\end{conjecture}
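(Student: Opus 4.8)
The plan is to treat the two implications separately, since they are of entirely different character. The direction ``bounded $\ta$ $\Rightarrow$ $(\tw,\omega)$-bounded'' is the routine one, and I would dispatch it in full via Ramsey's theorem. Suppose $\ta(G)\le k$ for every $G\in\mathcal C$. Because $\ta$ does not increase under passing to induced subgraphs, every induced subgraph $H$ of every $G\in\mathcal C$ also satisfies $\ta(H)\le k$. Fix such an $H$, set $\omega=\omega(H)$, and take a tree decomposition $(T,\beta)$ of $H$ with $\alpha(H[\beta(t)])\le k$ for all $t\in V(T)$. Each bag satisfies $\alpha(H[\beta(t)])\le k$ and $\omega(H[\beta(t)])\le\omega$, so by Ramsey's theorem $|\beta(t)|\le R(k+1,\omega+1)-1$. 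Hence $\tw(H)\le R(k+1,\omega+1)-2$, so $f(\omega)=R(k+1,\omega+1)-2$ is a $(\tw,\omega)$-bounding function for $\mathcal C$.

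For the substantive direction, ``$(\tw,\omega)$-bounded $\Rightarrow$ bounded $\ta$'', I would pass to the separator reformulation of the tree-independence number. It is known (see \cite{dms1,dms2}) that $\ta(G)$ equals, up to a universal constant factor, the least $k$ such that every induced subgraph $H$ of $G$, equipped with any vertex weighting, admits a \emph{balanced} separator $S$ with $\alpha(H[S])\le k$, i.e.\ a set $S$ meeting every component of $H-S$ in at most half the total weight, applied recursively in tree-like fashion. So it suffices to produce, for every induced subgraph $H$ and every weighting, a balanced separator whose \emph{independence number}---not its size---is bounded by a constant independent of $\omega(H)$.

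Let $f$ be the $(\tw,\omega)$-bounding function, fix $H$ with $\omega(H)=\omega$, and fix a weighting. Since $\tw(H)\le f(\omega)$, the graph $H$ has a balanced separator $S_0$ of \emph{size} at most $f(\omega)+1$. When $\omega$ lies below a fixed threshold $\omega_0$ this already suffices: $|S_0|\le f(\omega_0)+1$, hence $\alpha(H[S_0])\le f(\omega_0)+1$ is bounded, and the recursion yields $\ta(H)=O(f(\omega_0))$. The entire difficulty is therefore concentrated in the large-$\omega$ regime, where $S_0$ is large and may itself contain an arbitrarily large stable set, so the treewidth-based separator is useless for controlling $\alpha$.

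The heart of the argument---and the step I expect to be the main obstacle---is to convert, in the large-clique regime, this small-treewidth separator into one of bounded independence number. The only generic resource for lowering independence number is a clique, since a clique contributes only $1$ to $\alpha$; so one would hope for a dichotomy: either the cliques forced by large $\omega$ can be organized into a balanced separator of bounded independence number, or their failure to serve as useful cutsets exposes a clique-free substructure---a wall, or an analogous theta/prism/wheel configuration---of large treewidth but small clique number, directly contradicting $(\tw,\omega)$-boundedness. Establishing such a dichotomy in full generality is precisely the open content of the conjecture, and no general mechanism for it is known. This is exactly why the present paper secures the implication only within the structured class of (even hole, diamond, pyramid)-free graphs, where the central bag method supplies the missing control over how large cliques interact with candidate separators.
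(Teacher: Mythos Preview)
The statement you are addressing is a \emph{conjecture}, not a theorem of the paper; the paper contains no proof of it. The paper only notes in passing that the backward implication (bounded $\ta$ $\Rightarrow$ $(\tw,\omega)$-bounded) follows from Ramsey's theorem, and then proves the forward implication for the single specific class $\mathcal C$ of $(C_4,\text{diamond},\text{theta},\text{pyramid},\text{prism},\text{even wheel})$-free graphs via the central bag machinery of Sections~\ref{sec:tools}--\ref{sec:extending-bs}. There is therefore nothing to compare your proposal against.

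Your treatment of the easy direction is correct and matches what the paper asserts. For the hard direction, however, what you have written is not a proof but a sketch of a strategy that you yourself concede breaks down at the crucial step: you reduce to finding, for arbitrary $(\tw,\omega)$-bounded classes in the large-$\omega$ regime, balanced separators of bounded independence number, and then state outright that ``establishing such a dichotomy in full generality is precisely the open content of the conjecture, and no general mechanism for it is known.'' That is an accurate assessment of the situation, but it means your proposal does not prove the conjecture. The specific mechanism you gesture at---that large cliques either assemble into low-$\alpha$ separators or their failure produces a bounded-$\omega$ obstruction of large treewidth---has no known realization for general hereditary classes, and nothing in your outline indicates how to obtain one. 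In short: the easy direction is fine; the hard direction is, as you acknowledge, still open, and your proposal does not close it.
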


Before we state our main result, we define several types of graphs.
A {\em clique} in a graph is a set of pairwise adjacent vertices, and for an integer $t\geq 1$, $K_t$ denotes the {\em complete graph} on $t$ vertices. 
A {\em path} is a tree of maximum degree at most 2. A {\em path in $G$} is an induced subgraph of $G$ that is a path. 
A {\em diamond} is the graph obtained by deleting an edge from $K_4$. 
A {\em hole} of a graph $G$ is an induced cycle of length at least four. By $C_4$ we denote the hole of length four. 
A {\em wheel} $(H, w)$ is a hole $H$ and a vertex $w \in V(G)$ such that $w$ has at least three  neighbors in $H$. An {\em even wheel} is a wheel $(H, w)$ such that $w$ has an even number of neighbors in $H$.
A {\em theta} is a graph consisting of two vertices $a$ and $b$ and three distinct paths $P_1$, $P_2$, $P_3$ from
$a$ to $b$ such that any two of them induce a hole (so in particular $a$ and $b$ are nonadjacent, and hence all three paths are of length at least 2).
A {\em pyramid} is a graph consisting of a vertex $a$, a triangle $b_1, b_2, b_3$ (disjoint from $a$), and three paths $P_1$, $P_2$, $P_3$ from $a$ to $b_1, b_2, b_3$, respectively, 
such that any two of them induce a hole (so in particular, at least two of the three paths are of length at least 2).
A {\em prism} is a graph consisting of two disjoint triangles $a_1a_2a_3$ and $b_1b_2b_3$ and three paths $P_1$, $P_2, P_3$, with $P_i$ from $a_i$ to $b_i$, such that 
any two of them induce a hole.
Thetas, pyramids, and prisms are called {\em three-path configurations} (3PCs). 

We introduce the following notation, which we will use throughout the paper: 
\begin{itemize}
    \item $\mathcal{C}$ is the class of ($C_4$, diamond, theta, pyramid, prism, even wheel)-free graphs.

    \item $\mathcal{C}^*$ is the class of ($C_4$, diamond, theta, prism, even wheel)-free graphs (and in particular, $\mathcal{C} \subseteq \mathcal{C}^*$).
\end{itemize}

Our main result is the following.

\begin{theorem}\label{t1}
There exists an integer $\Gamma$ such that $\ta(G) \leq \Gamma$ for every graph $G \in \mathcal C$.
\end{theorem}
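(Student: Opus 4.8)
The plan is to reduce the statement to the existence of \emph{balanced separators that are unions of boundedly many cliques}, and then to extract such separators from the central bag method, now bookkeeping cliques rather than individual vertices. Call $X\subseteq V(G)$ a \emph{balanced separator} for a weight function $w\colon V(G)\to\mathbb{Z}_{\ge 0}$ if every component $D$ of $G-X$ satisfies $w(D)\le\tfrac12\,w(V(G))$. It is known that there is an absolute constant $c$ such that, if for some integer $k$ every graph in a hereditary class and every weight function on it admit a balanced separator $X$ with $\alpha(G[X])\le k$, then every graph in the class has tree-independence number at most $ck$. A union of $N$ cliques has independence number at most $N$, and $\mathcal C$ is hereditary (every induced subgraph of a graph in $\mathcal C$ again lies in $\mathcal C$), so it suffices to find a constant $N$ such that every $G\in\mathcal C$ and every weight function admit a balanced separator that is the union of at most $N$ cliques.

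Why focus on cliques, and why they are tame: we always have $\ta(G)\le\tw(G)+1$, so by the $(\tw,\omega)$-bound for $\mathcal C$ proved by Abrishami et al.\ the only obstruction to the above is the presence of large cliques; but a single clique has independence number $1$, so large cliques are exactly what a clique-union separator absorbs cheaply. The interaction of a clique with the rest of the graph is tightly controlled by $C_4$- and diamond-freeness: if $K$ is a clique of $G\in\mathcal C$, then every vertex outside $K$ has at most two neighbours in $K$ (three would yield a $K_4$, hence a diamond); any two vertices of $K$ have at most one common neighbour outside $K$ (two would close a $C_4$ or a diamond); and any two distinct maximal cliques meet in at most one vertex. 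Thus the attachments to a clique behave like attachments to the vertices of a bounded-degree structure.

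To produce the separator, given $w$, the plan is to re-run the central bag construction of the prior work while recording the separating set as a union of cliques: around a growing \emph{central bag} one peels off the (tame) attachments of the heavy cliques, using the bounded-treewidth behaviour away from large cliques to bound the number of clique-attachments that must be cut. The forbidden three-path configurations (theta, pyramid, prism) and even wheels are precisely what prevent induced paths from linking the cliques in uncontrolled ways, and hence keep the number of cliques in the separator bounded independently of how many large cliques $G$ contains or how large they are. Iterating until every remaining component is light then yields a balanced separator that is a union of at most $N$ cliques, for an absolute constant $N$.

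The crux is this last step. A graph in $\mathcal C$ may contain \emph{arbitrarily many} arbitrarily large cliques joined by long induced paths, and the separator must simultaneously cut all the heavy parts while remaining a union of a \emph{bounded} number of cliques. Obtaining a bound on this number that is uniform over the class---depending neither on the number nor on the sizes of the cliques present---is where the full strength of the central bag method, together with the structural restrictions coming from (theta, pyramid, prism, even wheel)-freeness, is needed; the bounded-clique-number analysis of the earlier paper only guarantees separators of bounded \emph{size}, and upgrading these to a bounded union of cliques in the presence of unbounded $\omega$ is the main technical content.
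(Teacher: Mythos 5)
Your high-level strategy is the same as the paper's: reduce bounded tree-independence number to the existence, for every weight function, of a balanced separator that is a union of boundedly many cliques (equivalently, has bounded clique cover number), via a Reed-type lemma, and then extract such separators from the central bag method while bookkeeping cliques instead of vertices. The reduction step and the observations about how cliques attach in ($C_4$, diamond)-free graphs are correct and do appear in the paper (as Lemmas~\ref{lemma:bs-to-ta} and~\ref{lemma:clique-nbrs} respectively).

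However, there is a genuine gap: the entire technical content of the theorem is deferred rather than proved. You write that ``the plan is to re-run the central bag construction \dots recording the separating set as a union of cliques'' and then concede that obtaining a uniform bound on the number of cliques ``is the main technical content''---but that is precisely the part that needs an argument, and it does not follow by simply re-running the bounded-$\omega$ proof. The paper needs two concrete new ingredients here. First, to show that the central bag $\beta_M$ itself has a balanced separator of clique cover number at most $9$ (Theorem~\ref{thm:mainthm-betabs}), it constructs the hub division from a \emph{trisimplicial elimination ordering} of $\Hub(G)$ (Lemma~\ref{lem:trisimplicial}), so that the first balanced hub $v_m$ has $\cchi(N(v_m)\cap\Hub(\beta_M))\le 3$; nothing in your sketch produces this. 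Second, and more importantly, lifting a balanced separator of $\beta_M$ back to one of $G$ with bounded clique cover number (Theorem~\ref{thm:extending-bs}) is where the paper departs from the earlier work: it builds an auxiliary bipartite graph between the components of $G\setminus(\beta_M\cup\tilde X)$ and those of $\beta_M\setminus\tilde X$, shows this graph has bounded degree and no long holes (using Lemma~\ref{lemma:common_nbrs} and Lemma~\ref{lemma:three_vtx_attachments}), invokes Wei\ss auer's theorem to bound its treewidth, and pulls a balanced separator back through this correspondence. Your alternative suggestion of ``iterating until every remaining component is light'' does not address why the number of cliques in the accumulated separator stays bounded across iterations---without the auxiliary-graph argument (or something equivalent), the count of clique-attachments that must be cut could grow with each peeling step. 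As it stands, the proposal is a correct restatement of the problem's reduction, not a proof.
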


Theorem \ref{t1} strengthens the main result of \cite{tw4},
while building on some of the ideas of its proof:

\begin{theorem}\label{tw4-t} {\em (\cite{tw4})}
For each $t>0$, there exists an integer $c_t$ such that for every $G\in \mathcal{C}$ with $\omega(G) \leq t$, we have $\tw (G)\leq c_t$.
\end{theorem}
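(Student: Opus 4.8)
The plan is to bound treewidth by constructing small balanced separators, after first reducing to graphs with no clique cutset, following the central bag method referenced in the introduction. If $G \in \mathcal C$ has a clique cutset $K$ (necessarily $|K| \le t$), then $\tw(G)$ equals the maximum of the treewidths of the atoms of the clique-cutset decomposition, each of which is an induced subgraph of $G$ and hence again lies in $\mathcal{C}$; so it suffices to bound $\tw(G)$ for clique-cutset-free $G \in \mathcal{C}$ with $\omega(G) \le t$. For these I would use the standard fact that $\tw(G)$ is bounded by a function of $s(G)$, the least $s$ such that every $W \subseteq V(G)$ admits a set $X$ with $|X| \le s$ for which each component of $G - X$ contains at most $|W|/2$ vertices of $W$. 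Thus the task becomes: produce, for every such $G$ and every $W$, a balanced separator of size bounded by a function of $t$. I would arrange for these separators to be unions of cliques, each of size $\le t$, so that the point is to bound the \emph{number} of cliques used by a function of $t$.

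Next I would record the local structure forced by the excluded configurations. Diamond-freeness gives that $G[N(v)]$ is a disjoint union of cliques for every $v$: a vertex of $N(v)$ with two non-adjacent neighbours in $N(v)$ would produce an induced diamond through $v$, so $G[N(v)]$ has no induced $P_3$; moreover each such clique, together with $v$, is a clique of $G$ and so has at most $t-1$ vertices. Combined with $C_4$-freeness, this forces any two non-adjacent vertices to have at most one common neighbour. I would then prove an attachment lemma describing how a vertex $v \notin V(H)$ sees a hole $H$: using $C_4$-freeness the neighbours of $v$ on $H$ admit no gap of length exactly one, and by diamond-freeness no three consecutive vertices, so they form arcs of size at most two separated by gaps of length at least two; if $v$ has at least three neighbours on $H$ then $(H,v)$ is a wheel, and even-wheel-freeness forces the number of neighbours to be odd. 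These statements are routine, but they are the tools that make the global argument run.

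The heart of the matter, and the step I expect to be the main obstacle, is the clique-cutset-free case. Here I would use the usual bridge between treewidth and connectivity: if no balanced separator of bounded size exists for some $W$, then $G$ contains a large well-linked set, which via Menger yields two large, well-separated sides joined by many internally disjoint induced paths. The goal is to extract from this abundance three paths that are pairwise anticomplete and which, together with suitable anchors on the two sides, induce a theta, a prism, or a pyramid, contradicting $G \in \mathcal C$. Producing such a configuration is exactly the delicate part: from many disjoint paths one must first pass to shortest ones (using $C_4$- and diamond-freeness to kill short chords and keep the union induced), then prune to a subfamily that is pairwise anticomplete, controlling vertices that attach to several paths by means of the attachment lemma and the clique structure of neighbourhoods, and finally adjust the endpoints so that the anchors form the two non-adjacent vertices, two triangles, or vertex-and-triangle of a $3$PC. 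Even-wheel-freeness is needed precisely to rule out the ``wheel-like'' attachments that would otherwise survive the pruning, and the explicit exclusion of the pyramid (which, unlike the theta and the prism, need not contain an even hole) is what lets all three outcomes be forbidden simultaneously.

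Finally I would assemble the pieces: the balanced separators produced above are unions of a bounded-in-$t$ number of cliques, each of size $\le t$, so their size is bounded by a function of $t$; feeding this into the balanced-separator bound on treewidth, and then re-gluing across the clique cutsets, yields the desired constant $c_t$. The genuinely hard part, where I expect essentially all the length of the proof to live, is the extraction-and-cleaning step of the previous paragraph: converting a purely quantitative failure of balanced separation into an honest induced three-path configuration, while simultaneously keeping the resulting separator expressible as boundedly many cliques rather than letting its size grow uncontrollably with $\omega$.
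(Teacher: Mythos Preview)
Your proposal has a genuine gap, and it lies precisely in the step you flag as the hard one. You name-check the central bag method in your first sentence but then describe something quite different: a direct extraction of a theta, prism, or pyramid from a large well-linked set. The obstruction to that extraction is not handled by the tools you list. When you try to pass from many internally disjoint paths to three that are pairwise anticomplete, the enemy is a single vertex $v$ that sees many of the paths. Diamond-freeness tells you $N(v)$ is a disjoint union of cliques, and $\omega \le t$ tells you each clique is small, but \emph{nothing in your toolkit bounds the number of cliques in $N(v)$}. So $v$ can have neighbours on arbitrarily many of your paths, and the configuration you land in is a wheel $(H,v)$ rather than a $3$PC. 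Even-wheel-freeness only forces the number of neighbours of $v$ on $H$ to be odd; odd wheels with many spokes are perfectly legal in $\mathcal C$, and your ``attachment lemma and the clique structure of neighbourhoods'' cannot prune them away. This is not a technicality: hub vertices of this kind are exactly the obstruction the class presents, and they are why the direct $3$PC-extraction line does not go through.

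The proof in \cite{tw4} (whose machinery this paper reuses and strengthens) confronts this head-on. It isolates the set $\Hub(G)$ of wheel centres, uses the fact that any such wheel which is not a bug, twin wheel, or universal wheel forces a \emph{star} cutset centred at the hub (Lemma~\ref{lemma:no_wheels} here), and then organises these star cutsets into a smooth collection of separations whose central bag $\beta_M$ has all the troublesome hubs pushed to the $A$-sides. Crucially, a trisimplicial elimination ordering of $\Hub(G)$ is used so that, in $\beta_M$, the surviving hub-neighbourhoods are covered by boundedly many cliques. One then finds a balanced separator of bounded clique-cover number in $\beta_M$ (where the wheel obstruction is gone, cf.\ Lemma~\ref{lemma:us-tw}) and lifts it back to $G$. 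Your outline skips this entire layer: the reduction to clique cutsets is not enough, because the relevant cutsets are star cutsets that are not cliques, and the ``cleaning'' you describe has no mechanism to neutralise a hub vertex whose neighbourhood meets unboundedly many of your paths.
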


Since the class of (even hole, diamond, pyramid)-free graphs
is a subclass of $\mathcal{C}$ (as even-hole-free graphs are ($C_4$, theta, prism, even wheel)-free),
the above two theorems imply, respectively, the following two statements about (even hole, diamond, pyramid)-free graphs.

\begin{theorem}\label{t1cor}
There exists an integer $c$ such that, for every (even hole, diamond, pyramid)-free graph $G$, we have $\ta (G)\leq c$.
\end{theorem}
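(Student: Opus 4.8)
The plan is to derive Theorem~\ref{t1cor} directly from Theorem~\ref{t1}. Since $\mathcal C$ is by definition the class of $(C_4,\text{diamond},\text{theta},\text{pyramid},\text{prism},\text{even wheel})$-free graphs, it suffices to show that every (even hole, diamond, pyramid)-free graph belongs to $\mathcal C$, and then take $c=\Gamma$. Such a graph is already diamond-free and pyramid-free by hypothesis, so the only thing to verify is that an even-hole-free graph is automatically $C_4$-free, theta-free, prism-free, and even-wheel-free. Equivalently, I would show that each of $C_4$, theta, prism, and even wheel contains an even hole as an induced subgraph.

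Three of these four cases are immediate or reduce to a one-line parity count. First, $C_4$ is itself an even hole, so $C_4$-freeness is automatic. Next, for thetas and prisms I would use that each configuration has three paths $P_1,P_2,P_3$, any two of which induce a hole. Writing $\ell_i$ for the length of $P_i$, the hole induced by $P_i$ and $P_j$ has length $\ell_i+\ell_j$ in a theta and $\ell_i+\ell_j+2$ in a prism (the extra $2$ accounting for the two triangle edges). By the pigeonhole principle two of $\ell_1,\ell_2,\ell_3$ share the same parity, so the hole they induce has even length. Hence every theta and every prism contains an even hole, so no even-hole-free graph can contain either.

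The only case needing a slightly longer argument is the even wheel. Given an even wheel $(H,w)$, the neighbors of $w$ on $H$ partition $H$ into an even number $2k$ of arcs of lengths $a_1,\ldots,a_{2k}$ with $\sum_i a_i=|H|$. If some $a_i$ is even then $a_i\geq 2$, and $w$ together with that arc induces a hole of even length $a_i+2\geq 4$; if instead every $a_i$ is odd, then $|H|=\sum_i a_i$ is a sum of an even number of odd integers and is therefore even, so $H$ itself is an even hole. Either way an even hole appears, so no even-hole-free graph contains an even wheel. This completes the inclusion (even hole, diamond, pyramid)-free $\subseteq\mathcal C$, and Theorem~\ref{t1} then gives the bound with $c=\Gamma$.

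I expect essentially no obstacle here: the entire content is the routine verification that even-hole-freeness already forbids the three additional obstructions built into the definition of $\mathcal C$, after which Theorem~\ref{t1} applies verbatim. The one point that deserves a little care is the even-wheel count, specifically checking that when $a_i$ is even the arc between consecutive neighbors of $w$ really does cut out an \emph{induced} hole (no internal vertex of the arc is adjacent to $w$, which holds precisely because the arc lies between \emph{consecutive} neighbors), and that the length is at least $4$.
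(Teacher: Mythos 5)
Your proposal is correct and follows exactly the paper's route: the paper derives Theorem~\ref{t1cor} from Theorem~\ref{t1} by observing that (even hole, diamond, pyramid)-free graphs form a subclass of $\mathcal C$ because even-hole-free graphs are ($C_4$, theta, prism, even wheel)-free. The only difference is that you spell out the routine parity verifications (including the correct handling of the even-wheel case via sectors), which the paper states without proof.
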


\begin{theorem}\label{tw4-tcor} {\em (\cite{tw4})}
For each $t>0$, there exists an integer $c_t$, such that for every (even hole, diamond, pyramid, $K_t$)-free graph $G$, $\tw (G)\leq c_t$.
\end{theorem}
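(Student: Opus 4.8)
The plan is to derive Theorem~\ref{tw4-tcor} directly from Theorem~\ref{tw4-t}, exactly as the sentence preceding the statement suggests. First I would establish the class containment
\[
\{\text{(even hole, diamond, pyramid)-free graphs}\}\subseteq\mathcal C,
\]
and then invoke Theorem~\ref{tw4-t}. Concretely, given an (even hole, diamond, pyramid, $K_t$)-free graph $G$, once I know $G\in\mathcal C$ I observe that $K_t$-freeness gives $\omega(G)\le t-1$, so Theorem~\ref{tw4-t} applied with parameter $t-1$ yields $\tw(G)\le c_{t-1}$; taking the constant of Theorem~\ref{tw4-tcor} to be $c_{t-1}$ then finishes the argument (for $t\ge 2$; the case $t=1$ is vacuous, as a $K_1$-free graph has no vertices). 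Thus the entire content of the deduction lies in the containment.

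To prove the containment, recall that $\mathcal C$ is the class of ($C_4$, diamond, theta, pyramid, prism, even wheel)-free graphs. Since $G$ is already assumed diamond-free and pyramid-free, it suffices to check that every even-hole-free graph is ($C_4$, theta, prism, even wheel)-free; equivalently, that each of a $C_4$, a theta, a prism, and an even wheel \emph{contains} an even hole. The first three are immediate: a $C_4$ is itself a hole of length $4$; in a theta with constituent paths of lengths $\ell_1,\ell_2,\ell_3$, two of the $\ell_i$ share a parity, and the corresponding pair of paths induces a hole of even length $\ell_i+\ell_j$; and in a prism the analogous pair of paths, closed up through the two triangle edges, induces a hole of even length $\ell_i+\ell_j+2$.

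The only case requiring a genuine argument is the even wheel, and this is where I would focus attention. Let $(H,w)$ be a wheel in which $w$ has an even number (hence at least four) of neighbors $v_1,\dots,v_{2k}$ on $H$, listed in cyclic order; these split $H$ into $2k$ arcs, the $i$-th of length $a_i\ge 1$, so that $\sum_{i=1}^{2k}a_i=|H|$. For each $i$ the vertex $w$ together with the arc from $v_i$ to $v_{i+1}$ induces a hole (or, when $a_i=1$, a triangle) of length $a_i+2$. If some such hole with $a_i\ge 2$ has even length we are done; otherwise every $a_i$ is odd, and then $|H|=\sum_{i=1}^{2k}a_i$ is a sum of $2k$ odd numbers, hence even, so $H$ itself is an even hole. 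Either way an even hole appears, which establishes the containment.

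I expect no serious obstacle here: the whole proof is a short sequence of parity computations layered on top of the already-proved Theorem~\ref{tw4-t}. The even-wheel sector count is the subtlest of these, but it is still routine, and the remaining work is bookkeeping about which forbidden configuration implies which.
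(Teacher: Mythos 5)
Your proposal is correct and follows exactly the route the paper takes: it deduces Theorem~\ref{tw4-tcor} from Theorem~\ref{tw4-t} via the containment of (even hole, diamond, pyramid)-free graphs in $\mathcal C$, which the paper asserts in one sentence ("even-hole-free graphs are (theta, prism, even wheel)-free") without proof. Your parity arguments filling in that assertion, including the sector count for even wheels, are all sound.
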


Thus Theorem~\ref{t1cor} yields a 
polynomial-time algorithm for the MWIS problem in this class. It is in fact known that for a certain superclass of this class, the MWIS problem can be solved in polynomial time  \cite{pmcpaper},
which we discuss further below. But the algorithmic method shown here is of independent interest, since it might be possible to extend it to (even-hole, diamond)-free graphs (for which we do not yet have a polynomial time algorithm to solve the MWIS problem).

\begin{conjecture}\label{con:ehd}
The class of (even hole, diamond)-free graph has bounded tree-$\alpha$.
\end{conjecture}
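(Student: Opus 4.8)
The plan is to bound $\ta(G)$ by exhibiting, for every $G \in \mathcal{C}$, a tree decomposition all of whose bags have bounded independence number, and the cleanest route is to reduce this to a separator statement. Specifically, it suffices to find a constant $k$ so that every $G \in \mathcal{C}$, together with any weight function $w \colon V(G) \to \mathbb{R}_{\geq 0}$, admits a set $S \subseteq V(G)$ with $\alpha(G[S]) \leq k$ such that every component of $G \setminus S$ has weight at most $\tfrac{1}{2}w(V(G))$. Iterating such weighted balanced separators along a recursion tree produces a tree decomposition whose every bag has independence number $O(k)$; since $\mathcal{C}$ is hereditary, the separator hypothesis is automatically available for every induced subgraph encountered in the recursion. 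This reduction is by now standard in the tree-independence-number literature, so the entire problem collapses to establishing the existence of balanced separators of bounded $\alpha$, uniformly over $\mathcal{C}$.

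To produce such separators I would build directly on the central bag method and on the proof of Theorem~\ref{tw4-t}, but track the \emph{independence number} of the separators rather than their \emph{size}. The essential observation is that the balanced separators produced in the bounded-clique regime of \cite{tw4} have size $O(\omega(G))$ precisely because they are assembled from a bounded number of cliques (plus a bounded number of additional vertices). The excluded configurations do the structural work: $C_4$- and diamond-freeness force the local structure around any attachment to be clique-like, while (theta, pyramid, prism, even wheel)-freeness rules out the grid- and wall-like attachments one would otherwise need in order to build large treewidth. Consequently, rather than multiplying the number of cliques by $\omega(G)$ to bound a separator's \emph{cardinality}, I would instead observe that a union of $r$ cliques has independence number at most $r$, so the very same separators satisfy $\alpha(G[S]) \leq k$ for a constant $k$ that does \emph{not} depend on $\omega(G)$.

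Concretely, the argument would run through the same recursion as in \cite{tw4}: given $G$ and $w$, one locates a bounded family of cliques whose union separates $G$ in a balanced way; when no such balanced clique-union separator exists, the central bag method extracts a ``central bag'' whose residual structure is tightly controlled by a bounded hub, and one recurses inside it. The hub and its controlled neighborhood again contribute only boundedly many cliques (equivalently, boundedly many vertices of bounded independence number) to any separator, so the invariant $\alpha(G[S]) \leq k$ is preserved throughout the recursion. Once this balanced-separator statement is established uniformly over $\mathcal{C}$, Theorem~\ref{t1} follows immediately from the reduction of the first paragraph.

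The main obstacle is not the reduction, which is routine, but the bookkeeping needed to certify that \emph{every} separator arising in the central bag recursion is a union of boundedly many cliques, independently of $\omega(G)$. In the bounded-$\omega$ proof one can afford to bound each quantity by an arbitrary function of $\omega$; here each such estimate must instead be traced back to a bounded \emph{count} of cliques (or of bounded-$\alpha$ pieces), and the interaction between large cliques and the wheels and holes that the decomposition navigates must be handled carefully so that this clique count, and hence $\alpha(G[S])$, never grows with $\omega(G)$. Propagating this clique-counting invariant through every case of the decomposition is where the genuine difficulty lies.
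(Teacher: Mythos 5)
You were asked about Conjecture~\ref{con:ehd}, which the paper does \emph{not} prove: it is stated explicitly as an open problem, and the paper's actual result, Theorem~\ref{t1}, concerns the strictly smaller class $\mathcal{C}$, which additionally excludes pyramids. Your proposal is, in substance, a sketch of the proof of Theorem~\ref{t1}: you quantify over $G \in \mathcal{C}$ throughout and you lean on pyramid-freeness as one of the exclusions ``doing the structural work.'' That does not address the conjecture. The class of (even hole, diamond)-free graphs contains pyramids (it sits inside $\mathcal{C}^*$ but not inside $\mathcal{C}$), and pyramids are exactly where the machinery you propose to reuse breaks down: Lemma~\ref{lemma:bounding-nbrhood-helper} requires the relevant vertex not to be a pyramid apex, Lemma~\ref{lemma:balanced_vtx_bs} and Theorem~\ref{thm:mainthm-betabs} are stated only for pyramid-free graphs, and the paper points out concretely (via $k$-pyramids) that admitting pyramids destroys the polynomial-minimal-separator property that underlies the known algorithmic results for the pyramid-free setting. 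So the one hypothesis your argument silently relies on is the one the conjecture removes.

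Even read charitably as a proof of Theorem~\ref{t1}, the proposal has a gap of a different kind: the reduction from weighted balanced separators of bounded independence (or clique cover) number to bounded $\ta$ is indeed available (Lemma~\ref{lemma:bs-to-ta}), but your final paragraph concedes that propagating the bounded-clique-count invariant through every case of the central bag recursion is ``where the genuine difficulty lies'' and leaves it undone. That propagation is the entire content of Sections~3 and~4 of the paper (in particular the hub division, Theorem~\ref{thm:mainthm-betabs}, and the extension argument of Theorem~\ref{thm:extending-bs}, which the authors note is where their proof departs significantly from the bounded-$\omega$ argument of the earlier work). Deferring it means no proof has been given for $\mathcal{C}$, and a fortiori none for the (even hole, diamond)-free class of the conjecture.
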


\subsection*{Further context}

First, observe that the class $\mathcal C$ was already known to be $(\tw, \omega)$-bounded \cite{tw4}, so Theorem~\ref{t1} corroborates Conjecture~\ref{con:tw-omega-implies-ta} for this class. Although many of the ingredients of our proof are class-specific, it is worth investigating whether any of our methods can be used for tackling the conjecture in general.
 
The second consideration is an algorithmic one. Even-hole-free graphs have been studied extensively since the 1990s, and yet despite their structural similarity to perfect graphs, the complexity of some fundamental 
optimization problems that are solvable in polynomial time for perfect graphs is not known for even-hole-free graphs. Indeed, while we can find maximum cliques in even-hole-free graphs efficiently by a result shown by Farber \cite{farber} and later Alekseev \cite{alekseev} stating that $C_4$-free graphs have quadratically many maximal cliques, we still do not have efficient algorithms for the MWIS, minimum vertex coloring, and minimum clique cover problems. 

In this direction, a number of subclasses of even-hole-free graphs have been studied: for instance, in \cite{kmv} it was shown that (even hole, diamond)-free graphs can be colored in polynomial time. A sketch of that proof is as follows. First, a decomposition theorem is obtained for (even hole, diamond)-free graphs: these graphs are decomposed by star cutsets that partition into two cliques, and by 2-joins, into line graphs of a tree plus at most two vertices. This decomposition theorem is used to show that (even hole, diamond)-free graphs always have a vertex that is either simplicial or of degree 2. This then implies that the class is ``$\beta$-perfect,'' which in turn yields a polynomial time coloring algorithm which proceeds by coloring greedily on a particular, easily constructible
 ordering of the vertices. 

Our current paper continues this study by investigating the MWIS problem, in the hope of extending the methods presented here to (even hole, diamond)-free graphs (see Conjecture~\ref{con:ehd}).
We now survey some subclasses of even-hole-free graphs in relation to the MWIS
problem. Recall that even-hole-free graphs are ($C_4$, theta, prism, even wheel)-free.

\vspace{2ex}

\noindent
{\bf A superclass of (even hole, pyramid)-free graphs}

\vspace{2ex}

\noindent
In \cite{pmcpaper}, a superclass of (even hole, pyramid)-free graphs is studied, namely the class of ($C_4$, theta, prism, pyramid, turtle)-free graphs 
(where a {\em turtle} is a graph derived from a wheel that cannot be present in even-hole-free graphs, or in fact, in even-wheel-free graphs). This yields, in particular, a polynomial-time algorithm for MWIS in our class $\mathcal C$. However, their method (which we outline below) does not generalize beyond the pyramid-free setting. In contrast, we believe the method we present in this paper extends to the larger class of (even hole, diamond)-free graphs.

To understand why the results from \cite{pmcpaper} do not generalize, we outline their procedure for solving MWIS here. It is shown in \cite{pmcpaper} that the class under study has polynomially many minimal separators, and that the set of all minimal separators can be constructed in  polynomial time. 

The number of minimal separators of a graph is related to the number of potential maximal cliques (PMCs) introduced by  Bouchitt\'e and Todinca in
\cite{DBLP:journals/siamcomp/BouchitteT01, DBLP:journals/tcs/BouchitteT02}.
For a graph $G$, denote by $n$  the number of vertices, $m$ the number of edges, $s$ the number of minimal separators,  and $p$ the number of PMCs in  $G$.
In \cite{DBLP:journals/tcs/BouchitteT02}, it is proved that
$p$ is in $\mathcal{O} (ns^2+ns+1)$, and that given the list of minimal separators, the PMCs of $G$ can be listed in time $\mathcal{O}(n^2ms^2)$.
In \cite{DBLP:conf/soda/LokshantovVV14}, based on \cite{DBLP:conf/stacs/FominV10}, it is proved that given the list of PMCs, the MWIS problem can be solved in
$\mathcal{O} (n^5mp)$.
So, summing everything up, the ability to list all minimal separators in polynomial time implies a polynomial-time algorithm for the MWIS problem.

On the other hand, if pyramids are allowed in even-hole-free graphs, the polynomial minimal separator property is lost, as witnessed for example by a {\em $k$-pyramid}  (a graph
consisting of a clique $B=\{ b_1,\ldots, b_k\}$, a vertex $a$ and paths $P_i= a\ldots b_i$, for $i=1, \ldots ,k$, such that any 3 of them induce a pyramid). 

\vspace{2ex}

\noindent
{\bf A subclass of (even hole, pyramid)-free graphs}

\vspace{2ex}

\noindent
The class of (even hole, $K_3$)-free graphs is a proper subclass of (even hole, pyramid)-free graphs.
In \cite{cameron-vuskovic} it is shown that (even hole, $K_3$)-free graphs (and in fact (theta, even wheel, $K_3$)-free graphs)
have treewidth at most 5.
On the other hand, in \cite{layered-wheels}, a certain construction called a ``layered wheel'' is given, which shows that (even hole, $K_4$)-free graphs
do not have bounded treewidth. The construction is full of diamonds, so it was conjectured in \cite{layered-wheels} and proved in \cite{tw9}
that (even hole, diamond, $K_4$)-free graphs have bounded treewidth (in fact, the result from \cite{tw9} pertains to more general classes of even-hole-free graphs obtained by forbidding any clique and a certain type of graph which contains diamonds).

The question remains whether the following is true (it can be shown that  the outcome holds when forbidding, in addition, another family of graphs, called ``generalized $k$-pyramids'' \cite{tw10}).

\begin{conjecture}\label{con:ehKt}
The class of (even hole, $K_t$)-free graph has logarithmic treewidth.
\end{conjecture}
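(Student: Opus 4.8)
The plan is to prove the stronger statement that every (even hole, $K_t$)-free graph admits a \emph{balanced separator} of bounded size, which yields logarithmic treewidth through a standard recursion. Recall that if every member $H$ of a hereditary class satisfies $\sep(H)\le s$ --- meaning there is $S\subseteq V(H)$ with $|S|\le s$ such that every component of $H-S$ has at most $\tfrac{2}{3}|V(H)|$ vertices --- then one builds a tree decomposition recursively: choose such an $S$, recurse on each component of $H-S$, and let the bag at each node be the union of the separators chosen along the path from the root. Since each recursive step removes a constant fraction of the vertices, the recursion has depth $O(\log n)$, so the bags have size $O(s\log n)$ and $\tw(G)=O(s\log n)$ for every $n$-vertex $G$ in the class. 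It therefore suffices to find a function $g$ with $\sep(G)\le g(t)$ for every (even hole, $K_t$)-free $G$; heredity of the class then supplies the same bound for every induced subgraph, so the recursion goes through. This sufficient condition is consistent with the layered wheel of \cite{layered-wheels}: its treewidth grows with the number of layers, hence logarithmically in $n$, yet a bounded set of vertices near a middle layer splits it into two comparable halves, and the $\log n$ factor enters only through the depth of the recursion.

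The second step is to extract these bounded separators from the structure of even-hole-free graphs. Following the central bag method and the approach of \cite{tw4} and the present paper, I would fix the uniform weighting, take an inclusion-minimal balanced separator $S$, and analyze how the components of $G-S$ attach to $S$. Since $G$ is $K_t$-free, every clique that arises in this analysis --- in particular the hub of any wheel, or the clique at the ends of the paths of a pyramid --- has fewer than $t$ vertices, and so is a legitimate bounded ingredient of a separator. The target is a dichotomy: some bounded family of cliques (plus a bounded number of extra vertices) already balances the weight. The $k$-pyramid is the guiding example: deleting its apex together with its central clique $B$ (of size at most $t-1$) disconnects the configuration into its $k\ge 3$ paths, each carrying at most a third of the total weight, so here a separator of size at most $t$ suffices.

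The main obstacle is to rule out, for the balanced separator, exactly the unbounded ``spread'' that is responsible for the unbounded treewidth of (even hole, $K_4$)-free graphs, i.e.\ to treat the generalized $k$-pyramids and the layered-wheel-type configurations head-on rather than by forbidding them as in \cite{tw10}. Here the presence of diamonds is decisive, since it is the diamonds that allow the nested wheels of a layered wheel; without the diamond-free hypothesis exploited in \cite{tw9} one cannot conclude that the relevant neighborhoods are bounded. The crux must therefore be a structural statement to the effect that unbounded treewidth in this class is always ``one-dimensional'' (arising from a logarithmically deep hierarchy of bounded separations, in line with the grid-free intuition --- note that grids contain $C_4$ and so are excluded) and never ``two-dimensional''. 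Proving this, and in particular controlling how wheels and pyramids with bounded clique hubs can be stacked while keeping the graph even-hole-free, is where I expect the genuine difficulty to lie.
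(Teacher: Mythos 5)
The statement you are addressing is Conjecture~\ref{con:ehKt}, which the paper explicitly leaves open: no proof is given, and the authors only remark that the conclusion is known under the additional hypothesis of excluding a further family of graphs, the generalized $k$-pyramids \cite{tw10}. Your proposal is therefore not being measured against an argument in the paper, and on its own terms it is not a proof. The reduction in your first step is sound as far as it goes: if every induced subgraph of an $n$-vertex graph in the class has a (two-thirds) balanced separator of size at most $g(t)$, the standard recursion gives treewidth $O(g(t)\log n)$. But this merely restates the conjecture in an essentially equivalent form; the entire difficulty is in establishing the existence of such separators, and your second and third steps do not do this. Taking an ``inclusion-minimal balanced separator'' gives no control on its size, and the central bag machinery of \cite{tw4} and of the present paper leans on diamond-freeness precisely where you would need it most (e.g.\ Lemma~\ref{lemma:clique-nbrs}, Lemma~\ref{lemma:shields}, and the uniqueness of the maximal clique containing a given edge); you acknowledge this yourself, and the decisive structural statement --- that unbounded treewidth in (even hole, $K_t$)-free graphs is always ``one-dimensional'' --- is announced rather than proved. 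That statement essentially \emph{is} the conjecture.

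Two smaller points. First, your guiding computation for the $k$-pyramid is not correct as stated: deleting the apex together with the clique $B$ leaves the $k$ paths as components, but under the uniform weighting one of those paths may carry almost all of the weight, so this set alone need not be balanced (one must also cut inside the heavy path; harmless in this example, but symptomatic of the fact that ``bounded ingredients'' do not automatically assemble into a bounded balanced separator). Second, be careful not to conflate the two notions of balanced separator in play: Lemma~\ref{lemma:bs-to-tw} concerns separators for \emph{every} weight function and yields \emph{bounded} treewidth with no logarithmic factor, which is false for this class by the layered wheels of \cite{layered-wheels}; so any correct argument must genuinely restrict to the unweighted (or single-weighting) version and pay the $\log n$ through recursion depth, as you indicate --- but this distinction is exactly why none of the weighted-separator lemmas quoted from this paper can be applied directly to obtain what you need.
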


\subsection*{Terminology and notation}

If $G$ is a path or a cycle, the {\em length} of $G$ is $|E(G)|$. By $P_k$ we denote the path on $k$ vertices. If $P = p_1 \dd p_2 \dd \cdots \dd p_k$, then $P^* = P \setminus \{p_1, p_k\}$ denotes the {\em interior} of the path $P$, and $p_1, p_k$ are its {\em ends}.

Let $G$ be a graph and let $v \in V(G)$. The {\em open neighborhood of $v$ in $G$}, denoted $N_G(v)$, is the set of vertices of $V(G)$ adjacent to $v$. The {\em closed neighborhood of $v$ in $G$}, denoted $N_G[v]$, is the union of $\{v\}$ and $N_G(v)$. Let $X \subseteq V(G)$. The {\em open neighborhood of $X$ in $G$}, denoted $N_G(X)$, is the set of vertices of $V(G) \setminus X$ with a neighbor in $X$. The {\em closed neighborhood of $X$ in $G$}, denoted $N_G[X]$, is the union of $X$ and $N_G(X)$. When the graph $G$ is clear from context, we omit the subscript $G$ from the open and closed neighborhoods. If $X, Y \subseteq V(G)$, we say {\em $X$ is anticomplete to $Y$} if there are no edges with one endpoint in $X$ and one endpoint in $Y$. We say that $X$ {\em has a neighbor in $Y$} if $X$ is not anticomplete to $Y$. We say that $v$ is anticomplete to $X$ if $\{v\}$ is anticomplete to $X$. 

A {\em clique} is a set of pairwise adjacent vertices. 
The {\em clique number} of $G$, denoted by $\omega(G)$, is the size of a maximum clique in $G$. The {\em chromatic number} of $G$, denoted by $\chi(G)$, is the minimum number of sets of a partition of $V(G)$ into stable sets. The {\em clique cover number} of $G$, denoted by $\cchi(G)$, is the minimum number of sets of a partition of $V(G)$ into cliques (note that some sources use ``clique cover number'' to refer the minimum number of cliques required to cover the edges of $G$, and use ``node clique cover number'' or ``clique partition number'' for what we call ``clique cover number'' here; for the avoidance of doubt, all clique covers in this paper will be vertex clique covers, that is, partitions of a vertex set into cliques).

Let $(H, v)$ be a wheel. We call $v$ the {\em center}, or {\em hub} of the wheel. 
A {\em sector of $(H, v)$} is a path $P \subseteq H$ of length at least one such that the ends of $P$ are adjacent to $v$ and $P^*$ is anticomplete to $v$. A sector of $(H, v)$ is {\em long} if it has length greater than one, and short otherwise. 

If paths $P_1=a \dd \ldots \dd b_1$, $P_2=a \dd \ldots \dd b_2$ $P_3=a \dd \ldots \dd b_3$ induce a pyramid, then
we say that the vertex $a$ is  the {\em apex} of this pyramid. 

A {\em star cutset} of a graph $G$ is a set $C \subseteq V(G)$ such that $G \setminus C$ is not connected, and there exists $v \in V(G)$ with $v \in C \subseteq N[v]$. We call $v$ a {\em center} of the star cutset $C$.
We say that the star cutset is {\em full} if $C = N[v]$. A {\em clique cutset} is a star cutset that is also a clique.

\section{Tools}
\label{sec:tools}
In this section, we describe the tools and preliminary results needed to prove that graphs in $\mathcal{C}$ have bounded tree-$\alpha$.

\subsection{$\chi$-boundedness of the complements and the tree-clique cover number}

Many of the results from \cite{tw4} that we use provide bounds on the size of certain sets in terms of $\omega$, but in actuality the proofs work by exhibiting clique covers of those sets. Correspondingly, our arguments revolve around clique covers rather than stable sets. As such, it is natural to define the {\em tree-clique cover number} $\tc(G)$ of a graph $G$ analogously to $\ta$. More explicitly, we define the clique cover number of a tree decomposition $(T, \varphi)$ as the maximum value of $\cchi(G[\varphi(t)])$ over nodes $t \in V(T)$, and we let $\tc(G)$ be the minimum value of the clique cover number over all tree decompositions of $G$. 

We note that, for $C_4$-free graphs, statements about $\tc$ have $\ta$ analogues, and vice-versa. Indeed, in one direction, $\alpha(G) \leq  \cchi(G)$ for any graph $G$. In the other direction, by a result of Wagon \cite{wagon}, the class of $2K_2$-free graphs is $\chi$-bounded, meaning that there exists a non-decreasing function $f : \mathbb N^+ \to \mathbb N^+$ such that, for any $2K_2$-free graph $G$, $\chi(G) \leq f(\omega(G))$. In fact, from \cite{wagon}, one may take $f(\omega(G))$ to be ${\omega(G) + 1 \choose 2}$. We record this in a lemma, noting that the complement of a $C_4$-free graph $G$ is $2K_2$-free:

\begin{lemma}[\cite{wagon}] \label{lem:chi-bound}
    Let $G$ be a $C_4$-free graph. Then $\cchi(G) \leq {\alpha(G) + 1 \choose 2}$.
\end{lemma}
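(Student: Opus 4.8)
The statement is, via complementation, precisely Wagon's theorem: writing $\bar G$ for the complement of $G$, we have $\cchi(G) = \chi(\bar G)$ and $\alpha(G) = \omega(\bar G)$, and $\bar G$ is $2K_2$-free exactly because $G$ is $C_4$-free (the complement of $C_4$ is $2K_2$). So it would suffice to quote \cite{wagon}. The plan, though, is to give the short self-contained argument directly on $G$, since the resulting explicit partition into cliques is the kind of object we reuse; the identity $\binom{\alpha+1}{2} = \binom{\alpha}{2} + \alpha$ is a strong hint that one clique should be charged to each pair of vertices of a maximum stable set and one to each single vertex of it.

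Concretely, I would fix a maximum stable set $S = \{s_1, \ldots, s_\alpha\}$, where $\alpha = \alpha(G)$. Since $S$ is maximum it is maximal, so every $v \in V(G) \setminus S$ has at least one neighbor in $S$. I then partition $V(G) \setminus S$ as follows: if $v$ has a unique neighbor $s_i$ in $S$, place $v$ in a class $D_i$; if $v$ has at least two neighbors in $S$, let $i < j$ be the two smallest indices of its neighbors in $S$ and place $v$ in a class $E_{ij}$. Finally, put each $s_i$ into $D_i$. This produces $\alpha$ classes $D_i$ and at most $\binom{\alpha}{2}$ classes $E_{ij}$, together partitioning $V(G)$.

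It then remains to check that each class is a clique. For $E_{ij}$: every vertex of $E_{ij}$ is adjacent to both $s_i$ and $s_j$, so if two distinct $v, w \in E_{ij}$ were non-adjacent, then $\{s_i, v, s_j, w\}$ would induce a $C_4$ (using $s_is_j \notin E(G)$, as $S$ is stable), contradicting $C_4$-freeness. For $D_i$: the only neighbor in $S$ of each $v \in D_i \setminus \{s_i\}$ is $s_i$, so if two distinct $v, w \in D_i \setminus \{s_i\}$ were non-adjacent, then $(S \setminus \{s_i\}) \cup \{v, w\}$ would be a stable set of size $\alpha + 1$, contradicting the choice of $S$. Hence every class is a clique, and $\cchi(G) \le \alpha + \binom{\alpha}{2} = \binom{\alpha+1}{2}$.

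The only subtle point — and what I would flag as the main obstacle — is the verification that the $D_i$ are cliques: this is exactly where I use that $S$ is \emph{maximum} (not merely maximal), through the exchange argument that would otherwise yield a larger stable set. Everything else is an immediate consequence of the definition of $C_4$-freeness, and the bookkeeping is designed so that the number of classes matches $\binom{\alpha+1}{2}$ on the nose.
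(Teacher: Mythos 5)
Your proof is correct. The paper does not actually prove this lemma---it simply cites Wagon's theorem that $2K_2$-free graphs satisfy $\chi \leq \binom{\omega+1}{2}$ and passes to the complement---and your direct argument (partitioning $V(G)$ into the cliques $D_i$ of vertices seeing only $s_i$ and the cliques $E_{ij}$ of vertices seeing both $s_i$ and $s_j$) is precisely the complement of Wagon's original proof, so the two approaches coincide; yours just has the minor benefit of being self-contained and of exhibiting the clique cover explicitly.
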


The above discussion immediately yields the following:

\begin{lemma} \label{lem:ta-vs-tc}
    Let $G$ be a $C_4$-free graph. Then $\ta(G) \leq \tc(G) \leq {\ta(G) + 1 \choose 2}$.
\end{lemma}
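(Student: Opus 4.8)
The plan is to establish the two inequalities of Lemma~\ref{lem:ta-vs-tc} separately, each by taking an optimal tree decomposition witnessing one parameter and arguing that it also bounds the other parameter. The key observation is that both $\ta$ and $\tc$ are defined as a minimum over all tree decompositions of $G$, where each bag is scored by a local quantity ($\alpha$ of the induced bag, respectively $\cchi$ of the induced bag). Since a single tree decomposition can be used to bound both quantities simultaneously, it suffices to relate the per-bag scores.

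For the left inequality $\ta(G) \leq \tc(G)$, I would take a tree decomposition $(T, \varphi)$ of $G$ that is optimal for $\tc$, so that $\cchi(G[\varphi(t)]) \leq \tc(G)$ for every $t \in V(T)$. For each bag, the trivial bound $\alpha(H) \leq \cchi(H)$ (valid for any graph $H$, since a stable set meets each clique of a clique cover in at most one vertex) gives $\alpha(G[\varphi(t)]) \leq \cchi(G[\varphi(t)]) \leq \tc(G)$. Hence this same decomposition has independence number at most $\tc(G)$, and since $\ta(G)$ is the minimum independence number over \emph{all} tree decompositions, we get $\ta(G) \leq \tc(G)$. Note this direction does not even use the $C_4$-free hypothesis.

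For the right inequality $\tc(G) \leq \binom{\ta(G)+1}{2}$, I would take a tree decomposition $(T,\varphi)$ optimal for $\ta$, so that $\alpha(G[\varphi(t)]) \leq \ta(G)$ for every $t$. Here the $C_4$-free assumption enters: each induced subgraph $G[\varphi(t)]$ is itself $C_4$-free (the class is hereditary under induced subgraphs), so Lemma~\ref{lem:chi-bound} applies to give $\cchi(G[\varphi(t)]) \leq \binom{\alpha(G[\varphi(t)])+1}{2} \leq \binom{\ta(G)+1}{2}$, where the last step uses that $x \mapsto \binom{x+1}{2}$ is non-decreasing on the nonnegative integers. Thus this decomposition has clique cover number at most $\binom{\ta(G)+1}{2}$, and taking the minimum over all decompositions yields $\tc(G) \leq \binom{\ta(G)+1}{2}$.

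I do not anticipate a genuine obstacle: both directions follow immediately once one observes that a fixed tree decomposition simultaneously witnesses bounds on both parameters, reducing the whole statement to the per-bag inequalities $\alpha(H) \leq \cchi(H)$ and (for $C_4$-free $H$) $\cchi(H) \leq \binom{\alpha(H)+1}{2}$, the latter being exactly Lemma~\ref{lem:chi-bound}. The only points requiring a word of care are that $C_4$-freeness is inherited by each induced subgraph $G[\varphi(t)]$, and that the binomial bounding function is monotone so that the per-bag bounds can be replaced by the global quantity $\ta(G)$.
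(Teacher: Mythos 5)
Your proposal is correct and matches the paper's intent exactly: the paper derives this lemma "immediately" from the per-bag inequalities $\alpha(H) \leq \cchi(H)$ and Lemma~\ref{lem:chi-bound}, applied bag-by-bag to optimal tree decompositions, which is precisely your argument.
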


We remark that $C_4$-freeness (or at least, $\chi$-boundedness in the complement) is necessary to obtain a relationship between $\tc$ and $\ta$:

\begin{remark}
 For any $C > 0$, there exists a graph with $\ta \leq 2$ and $\tc \geq C$. 
\end{remark}
\begin{proof}
    It is pointed out in \cite{dms2} that, in any tree decomposition $(T, \varphi)$ of a graph $G$, there exists a vertex $v \in G$ and a node $t \in T$ with $N[v] \subseteq \varphi(t)$; in particular, if we start with a graph $G$ and construct $G^+$ from two copies of $G$ by adding all possible edges between them, for any tree decomposition of $G^+$, some bag will contain a copy of $G$. We can use this to construct an example as claimed. Start with any triangle-free graph $G$ with chromatic number greater than $C$ (such graphs are well-known to exist); take its complement $\overline{G}$, and construct the graph $\overline{G}^+$ as above. The resulting graph will have $\alpha(\overline{G}^+) \leq 2$ (since its complement is still triangle-free), but by the above discussion, some bag in any tree decomposition of $\overline{G}^+$ will have clique cover number more than $C$.
\end{proof}

Finally, we also have the following simple observation relating $\tc$ to treewidth:

\begin{remark}
\label{rem:tw-tc} 
For every graph $G$, $\tc(G)\leq \tw (G)+1$. 
\end{remark}

Throughout the paper, we will work with $\tc$ rather than $\ta$, with the understanding that our results can be stated in terms of $\ta$ via Lemma~\ref{lem:ta-vs-tc}.

\subsection{Treewidth, $\tc$, clique cutsets and (3PC, wheel)-free graphs}

 We next state the observations that clique cutsets do not affect treewidth, and $\tc$. We note that \cite{dms2} includes an analogous result for $\ta$. In view of the treewidth and $\ta$ results, the proof of the $\tc$ result is routine and left to the reader.
  
\begin{lemma}
\label{lemma:clique-cutsets-tw} 
{\em (\cite{clique-cutsets-tw})}
Let $G$ be a graph. Then $\tw(G)$ is equal to the maximum treewidth over all induced subgraphs of $G$ with no clique cutset. 
\end{lemma}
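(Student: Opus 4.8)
The plan is to prove the two inequalities separately, with the nontrivial content concentrated in a single ``gluing'' step at a clique cutset, followed by an induction on $|V(G)|$.

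First I would dispose of the easy direction. Treewidth is monotone under taking induced subgraphs: restricting any tree decomposition $(T,\varphi)$ of $G$ to the bags $\varphi(t)\cap V(H)$ yields a tree decomposition of $H$ of no larger width. Hence every induced subgraph $H$ satisfies $\tw(H)\le \tw(G)$, and in particular the maximum of $\tw(H)$ over induced subgraphs $H$ with no clique cutset is at most $\tw(G)$.

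For the reverse inequality, the crux is the following claim about a single clique cutset. Suppose $C$ is a clique cutset of $G$, let $A_1,\dots,A_k$ (with $k\ge 2$) be the vertex sets of the components of $G\setminus C$, and set $G_i=G[A_i\cup C]$; each $G_i$ is a \emph{proper} induced subgraph of $G$. I claim $\tw(G)=\max_i \tw(G_i)$. The inequality $\ge$ is again monotonicity. For $\le$, take optimal tree decompositions $(T_i,\varphi_i)$ of the $G_i$. Here I would invoke the standard fact (a Helly-property argument for subtrees of a tree) that the clique $C$, being a clique in each $G_i$, lies inside a single bag $\varphi_i(t_i)$ for some node $t_i\in V(T_i)$. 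I then glue the $T_i$ together by introducing a new node $r$ with $\varphi(r)=C$ adjacent to each $t_i$. Verifying that this is a tree decomposition of $G$ is routine: every vertex and every edge of $G$ lies in some $G_i$ (no edges run between distinct $A_i$ since $C$ separates them), and for each $c\in C$ the nodes whose bags contain $c$ form a subtree of each $T_i$ meeting $t_i$, so together with $r$ they stay connected. The width does not increase because $|C|\le \omega(G_i)\le \tw(G_i)+1$, so the new bag $C$ is no larger than the largest bag already present.

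With the claim in hand, I would finish by induction on $|V(G)|$. Writing $f(G)$ for the maximum of $\tw(H)$ over induced subgraphs $H$ of $G$ with no clique cutset: if $G$ has no clique cutset then $G$ itself is admissible and $\tw(G)=f(G)$; otherwise, applying the claim to a clique cutset gives $\tw(G)=\max_i \tw(G_i)$, and since every induced subgraph of $G_i$ is an induced subgraph of $G$ we have $f(G_i)\le f(G)$, whence by the inductive hypothesis $\tw(G)=\max_i \tw(G_i)\le \max_i f(G_i)\le f(G)$. I expect the main obstacle to be the gluing step, and specifically the bookkeeping for the connectivity axiom: one must ensure the chosen bags $\varphi_i(t_i)$ simultaneously contain all of $C$ (not merely parts of it), which is exactly what the clique-in-a-single-bag fact guarantees, and that attaching them to the common node $r$ with bag $C$ keeps the set of bags containing each fixed $c\in C$ connected. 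Everything else---monotonicity, the width bound via $\omega(G_i)\le \tw(G_i)+1$, and the induction---is bookkeeping.
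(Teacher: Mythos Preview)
Your proof is correct and is the standard argument for this well-known fact. Note, however, that the paper does not actually supply its own proof of this lemma: it is stated with a citation to Bodlaender and Koster and used as a black box. Your gluing-at-a-clique-cutset argument (using that every clique of $G_i$ lies in a single bag, attaching a new node with bag $C$, and inducting on $|V(G)|$) is exactly the standard proof one would find in the cited reference, so there is nothing to compare beyond observing that you have correctly reconstructed it.
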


\begin{lemma}
\label{lemma:clique-cutsets-tc} 
Let $G$ be a graph. Then $\tc(G)$ is equal to the maximum $\tc$ over all induced subgraphs of $G$ with no clique cutset. 
\end{lemma}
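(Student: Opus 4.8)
The plan is to mirror the standard proof that clique cutsets do not increase treewidth, adapting it to the clique cover number of bags. The statement has two inequalities. One direction is trivial: since $\tc$ is monotone under taking induced subgraphs (any tree decomposition of $G$ restricts to one of any induced subgraph $H$ by intersecting each bag with $V(H)$, which can only decrease the clique cover number of each bag), the maximum of $\tc$ over induced subgraphs with no clique cutset is at most $\tc(G)$. So the content is the reverse inequality: if every induced subgraph of $G$ with no clique cutset has $\tc$ at most $k$, then $\tc(G) \leq k$.

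For the reverse direction, I would proceed by induction on $|V(G)|$. If $G$ has no clique cutset, the statement is immediate. Otherwise, let $C$ be a clique cutset of $G$, and let $G \setminus C$ have components with vertex sets $D_1, \dots, D_r$; set $G_i = G[D_i \cup C]$. Each $G_i$ is a proper induced subgraph of $G$, so by induction each has a tree decomposition $(T_i, \varphi_i)$ with clique cover number at most $k$. The key structural step, familiar from the treewidth argument, is that in each $(T_i, \varphi_i)$ we can find a node $t_i$ whose bag contains all of $C$: since $C$ is a clique, its vertices pairwise share a bag, and by the Helly property of subtrees of a tree, the subtrees $\{t : v \in \varphi_i(t)\}$ for $v \in C$ have a common node $t_i$ with $C \subseteq \varphi_i(t_i)$. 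I would then glue the decompositions by introducing a new node $t^*$ with bag $\varphi(t^*) = C$, and joining $t^*$ to each $t_i$ by an edge, taking $T$ to be the disjoint union of the $T_i$ together with $t^*$ and these new edges.

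It then remains to verify that $(T, \varphi)$ is a valid tree decomposition of $G$ and that its clique cover number is at most $k$. Validity of conditions (i) and (ii) is routine, since every vertex and edge of $G$ lies in some $G_i$. The connectivity condition (iii) is where care is needed: for a vertex $v \in D_i$, its bags lie entirely within $T_i$ and form a connected subtree there; for a vertex $v \in C$, its bags in each $T_i$ form a subtree containing $t_i$, and attaching all the $t_i$ to the common node $t^*$ (whose bag contains $v$) keeps the union connected. For the clique cover bound, each bag is either one of the original $\varphi_i(t)$, which has clique cover number at most $k$ by the inductive hypothesis, or the new bag $C$, which is a single clique and so has clique cover number $1 \leq k$ (assuming $k \geq 1$, which holds unless $G$ is edgeless).

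The main obstacle, and the only place requiring genuine attention rather than bookkeeping, is ensuring that the common node $t_i$ containing all of $C$ exists and that the gluing preserves condition (iii); these are exactly the two points where the clique structure of the cutset is used. Everything else is a direct transcription of the treewidth proof with $\cchi$ of bags in place of bag size, which is precisely why the authors call it routine. I would keep the presentation brief, emphasizing the Helly-type argument for $t_i$ and the connectivity check, and relegate the straightforward verifications of (i) and (ii) to a remark.
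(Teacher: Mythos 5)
Your proof is correct and is exactly the ``routine'' adaptation of the standard clique-cutset/treewidth argument that the paper alludes to when it leaves this lemma to the reader: monotonicity of $\tc$ under induced subgraphs for one direction, and induction with the Helly-property gluing at a bag containing the clique cutset for the other. The only nitpick is that your parenthetical about $k\geq 1$ is unnecessary (a single vertex is always an induced subgraph with no clique cutset, so $k\geq 1$ whenever $G$ is nonempty, and an empty bag has clique cover number $0$ anyway); otherwise nothing is missing.
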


In view of this, given a class $\mathcal{X}$, we will write $\mathcal{X}_{\text{nc}}$ for the class of graphs in $\mathcal X$ with no clique cutset.

\medskip

We continue with a few observations about (3PC, wheel)-free graphs.

\begin{lemma}
\label{lemma:us} 
{\em (\cite{cckv-us})}
If $G$ is (3PC, wheel)-free, then either $G$ has a clique cutset or $G$ is a complete graph or a hole.
\end{lemma}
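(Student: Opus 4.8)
The plan is to show that a graph $G$ in this class with no clique cutset that is not complete must be a hole, by locating a hole $H$ and proving $V(G)=V(H)$. First I would dispose of trivialities: a graph with no clique cutset is connected (otherwise the empty clique is a cutset), so I may assume $G$ is connected and not complete. Since a connected chordal graph that is not complete has a clique cutset, $G$ must contain a hole; fix one and call it $H$. It then suffices to prove that every vertex of $G$ lies on $H$, because $H$ is induced, so $V(G)=V(H)$ forces $G=H$.

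The reduction I would set up is as follows. Suppose $V(G)\neq V(H)$, and let $F$ be a connected component of $G\setminus V(H)$. Since $F$ is a component of $G\setminus V(H)$, its entire neighborhood lies on $H$, that is, $N_G(F)=N(F)\cap V(H)=:N$; moreover $|V(H)|\geq 4$. The key observation is that if $N$ is a clique, then $N$ is a clique cutset separating $F$ from the nonempty set $V(H)\setminus N$, contradicting our hypothesis. Hence it is enough to prove the following \emph{Claim}: $N$ is always a clique; equivalently, $F$ is not adjacent to two non-adjacent vertices of $H$. Two ingredients feed into this. The first is a local attachment lemma: any vertex $v\notin V(H)$ has at most two neighbors on $H$, since three neighbors would make $(H,v)$ a wheel; and if $v$ has exactly two neighbors $a,b$ on $H$, then $a,b$ are adjacent, for otherwise the two arcs of $H$ between $a$ and $b$, together with the path $a\dd v\dd b$, would form a theta. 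In particular, no single vertex outside $H$ attaches to two non-adjacent vertices of $H$.

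To finish the Claim, suppose for contradiction that $F$ is adjacent to non-adjacent vertices $a,b\in V(H)$, and choose a shortest path $P=a\dd f_1\dd\cdots\dd f_m\dd b$ with interior $f_1,\dots,f_m\in F$; the local lemma already rules out $m=1$, so $m\geq 2$. Now $a$ and $b$ are joined by three paths — the two arcs $P_1,P_2$ of $H$ and the path $P$ — and I would argue that these always contain a three-path configuration or a wheel. The heart of the argument, and the step I expect to be the main obstacle, is controlling how the interior vertices $f_1,\dots,f_m$ attach to $H$: by the local lemma each $f_i$ has at most two, mutually adjacent, neighbors on $H$, and by the minimality of $P$ these attachments should be confined near $a$ and $b$. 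A careful case analysis on these attachments — the interior anticomplete to $H$ yielding a clean theta (or, in the presence of triangles at the ends, a pyramid or prism), and otherwise reusing wheel- and 3PC-freeness to relocate to a shorter or more local configuration — should produce in every case an induced theta, pyramid, prism, or wheel, the desired contradiction. This establishes the Claim, hence $V(G)=V(H)$ and $G=H$.
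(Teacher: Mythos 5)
The paper does not prove this lemma at all --- it is imported verbatim from \cite{cckv-us} --- so there is no internal argument to compare yours against. Your outline follows the standard route to such a decomposition theorem, and the parts you actually carry out are sound: the reduction to showing that the attachment $N(F)\cap V(H)$ of each component $F$ of $G\setminus V(H)$ is a clique is exactly right (including the check that $V(H)\setminus N(F)\neq\emptyset$, because a clique meets a hole in at most two vertices), and your local attachment lemma (a vertex off $H$ has at most two neighbours on $H$, necessarily adjacent when there are two) is correct under the paper's definitions of theta and wheel.

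The gap is that the entire technical content of the lemma is the case analysis you defer with ``should produce in every case an induced theta, pyramid, prism, or wheel,'' and that analysis is not carried out. Two places where your sketch is too optimistic. First, minimality of $P$ does not confine the attachments of the internal vertices $f_2,\dots,f_{m-1}$ ``near $a$ and $b$'': what it actually gives is that any $H$-neighbour $c$ of such an $f_i$ must lie in $N_H(a)\cap N_H(b)$ (otherwise a subpath of the form $a \dd f_1 \cc f_j \dd c$ or $c \dd f_j \cc f_m \dd b$ is a shorter path through $F$ between a non-adjacent pair of $N(F)\cap V(H)$), i.e.\ $c$ is the midpoint of an arc of $H$ of length two. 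This configuration is not excluded and must be killed separately, e.g.\ by exhibiting $c$ as the centre of a wheel on the hole formed by $P$ and the other arc --- which in turn requires first verifying that this union \emph{is} a hole, i.e.\ re-examining the attachments of $f_1$ and $f_m$ to that arc. Second, even in the clean case where the interior of $P$ is anticomplete to $H$, the trichotomy theta/pyramid/prism depends on whether $f_1$ and $f_m$ have one or two neighbours on $H$ and on which arcs the extra neighbours $a'$, $b'$ lie, and it degenerates when $a'=b'$ (possible only when an arc has length two), where the prism collapses and one must instead find a wheel centred at $a'=b'$. None of these steps is hard, but until they are written down the proof is an announcement rather than an argument; the complete analysis is in \cite{cckv-us}.
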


\begin{lemma}
\label{lemma:us-tw} 
If $G$ is (3PC, wheel)-free, then $\tw (G)\leq \max\{ \omega(G)-1,2\}$ and  $\tc (G)\leq 2$.
In particular, if $G$ is (theta, wheel, $K_3$)-free, then  $tw(G)\leq 2$.
\end{lemma}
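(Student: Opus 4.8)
The plan is to derive Lemma~\ref{lemma:us-tw} directly from the structural dichotomy of Lemma~\ref{lemma:us} by combining it with the clique-cutset reductions of Lemma~\ref{lemma:clique-cutsets-tw} and Lemma~\ref{lemma:clique-cutsets-tc}. First I would reduce to the clique-cutset-free case: by Lemma~\ref{lemma:clique-cutsets-tw}, $\tw(G)$ equals the maximum of $\tw(H)$ over induced subgraphs $H$ of $G$ with no clique cutset, and by Lemma~\ref{lemma:clique-cutsets-tc} the analogous statement holds for $\tc$. Since being (3PC, wheel)-free is preserved under taking induced subgraphs, every such $H$ is again (3PC, wheel)-free and clique-cutset-free, so by Lemma~\ref{lemma:us} each $H$ is either a complete graph or a hole. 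It therefore suffices to check the two claimed bounds on these two base cases.

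Next I would handle the base cases. If $H = K_n$ is complete, then the trivial single-bag tree decomposition gives $\tw(H) = n - 1 = \omega(H) - 1$ and $\tc(H) = 1$ (a single clique covers everything), both within the asserted bounds. If $H$ is a hole, then it is well known that $\tw(H) = 2$; for the clique cover number of any bag we observe that a hole is $C_4$-free and triangle-free, so one can exhibit a tree decomposition (e.g.\ the standard "caterpillar" decomposition of a cycle whose bags are consecutive triples of vertices along the hole) in which every bag induces a path on at most three vertices, which has clique cover number $2$. This gives $\tc(H) \leq 2$. Taking the maximum over both cases yields $\tw(G) \leq \max\{\omega(G) - 1, 2\}$ and $\tc(G) \leq 2$, since $\omega(H) \leq \omega(G)$ for induced subgraphs.

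For the final "in particular" clause, note that if $G$ is (theta, wheel, $K_3$)-free then in particular it is (3PC, wheel)-free, because theta-, prism-, and pyramid-freeness all follow: a pyramid and a prism each contain a triangle, so any $K_3$-free graph is automatically (pyramid, prism)-free, and thetas are forbidden directly. Hence the first part applies. Moreover $K_3$-freeness forces $\omega(G) \leq 2$, so $\max\{\omega(G) - 1, 2\} = 2$, giving $\tw(G) \leq 2$ as claimed.

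The only mild subtlety, rather than a genuine obstacle, is exhibiting a concrete tree decomposition of a hole that simultaneously certifies $\tw \leq 2$ and $\tc \leq 2$; but this is routine, since the consecutive-triples decomposition of a cycle does both at once. Everything else is a direct assembly of the cited lemmas, and the proof is short.
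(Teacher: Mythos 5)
Your proof is correct and follows essentially the same route as the paper, which simply cites Lemma~\ref{lemma:clique-cutsets-tw}, Lemma~\ref{lemma:clique-cutsets-tc} and Lemma~\ref{lemma:us} and leaves the base cases (complete graph, hole) implicit. One trivial quibble: literal ``consecutive triples'' along a cycle do not form a valid tree decomposition because of the wrap-around edge; use the standard fan decomposition with bags $\{v_1, v_i, v_{i+1}\}$, each of which contains an edge and hence has clique cover number at most $2$.
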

\begin{proof}
The first statement follows from Lemma \ref{lemma:clique-cutsets-tw},  Lemma \ref{lemma:clique-cutsets-tc} and Lemma \ref{lemma:us}, and the second from the fact
that (theta, wheel, $K_3$)-free graphs are a subclass of (3PC, wheel)-free graphs.
A different proof of the second statement is given in \cite{tw4}.
\end{proof}

\subsection{Balanced separators} 
Let $G$ be a graph. A {\em weight function on $G$} is a function $w:V(G) \to \mathbb{R}$. For $X \subseteq V(G)$, we let $w(X) = \sum_{x \in X} w(x)$. 
Let $G$ be a graph, let $w: V(G) \to [0, 1]$ be a weight function on $G$ with $w(G) = 1$, and let $c \in [\frac{1}{2}, 1)$. A set $X \subseteq V(G)$ is a  {\em $(w, c)$-balanced separator} if $w(D) \leq c$ for every component $D$ of $G \setminus X$. The next few lemmas state how $(w, c)$-balanced separators relate to $\tc$. They are modelled on analogous results concerning treewidth, which we also include.

\begin{lemma}[\cite{wallpaper, params-tied-to-tw}, see also \cite{reed}]\label{lemma:bs-to-tw}
Let $G$ be a graph, let $c \in [\frac{1}{2}, 1)$, and let $k$ be a positive integer. If, for every weight function $w: V(G) \to [0, 1]$ with $w(G) = 1$, the graph $G$ has a $(w, c)$-balanced separator of size at most $k$, then $\tw(G) \leq \frac{1}{1-c}k$. 
\end{lemma}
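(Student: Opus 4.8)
The plan is to construct a tree decomposition of $G$ of the desired width by a recursive, divide-and-conquer application of the balanced-separator hypothesis. I would set $t := \frac{1}{1-c}k$ and build the decomposition top-down, maintaining at each recursive call an induced subgraph $H$ of $G$ together with an \emph{interface} set $W \subseteq V(H)$ --- the vertices of $H$ shared with the portion of $G$ already decomposed --- subject to a size invariant $|W| \le \lambda$ for a threshold $\lambda$ to be chosen. The root call is $(G, \emptyset)$. At a call $(H, W)$ I would apply the hypothesis to $H$ with the weight function that spreads total weight $1$ over $W$ (handling $W = \emptyset$ separately), obtaining a $(w,c)$-balanced separator $X$ of $H$ with $|X| \le k$. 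The bag created for this call is $W \cup X$, and I would recurse on $(H[N_H[D]],\, W_D)$ for each component $D$ of $H \setminus X$, where $W_D := (W \cap D) \cup (N_H(D) \cap X)$ is the new interface.

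Checking that the resulting $(T, \varphi)$ is a tree decomposition is routine: conditions (i) and (ii) hold because every vertex and edge of $H$ lies in $X$ or inside some $N_H[D]$, and condition (iii) follows because the only vertices shared between $W \cup X$ and a child subgraph $H[N_H[D]]$ are exactly those of $W_D$, which by construction lie in the child's root bag. The width is where the arithmetic enters: balancedness gives $|W \cap D| \le c|W| \le c\lambda$, so $|W_D| \le c\lambda + k$, and preserving the invariant forces $c\lambda + k \le \lambda$, i.e.\ $\lambda \ge \frac{k}{1-c}$, while each bag $W \cup X$ has size at most $\lambda + k$. Since $c \ge \tfrac12$ these constraints are compatible. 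One must also ensure the recursion terminates in strictly smaller instances; the subtlety is that a separator balancing only the interface need not genuinely split $H$, so the weight function should also reflect the bulk of $H$ to guarantee progress.

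The main obstacle is extracting the \emph{exact} constant $\frac{1}{1-c}k$ rather than a weaker multiple of $k$. The naive bookkeeping above --- carrying the whole separator into each child interface while shrinking the inherited interface only by a factor $c$ --- pushes the threshold up to $\lambda = \frac{k}{1-c}$ and hence bags of size up to $\frac{k}{1-c} + k$, which overshoots. To hit the stated bound one must be more economical, for instance by a more careful charging scheme, or by folding the old interface into the new separator so that no single component inherits both a large slice of $W$ and all of $X$; verifying that $c \ge \tfrac12$ is exactly what makes the induction close is the delicate point, the rest being standard. Alternatively, one could argue the contrapositive through the duality between treewidth and brambles (or havens): a graph with $\tw(G) > \frac{1}{1-c}k$ carries a haven of large order, from which one reads off a weight function admitting no $(w,c)$-balanced separator of size at most $k$.
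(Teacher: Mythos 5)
A point of context first: the paper does not actually prove this lemma --- it is imported wholesale from \cite{wallpaper}, \cite{params-tied-to-tw} and \cite{reed} --- and the only argument of this type written out in the paper is the proof of the clique-cover analogue, Lemma~\ref{lemma:bs-to-ta}. That proof is precisely your recursive scheme: an interface set $W$ carried through the recursion, a weight function supported on (a stable subset of) $W$, a bag $W \cup X$, and recursion on each $D \cup X$ with new interface $(W \cap D) \cup X$; the paper makes no attempt at a sharp constant there (it settles for ${4k + 1 \choose 2} + k$), and it handles termination by arranging that each $D \cup X$ omits some of the weighted vertices --- the same repair you suggest for your termination worry. So in spirit your proposal matches the paper's treatment of this family of statements, and it does establish a correct bound of the form $\tw(G) \leq \frac{k}{1-c} + k - 1$.

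The gap is the one you flag yourself: the stated constant $\frac{1}{1-c}k$ is not derived, and neither of your two proposed repairs is carried out. I do not believe the first (reorganising the charging inside the recursion) can be made to work: the bag at each node must contain the full interface, to glue to the parent, \emph{and} the full separator, to prevent a vertex of $X \setminus W$ from appearing in several child subtrees, and these two contributions are exactly what add up to $\frac{k}{1-c} + k$. The sharp constant comes from your second alternative, and the missing ingredient is concrete: take a bramble of maximum order $\tau = \tw(G) + 1$ and let $w$ be uniform on a minimum hitting set $T$ of it. For any $X$ with $|X| \leq k$, all bramble elements avoiding $X$ lie in a single component $\beta(X)$ of $G \setminus X$, and $(T \cap \beta(X)) \cup X$ is again a hitting set, so $w(\beta(X)) \geq 1 - k/\tau$; if $\tw(G) > \frac{k}{1-c}$ this exceeds $c$, so no $(w,c)$-balanced separator of size at most $k$ exists. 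That one-paragraph duality argument is the actual content of the lemma and is absent from the proposal. For what it is worth, the defect only concerns the statement as literally written: the lemma is included in the paper purely as a model for Lemma~\ref{lemma:bs-to-ta}, and your weaker constant would serve every downstream purpose identically.
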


\begin{lemma}\label{lemma:bs-to-ta}
 Let $G$ be a $C_4$-free graph, let $c \in [\frac{1}{2}, 1)$, and let $k$ be a positive integer. Suppose that for every weight function $w: V(G) \to [0, 1]$ with $w(G) = 1$, $G$ has a $(w, c)$-balanced separator with clique cover number at most $k$. Then there exists a number $g(k, c)$ such that $\tc(G) \leq g(k, c)$. 
\end{lemma}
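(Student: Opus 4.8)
The plan is to mirror the proof of the treewidth analogue Lemma~\ref{lemma:bs-to-tw}, but to track the independence number of the bags rather than their cardinality; the $C_4$-free hypothesis then lets us pass from $\alpha$ to $\cchi$ at the very end. First note that a $(w,c)$-balanced separator $X$ with $\cchi(G[X]) \le k$ satisfies $\alpha(G[X]) \le \cchi(G[X]) \le k$, so it suffices to prove: if for every weight function $w$ the graph $G$ has a $(w,c)$-balanced separator $X$ with $\alpha(G[X]) \le k$, then $\ta(G)$ is bounded by a function of $k$ and $c$. Indeed, once we have a tree decomposition whose bags have independence number at most some $N$, Lemma~\ref{lem:chi-bound} applied to each (necessarily $C_4$-free) bag bounds its clique cover number by $\binom{N+1}{2}$; equivalently, we may simply invoke Lemma~\ref{lem:ta-vs-tc} to conclude $\tc(G) \le \binom{\ta(G)+1}{2} =: g(k)$.

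We build the decomposition by the usual top-down recursion on \emph{pieces}. A piece is a pair $(C, W)$ where $W \subseteq V(G)$, $C$ is a component of $G \setminus W$, and $W = N(C)$; we produce a tree decomposition of $G[C \cup W]$ whose root bag contains $W$, and glue these along the shared boundaries. Starting from the pieces $(C, \emptyset)$ given by the components $C$ of $G$, at each piece we let $I$ be a maximum independent set of $G[W]$ and apply the hypothesis to the weight function $w$ splitting its mass equally between the uniform distributions on $C$ and on $I$. A single such separator need not be balanced enough, so we first run a standard balance-amplification step (iterating the hypothesis on the unique heaviest component), which produces, for any prescribed balance constant, a separator whose independence number is at most $k$ times a constant $t = t(c)$; amplifying to balance $1/3$ and using the equal split guarantees that every component $D$ of $G \setminus X$ satisfies both $|D \cap C| \le \tfrac23|C|$ and $|D \cap I| \le \tfrac23|I|$. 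The first bound forces the interior to shrink strictly, so the recursion terminates; the new root bag is $W \cup (X \cap C)$, and the children are the pieces $(C_i, N(C_i))$ arising from the components $C_i$ of $G[C \setminus X]$.

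The heart of the argument, and the main obstacle, is keeping the independence number of the boundaries from accumulating down the recursion: a bag is $W \cup (X \cap C)$, so unless $\alpha(G[W])$ stays bounded the bags blow up. Here we use a maximum-independent-set exchange argument. For a child $C_i$, lying in some component $D$ of $G \setminus X$, we have $N(C_i) \subseteq X \cup (W \cap D)$; and for any independent set $J \subseteq W \cap D$, the set $(I \setminus (D \cup X)) \cup J$ is an independent subset of $W$ (its two parts lie in distinct components of $G \setminus X$), so maximality of $I$ gives $|J| \le |I \cap D| + |I \cap X| \le \tfrac23\alpha(G[W]) + kt$. Hence $\alpha(G[N(C_i)]) \le \alpha(G[X]) + \alpha(G[W \cap D]) \le \tfrac23\alpha(G[W]) + 2kt$. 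Maintaining the invariant $\alpha(G[W]) \le \beta$ therefore only requires $\tfrac23\beta + 2kt \le \beta$, i.e.\ $\beta \ge 6kt$; with this choice every bag has independence number at most $\beta + kt \le 7kt$. This bounds $\ta(G)$, and the reduction above yields $\tc(G) \le g(k)$.

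I expect the main difficulty to be precisely this simultaneous control: a balanced separator here can be an arbitrarily large clique, so the cardinality bookkeeping of the treewidth proof is unavailable, and one must instead arrange that a single separator both cuts the interior (for termination) and splits a maximum independent set of the boundary (for the exchange argument). Reconciling these two demands is what forces both the combined interior/boundary weighting and the preliminary balance-amplification, after which the exchange argument closes the recursion.
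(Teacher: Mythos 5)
Your proof is correct, and while it sits in the same Reed-style framework as the paper's argument (a top-down recursion in which the balanced-separator hypothesis is applied to a weight function concentrated on a stable subset of the current boundary), the key step is genuinely different. The paper works with clique covers throughout: it recurses on pairs $(G,W)$ with $\cchi(W)\le\binom{4k+1}{2}$, places all the weight uniformly on a stable subset $W'\subseteq W$ of size exactly $4k$, and observes that each component $D$ of $G\setminus X$ together with $X$ misses at least $k$ vertices of $W'$, hence at least $k$ cliques of the cover of $W$ miss $W\cap D$ and $\cchi((W\cap D)\cup X)\le\cchi(W)$; termination comes from $D\cup X$ being a proper induced subgraph, and no balance amplification is needed. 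You instead prove the intermediate statement that balanced separators of bounded \emph{independence number} force bounded $\ta$ --- a statement valid for all graphs, with $C_4$-freeness (Wagon's theorem, via Lemma~\ref{lem:ta-vs-tc}) invoked only in the final conversion to $\tc$ --- and your key step is the maximum-independent-set exchange bounding $\alpha(W\cap D)$ by $\tfrac23\alpha(W)+kt$, which is what forces the split weighting between interior and boundary and the amplification below balance $\tfrac12$ (so that the interior shrinks geometrically and the recursion terminates). The paper's route yields a cleaner explicit bound, $g(k)=\binom{4k+1}{2}+k$, and avoids amplification; yours isolates a more general, reusable $\ta$-statement. One cosmetic point: after a round of amplification there may be up to two components of weight exceeding $\tfrac13$, not a unique heaviest one, but the standard iteration (splitting every heavy component) still terminates in a number of rounds depending only on $c$, so your constant $t(c)$ exists as claimed.
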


\begin{proof}
    We adapt the proof from \cite{reed}. 
    Put $h(k, c) := \lceil 2k/(1 - c)\rceil$, and $g(k, c) := {h(k, c) + 1 \choose 2} + k$. We claim that $g(k, c)$ satisfies the statement of the lemma. In fact, we will recursively show the following slightly stronger statement:
    
    \begin{itemize}
        \item[(*)] Let $(G, W)$ be a pair where $G$ is as in the lemma, and $W \subseteq V(G)$ has $\cchi(W) \leq {h(k, c) + 1 \choose 2}$. Then there exists a tree decomposition $(T, \varphi)$ of $G$ of clique cover number at most $g(k, c)$, and such that $W \subseteq \varphi(t)$ for some $t \in V(T)$. In particular, $\tc(G) \leq g(k, c)$. 
    \end{itemize}

    The statement (*) is clear if $\alpha(G) \leq h(k, c)$. Indeed, in that case, Lemma~\ref{lem:chi-bound} yields that a tree decomposition consisting of a single bag does the job.

    If $\alpha(G) > h(k, c)$, we may assume without loss of generality that $\alpha(W) \geq h(k, c)$ (otherwise, we simply add vertices to it arbitrarily until $\alpha(W) = h(k, c)$, and by Lemma~\ref{lem:chi-bound}, we do not lose the property that $\cchi(W) \leq {h(k, c) + 1 \choose 2}$ in doing so). We then select a stable subset $W'$ of $W$ with $|W'| = h(k, c)$, and define a weight function $w : V(G) \to [0, 1]$ by putting $w(x) := \frac{1}{h(k, c)}$ for each $x \in W'$, and $w(x) := 0$ for all other $x$. By assumption, $G$ has a $(w, c)$-balanced separator $X$ with $\cchi(X) \leq k$.

    Now for any component $D$ of $G \setminus X$, we have $w(D) \leq c$, and so there are at least $(1-c)h(k, c) \geq 2k$ vertices of $W'$ outside of $D$. In particular, since $\alpha(X) \leq \overline{\chi}(X) \leq k$, there are $k$ vertices of $W'$, say $x_1, \dots, x_k$, which do not belong to $D \cup X$. 

    Let us now pick a clique cover of $W$ with at most ${h(k, c) + 1 \choose 2}$ cliques, and consider the set $Y := W \cap D$. By the above, the (distinct) cliques containing $x_1, \dots, x_k$ do not intersect $Y$, so $\cchi(Y) \leq {h(k, c) + 1 \choose 2} - k$. This shows $\cchi(Y \cup X) \leq \cchi(Y) + \cchi(X) \leq {h(k, c) + 1 \choose 2}$, and we may recursively find a tree decomposition of $D \cup X$ by applying (*) to the pair $(D \cup X, Y \cup X)$ (noting that $D \cup X$ is a smaller graph).

    We do the above for each component $D_i$ of $G \setminus X$, defining $Y_i$ analogously to $Y$; we obtain, for each $i$, a tree decomposition $(T_i, \varphi_i)$ of $D_i \cup X$ with a node $t_i \in T_i$ such that $Y_i \cup X \subseteq \varphi_i(t_i)$. We then assemble those tree decompositions into a decomposition $(T, \varphi)$ of $G$ with the desired property by starting with the union of the $T_i$'s and adding single vertex $t$ adjacent to all $t_i$'s, and with $\varphi(t) = W \cup X$ (and with $\varphi$ restricting to each $\varphi_i$ on the corresponding $T_i$). Note that $\cchi(G[\varphi(t)]) = \cchi(W \cup X) \leq \cchi(W) + \cchi(X) \leq g(k, c)$. Inductively, this is also true of every other bag. This yields a tree decomposition of $G$ (see Fact~2.8 from page 111 of \cite{reed}).
    
\end{proof}

In the converse direction, we have the following:

\begin{lemma}[\cite{cygan}]
\label{lemma:tw-to-weighted-separator}
Let $G$ be a graph and let $k$ be a positive integer. If $\tw(G) \leq k$, then for every weight function $w:V(G) \to [0, 1]$ with $w(G) = 1$ and every $c \in [\frac{1}{2}, 1)$, $G$ has a $(w, c)$-balanced separator of size at most $k + 1$.
\end{lemma}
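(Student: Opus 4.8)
The plan is to use a width-$k$ tree decomposition $(T,\varphi)$ of $G$ directly and to exhibit one of its bags as the desired balanced separator; this is the classical ``centroid bag'' argument. Since $\tw(G) \le k$ means every bag satisfies $|\varphi(t)| \le k+1$, it suffices to locate a single node $t^\ast \in V(T)$ such that every component of $G \setminus \varphi(t^\ast)$ has weight at most $c$, and then take $X = \varphi(t^\ast)$.

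First I would record the structural facts about removing a bag. For a node $t$ and a neighbour $t'$ of $t$ in $T$, let $T_{t'}$ be the component of $T - t$ containing $t'$, and set $C_{t'} := \left(\bigcup_{s \in V(T_{t'})} \varphi(s)\right) \setminus \varphi(t)$. Using property (iii) of tree decompositions (connectivity of the set of bags containing a fixed vertex), I would verify that, as $t'$ ranges over the neighbours of $t$, the sets $C_{t'}$ partition $V(G) \setminus \varphi(t)$ and are pairwise anticomplete: any vertex shared between two sides, or any edge crossing between them, would force a common bag on the path through $t$, hence membership in $\varphi(t)$. Consequently every component of $G \setminus \varphi(t)$ is contained in a single $C_{t'}$, so it is enough to find $t^\ast$ with $w(C_{t'}) \le c$ for every neighbour $t'$ of $t^\ast$.

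Next I would set up an orientation (centroid) argument to produce such a $t^\ast$. Fix an edge $e = tt'$ of $T$, and let $C_t$ be the set defined symmetrically, i.e.\ the weight strictly on the $t$ side after deleting $\varphi(t')$. By the connectivity axiom again, $C_{t'}$ and $C_t$ are disjoint, so $w(C_{t'}) + w(C_t) \le w(G) = 1$; since $c \ge \tfrac12$, at most one of them can exceed $c$. Call $e$ \emph{heavy towards $t'$} if $w(C_{t'}) > c$, and orient it that way. The same disjointness shows each node has at most one outgoing heavy edge, since two such edges would yield two disjoint sets of weight $> c \ge \tfrac12$, summing to more than $1$. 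As $T$ is a tree, this partial orientation has no directed cycle, so following outgoing heavy edges traces a non-repeating path that terminates at a node $t^\ast$ with no outgoing heavy edge; equivalently $w(C_{t'}) \le c$ for every neighbour $t'$ of $t^\ast$. Taking $X = \varphi(t^\ast)$ then finishes the argument: $|X| \le k+1$, and every component of $G \setminus X$ lies in some $C_{t'}$ and hence has weight at most $c$.

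The individual steps are all elementary; the part demanding the most care is the bookkeeping in the second and third paragraphs — correctly excluding the shared bag $\varphi(t)$ (resp.\ $\varphi(t')$) when defining the side-weights, so that the two sides of an edge are genuinely disjoint, and checking via property (iii) that the $C_{t'}$ both partition the complement of the bag and are pairwise anticomplete. Once these are pinned down, the at-most-one-heavy-edge observation, the termination of the orientation walk, and the size bound are all immediate.
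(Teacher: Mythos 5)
Your argument is correct and complete: the paper itself gives no proof of this lemma, citing \cite{cygan} instead, and your centroid-bag argument (disjointness of the two sides of each tree edge via the connectivity axiom, at most one heavy outgoing edge per node, and termination of the orientation walk in a tree) is exactly the standard proof found in that reference. Nothing to fix.
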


This can be generalized at no extra cost to obtain $\ta$ and $\tc$ versions of the lemma (since the separator in the proof from \cite{cygan} is contained in a single bag of the given tree decomposition). 

\begin{lemma}[see \cite{cygan}]
\label{lem:ta-to-weighted-separator}
 Let $G$ be a graph and let $k$ be a positive integer. If $\ta(G) \leq k$, then for every weight function $w:V(G) \to [0, 1]$ with $w(G) = 1$ and every $c \in [\frac{1}{2}, 1)$, $G$ has a $(w, c)$-balanced separator of independence number at most $k$. The analogous statement for $\tc$ holds as well.
\end{lemma}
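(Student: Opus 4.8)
The plan is to reuse, essentially verbatim, the argument underlying Lemma~\ref{lemma:tw-to-weighted-separator}, the key observation being that the balanced separator it produces is a single bag of the given tree decomposition. Concretely, I would fix a tree decomposition $(T,\varphi)$ of $G$ witnessing $\ta(G)\le k$, so that $\alpha(G[\varphi(t)])\le k$ for every $t\in V(T)$, together with the weight function $w$ and the constant $c\in[\tfrac12,1)$. The goal is to locate a single node $t\in V(T)$ for which $\varphi(t)$ is a $(w,c)$-balanced separator; since $\varphi(t)$ is a bag, $\alpha(G[\varphi(t)])\le k$ by the choice of $(T,\varphi)$, giving a separator of independence number at most $k$. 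The $\tc$ version is obtained by starting instead from a decomposition witnessing $\tc(G)\le k$ and reading off $\cchi(G[\varphi(t)])\le k$ for the same bag; nothing else in the argument changes.

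To locate the bag, I would run the standard orientation argument. For each edge $e=t_1t_2$ of $T$, deleting $e$ splits $T$ into two subtrees; let $U_{(t_1,t_2)}$ be the set of vertices of $G$ occurring in some bag on the $t_2$-side but not lying in $\varphi(t_1)$, and define $U_{(t_2,t_1)}$ symmetrically. I orient $e$ towards $t_2$ if $w(U_{(t_1,t_2)})>c$, and towards $t_1$ if the opposite side is heavy. Since $T$ is a finite tree and each edge receives at most one orientation, following the orientations along distinct edges cannot revisit a vertex, so the process terminates at a node $t$ such that no edge incident to $t$ is oriented away from $t$. I claim $\varphi(t)$ is the desired separator: for each neighbour $t'$ of $t$, the edge $tt'$ is not oriented towards $t'$, so $w(U_{(t,t')})\le c$, and every component of $G\setminus\varphi(t)$ is contained in exactly one such $U_{(t,t')}$, whence its weight is at most $c$.

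The only points requiring care are two combinatorial facts, both immediate from the connectivity property~(iii) in the definition of a tree decomposition. First, $U_{(t_1,t_2)}$ and $U_{(t_2,t_1)}$ are disjoint (a vertex occurring on both sides of $e$ would lie in $\varphi(t_1)\cap\varphi(t_2)$, hence in neither set), so since $w(G)=1$ and $c\ge\tfrac12$, at most one side of each edge can be heavy and the orientation is well defined. Second, for the chosen node $t$, the assignment sending each $v\in V(G)\setminus\varphi(t)$ to the unique component of $T\setminus\{t\}$ whose bags contain $v$ is well defined and constant along any edge of $G$ avoiding $\varphi(t)$, so each component of $G\setminus\varphi(t)$ falls entirely inside a single $U_{(t,t')}$. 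I expect this second claim to be the main—though still routine—obstacle, as it is where the tree structure is converted into the separation statement; everything after it is bookkeeping that is genuinely insensitive to whether bags are measured by size, by $\alpha$, or by $\cchi$, which is precisely why the treewidth proof transfers at no extra cost.
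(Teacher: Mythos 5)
Your proposal is correct and follows exactly the route the paper intends: the paper does not write out the argument but simply observes that the separator produced by the standard proof of Lemma~\ref{lemma:tw-to-weighted-separator} from \cite{cygan} is a single bag of the given tree decomposition, so measuring bags by $\alpha$ or $\cchi$ instead of size changes nothing. Your expanded orientation argument, including the two connectivity checks, is the standard proof being referenced, so there is nothing to add.
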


\subsection{Separations} 
A {\em separation} of a graph $G$ is a triple $(A, C, B)$, where $A, B, C \subseteq V(G)$, $A \cup C \cup B = V(G)$, $A$, $B$, and $C$ are pairwise disjoint, and $A$ is anticomplete to $B$. If $S = (A, C, B)$ is a separation, we let $A(S) = A$, $B(S) = B$, and $C(S) = C$. Two separations $(A_1, C_1, B_1)$ and $(A_2, C_2, B_2)$ are {\em nearly non-crossing} if every component of $A_1 \cup A_2$ is a component of $A_1$ or a component of $A_2$. A separation $(A, C, B)$ is a {\em star separation} if there exists $v \in C$ such that $C \subseteq N[v]$. Let $S_1 = (A_1, C_1, B_1)$ and $S_2 = (A_2, C_2, B_2)$ be separations of $G$. We say $S_1$ is a {\em shield for $S_2$} if $B_1 \cup C_1 \subseteq B_2 \cup C_2$. 

\medskip

We note the following result from \cite{tw4}:

\begin{lemma}[\cite{tw4}]
\label{lemma:shields}
Let G be a ($C_4$, diamond)-free graph with no clique cutset, let $v_1, v_2 \in V(G)$, and let $S_1 = (A_1, C_1, B_1)$ and $S_2 = (A_2, C_2, B_2)$ be star separations of $G$ such that $v_i \in C_i \subseteq N[v_i]$, $B_i$ is connected, and $N(B_i) = C_i \setminus \{v_i\}$ for $i = 1, 2$. Suppose that $v_2 \in A_1$ and $B_2 \cap (B_1 \cup (C_1 \setminus \{v_1\})) \neq \emptyset$. Then, $S_1$ is a shield for $S_2$. 
\end{lemma}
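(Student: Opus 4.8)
The plan is to prove the statement directly by establishing $B_1 \cup C_1 \subseteq B_2 \cup C_2$, which, since $S_2$ partitions $V(G)$, amounts to showing that no vertex of $B_1 \cup C_1$ lies in $A_2$. I would split $B_1 \cup C_1$ into its three natural pieces, namely $B_1$, the set $C_1 \setminus \{v_1\} = N(B_1)$, and the center $v_1$, and locate each relative to $S_2$. The ingredients used throughout are that $A_i$ is anticomplete to $B_i$, that $B_i$ is connected, and that $C_i \subseteq N[v_i]$ with $N(B_i) = C_i \setminus \{v_i\}$.

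First I would pin down $C_2$ using the hypothesis $v_2 \in A_1$: since $v_2 \in A_1$ is anticomplete to $B_1$, every neighbor of $v_2$ lies in $A_1 \cup C_1$, and as $C_2 \setminus \{v_2\} \subseteq N(v_2)$ we get $C_2 \subseteq A_1 \cup C_1$, hence $C_2 \cap B_1 = \emptyset$. Consequently $B_1 \subseteq A_2 \cup B_2$, and since $B_1$ is connected while $A_2$ is anticomplete to $B_2$, either $B_1 \subseteq A_2$ or $B_1 \subseteq B_2$. The first is impossible: if $B_1 \subseteq A_2$ then $B_1 \cap B_2 = \emptyset$, so the hypothesis $B_2 \cap (B_1 \cup (C_1 \setminus \{v_1\})) \neq \emptyset$ forces some $c \in B_2 \cap (C_1 \setminus \{v_1\}) = B_2 \cap N(B_1)$; but then $c \in B_2$ has a neighbor in $B_1 \subseteq A_2$, contradicting that $A_2$ is anticomplete to $B_2$. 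Hence $B_1 \subseteq B_2$. It then follows at once that $C_1 \setminus \{v_1\} = N(B_1) \subseteq N[B_2] \subseteq B_2 \cup C_2$, since any vertex with a neighbor in $B_2$ cannot lie in $A_2$. At this point only the center $v_1$ remains to be placed.

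The main obstacle, and the only place where the forbidden subgraphs and the no-clique-cutset hypothesis enter, is showing $v_1 \notin A_2$. Assuming $v_1 \in A_2$, it is anticomplete to $B_2$ yet complete to $C_1 \setminus \{v_1\}$, which already lies in $B_2 \cup C_2$; hence $C_1 \setminus \{v_1\} \subseteq C_2 \subseteq N[v_2]$, so (as $v_2 \notin C_1$) both $v_1$ and $v_2$ are complete to $C_1 \setminus \{v_1\}$. I would then argue that $C_1$ is a clique: for distinct $s_1, s_2 \in C_1 \setminus \{v_1\}$ with $s_1 s_2 \notin E(G)$, the four vertices $v_1, v_2, s_1, s_2$ induce a $C_4$ if $v_1 v_2 \notin E(G)$ and a diamond if $v_1 v_2 \in E(G)$; since $G$ is $(C_4, \text{diamond})$-free, $C_1 \setminus \{v_1\}$ must be a clique, and with $v_1$ complete to it, so is $C_1$. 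But $C_1$ separates $A_1$ (nonempty, as it contains $v_2$) from $B_1$ (nonempty, since $B_1 = \emptyset$ would give $N(B_1) = C_1 \setminus \{v_1\} = \emptyset$, contradicting the intersection hypothesis), so $C_1$ is a clique cutset, contrary to the assumption on $G$. Therefore $v_1 \in B_2 \cup C_2$, and combining the three locations yields $B_1 \cup C_1 \subseteq B_2 \cup C_2$, as required. The delicate point is precisely that the twin-like configuration of $v_1$ and $v_2$ across $C_1 \setminus \{v_1\}$ is what the two forbidden subgraphs rule out, turning the separator $C_1$ into a clique cutset.
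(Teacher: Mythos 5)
Your proof is correct and complete: the three-way split of $B_1\cup C_1$, the connectivity argument placing $B_1$ inside $B_2$, and the $C_4$/diamond argument turning $C_1$ into a clique cutset when $v_1\in A_2$ all check out (including the small but necessary observations that $B_1\neq\emptyset$ and $v_2\notin C_1$). The paper itself states this lemma without proof, citing \cite{tw4}, and your argument follows the same natural route used there, so there is nothing further to compare.
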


Let $G$ be a graph and let $w: V(G) \to [0, 1]$ be a weight function on $G$ with $w(G) = 1$. A vertex $v \in V(G)$ is called {\em balanced} if $w(D) \leq \frac{1}{2}$ for every component $D$ of $G \setminus N[v]$, and {\em unbalanced} otherwise. Let $U$ denote the set of unbalanced vertices of $G$. Let $v \in U$. The {\em canonical star separation for $v$}, denoted $S_v = (A_v, C_v, B_v)$, is defined as follows: $B_v$ is the connected component of $G \setminus N[v]$ with largest weight, $C_v = \{v\} \cup \left(N(v) \cap N(B_v)\right)$, and $A_v = V(G) \setminus (B_v \cup C_v)$. Note that $B_v$ is well-defined since $v \in U$. 

Let $\leq_A$ be the relation on $U$ where for $x, y \in U$, $x \leq_A y$ if and only if $x = y$ or $y \in A_x$. 
We also need:

\begin{lemma}[\cite{tw4}]
\label{lemma:leqA-partial-order}
Let $G$ be a ($C_4$, diamond)-free graph with no clique cutset, let $w:V(G) \to [0, 1]$ be a weight function on $G$ with $w(G) = 1$, let $U$ be the set of unbalanced vertices of $G$, and let $\leq_A$ be the relation on $U$ defined above. Then, $\leq_A$ is a partial order. 
\end{lemma}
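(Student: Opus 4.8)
The plan is to verify reflexivity, antisymmetry, and transitivity in turn, with Lemma~\ref{lemma:shields} as the workhorse. Reflexivity is immediate from the definition. Before treating the other two properties I would record two preliminary facts. First, each canonical star separation $S_v = (A_v, C_v, B_v)$ meets the hypotheses of Lemma~\ref{lemma:shields}: by construction $v \in C_v \subseteq N[v]$ and $B_v$ is connected, and since $B_v$ is a component of $G \setminus N[v]$ we have $v \notin N(B_v)$ and $N(B_v) \subseteq N(v)$, so that $N(B_v) = N(v) \cap N(B_v) = C_v \setminus \{v\}$. Second, and crucially, for every $v \in U$ the component $B_v$ satisfies $w(B_v) > \frac{1}{2}$: it is the heaviest component of $G \setminus N[v]$, and $v$ being unbalanced means some component has weight exceeding $\frac{1}{2}$. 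Consequently, for any $x, y \in U$ we have $B_x \cap B_y \neq \emptyset$, since otherwise $w(B_x) + w(B_y) \leq w(G) = 1$ would contradict that both weights exceed $\frac{1}{2}$. This pairwise-intersection fact is precisely what will supply the non-emptiness hypothesis in each application of Lemma~\ref{lemma:shields}.

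For antisymmetry, I would argue by contradiction: suppose $x \leq_A y$ and $y \leq_A x$ with $x \neq y$, so $y \in A_x$ and $x \in A_y$. Applying Lemma~\ref{lemma:shields} with $(S_1, S_2) = (S_x, S_y)$—valid since $y \in A_x$ and $B_y \cap B_x \neq \emptyset$—yields $B_x \cup C_x \subseteq B_y \cup C_y$, while the symmetric application with $(S_1, S_2) = (S_y, S_x)$ yields the reverse inclusion, so $B_x \cup C_x = B_y \cup C_y$. But then $y \in C_y \subseteq B_y \cup C_y = B_x \cup C_x$ contradicts $y \in A_x = V(G) \setminus (B_x \cup C_x)$.

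For transitivity, suppose $x \leq_A y$ and $y \leq_A z$; I may assume $x, y, z$ are pairwise distinct, as the remaining cases reduce to reflexivity and antisymmetry. From $y \in A_x$ and $B_x \cap B_y \neq \emptyset$, Lemma~\ref{lemma:shields} with $(S_1, S_2) = (S_x, S_y)$ gives $B_x \cup C_x \subseteq B_y \cup C_y$. On the other hand, $y \leq_A z$ means $z \in A_y$, i.e. $z \notin B_y \cup C_y$. Combining the inclusion with this shows $z \notin B_x \cup C_x$, that is $z \in A_x$, which is exactly $x \leq_A z$.

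The genuinely substantive ingredient is Lemma~\ref{lemma:shields} itself, which is already available to us; granting that, the only new idea required is the weight-counting observation that the heavy components associated with unbalanced vertices pairwise intersect. I therefore expect no serious obstacle, and the main care needed is purely bookkeeping: checking that the canonical separations satisfy the shield hypotheses and assigning the roles of $S_1$ and $S_2$ in the correct order in each invocation.
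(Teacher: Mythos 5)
Your proof is correct, and it follows essentially the argument of the cited source: the paper itself states this lemma without proof (importing it from \cite{tw4}), and the proof there likewise combines Lemma~\ref{lemma:shields} with the observation that $w(B_u)>\tfrac12$ for every unbalanced $u$, so that the sets $B_x$, $B_y$ pairwise intersect and supply the non-emptiness hypothesis of the shield lemma. The only cosmetic remark is that in the antisymmetry step a single application already suffices, since $B_x\cup C_x\subseteq B_y\cup C_y$ places $x\in C_x$ inside $B_y\cup C_y$, contradicting $x\in A_y$.
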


\subsection{Central bags} 
Let $G$ be a graph and let $w:V(G) \to [0, 1]$ be a weight function on $G$ with $w(G) = 1$. We call a collection $\S$ of separations of $G$ {\em smooth} if the following hold: 
\begin{enumerate}[(i)]
    \item \label{smooth:non-crossing} $S_1$ and $S_2$ are nearly non-crossing for all distinct $S_1, S_2 \in \S$; 
    \item There is a set of unbalanced vertices $v(\S) \subseteq V(G)$ such that there is a bijection $f$ from $v(\S)$ to $\S$ with $v \in C(f(v)) \subseteq N[v]$ and $A(f(v)) \subseteq A_v$ for each $v \in v(\S)$;
    \item \label{smooth:v-cap-A-empty} $v(\S) \cap A(S) = \emptyset$ for all $S \in \S$. 
\end{enumerate} 

Let $\S$ be a smooth collection of separations of $G$. Then, the {\em central bag for $\S$}, denoted $\beta_\S$, is defined as follows: 
$$\beta_\S = \bigcap_{S \in \S} (B(S) \cup C(S)).$$
By property \eqref{smooth:v-cap-A-empty} of smooth collections of separations, it holds that $v(\S) \subseteq \beta_\S$. Also note that $G \setminus \beta_\S = \bigcup_{S \in \S} A(S)$. Moreover, by property \eqref{smooth:non-crossing}, every component $D$ of $G \setminus \beta_{\S}$ is contained in $A(f(v_i))$ for some $i$ (to see this, we consider a maximal subset of $D$ belonging to a single $A(f(v_i))$, and use \eqref{smooth:non-crossing} to extend the subset if it does not equal the whole of $D$, contradicting maximality). 

\medskip

We next define an {\em inherited weight function} $w_\S$ on $\beta_\S$ by recording the weight of each component of $G \setminus \beta_\S$ into a unique vertex $v_i \in \beta_\S$. To this end, we start by fixing an ordering $\{v_1, \hdots, v_k\}$ of $v(\S)$. For every $f(v_i) \in \S$, let $A^*(f(v_i))$ be the union of all connected components $D$ of $\bigcup_{1 \leq j \leq k} A(f(v_j))$ such that $D \subseteq A(f(v_i))$, and $D \not \subseteq A(f(v_j))$ for every $j < i$. In particular, $(A^*(f(v_1)), \hdots, A^*(f(v_k)))$ is a partition of $\bigcup_{S \in \S} A(S)$. For a component $D$ of $\bigcup_{S \in \S} A(S)$, we call the unique $v_i$ with $D \subseteq A^*(f(v_i))$ the {\em anchor} of $D$. In other words, the anchor of $D$ is the first $v_i$ in our ordering with $D \subseteq A(f(v_i))$. We define $w_\S : V(\beta_\S) \to [0, 1]$ by $w_\S(v_i) = w(v_i) + w(A^*(f(v_i)))$ for all $v_i \in v(\S)$, and $w_\S(v) = w(v)$ for all $v \not \in v(\S)$.

 \subsection{Wheels}

We will need the following result from \cite{tw4}.

\begin{lemma}[\cite{tw4}]
\label{lemma:common_nbrs}
Let $G$ be a (theta, even wheel)-free graph, let $H$ be a hole of $G$, and let $v_1, v_2 \in V(G) \setminus V(H)$ be adjacent vertices each with at least two non-adjacent neighbors in $H$. Then, $v_1$ and $v_2$ have a common neighbor in $H$. 
\end{lemma}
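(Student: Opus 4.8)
The plan is to argue by contradiction: suppose $v_1$ and $v_2$ have no common neighbor in $H$, and write $N_i := N(v_i) \cap V(H)$, so that $N_1 \cap N_2 = \emptyset$. First I would record the elementary gadget that powers the whole argument: if $u \notin V(H')$ has \emph{exactly} two neighbors in a hole $H'$ and these are non-adjacent, then $u$ together with the two arcs of $H'$ between its neighbors forms a theta. Indeed, the two arcs have length at least $2$ (the neighbors are non-adjacent), the path $x\dd u\dd y$ through $u$ has length $2$, and all three paths are induced because $u$ has no further neighbor on $H'$; note that a length-$2$ path is a legitimate theta path, since a $C_4$ counts as a hole. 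Applying this gadget with $H'=H$ shows $|N_i|\neq 2$; as each $v_i$ has two non-adjacent neighbors in $H$ we get $|N_i|\geq 3$, so $(H,v_i)$ is a wheel, and even-wheel-freeness forces both $|N_1|$ and $|N_2|$ to be \emph{odd}.

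The core is then a parity count on the sectors of the wheel $(H,v_2)$. Since $N_1\cap N_2=\emptyset$, every vertex of $N_1$ lies in the interior $W^*$ of some sector $W$ of $(H,v_2)$ (the sector endpoints are exactly the vertices of $N_2$). Hence $|N_1|=\sum_{W}|N_1\cap W^*|$, where the sum runs over the $|N_2|$ sectors. Both $|N_1|$ and $|N_2|$ are odd, so the sum of an odd number of terms is odd, and therefore at least one sector $W$ satisfies $|N_1\cap W^*|$ odd.

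Fix such a sector $W$, with endpoints $b_1,b_2\in N_2$, and form the cycle $H^\flat := v_2\dd b_1\dd W^*\dd b_2\dd v_2$. This is an induced hole: $v_2$ has no neighbor in $W^*$ by the definition of a sector, and $H^\flat$ has length $\geq 4$ because $W^*\neq\emptyset$. On $H^\flat$ the vertex $v_1$ is adjacent to $v_2$ (via the edge $v_1v_2$) and to exactly the vertices of $N_1\cap W^*$, hence has $1+|N_1\cap W^*|$ neighbors on $H^\flat$, an even number. If $|N_1\cap W^*|\geq 3$ this is $\geq 4$, so $(H^\flat,v_1)$ is an even wheel, a contradiction. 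If $|N_1\cap W^*|=1$, say $N_1\cap W^*=\{a\}$, then $v_1$ has exactly two neighbors $v_2,a$ on $H^\flat$; they are non-adjacent since $a\in N_1$ and $v_2\notin N_1$, so the gadget applied to $(H^\flat,v_1)$ yields a theta, again a contradiction.

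I expect the main \emph{conceptual} step to be spotting that both forbidden configurations are detected by the single parity invariant $1+|N_1\cap W^*|$, which makes the odd-sector count the crux. The routine-but-delicate step is verifying that $H^\flat$ is genuinely chordless: every potential chord emanates from $v_1$ or $v_2$ into $W$, and these are excluded precisely by $N_1\cap N_2=\emptyset$ together with the defining anticompleteness of a sector. The secondary pitfall to guard against is the length bookkeeping (ensuring $W^*\neq\emptyset$, so that $H^\flat$ is a hole and not a triangle), which is automatic in exactly the two cases $|N_1\cap W^*|\in\{1\}\cup\{3,5,\dots\}$ that the parity count delivers.
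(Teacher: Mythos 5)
Your proof is correct. Note that the paper does not prove this lemma itself --- it is imported verbatim from \cite{tw4} --- so there is no in-paper argument to compare against; your write-up is a valid self-contained proof along the standard lines (the sector-parity argument is essentially how such ``attachment'' lemmas are proved in that series of papers). All the key verifications hold: the two-neighbour theta gadget forces $|N_i|\geq 3$ and hence, by even-wheel-freeness, $|N_i|$ odd; the sector interiors of $(H,v_2)$ partition $H\setminus N_2\supseteq N_1$, so some sector interior meets $N_1$ in an odd (hence nonzero) number of vertices; and the resulting hole $H^\flat$ is genuinely chordless, with $v_1$ having an even number $1+|N_1\cap W^*|$ of neighbours on it, yielding a theta or an even wheel. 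One cosmetic slip: the sentence ``the sum of an odd number of terms is odd'' is not the inference you need --- what you actually use is that an odd sum must have an odd summand, which is what your conclusion states, so nothing is broken.
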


Recall that a {\em wheel} $(H, w)$ is a hole $H$ and a vertex $w \in V(G)$ such that $w$ has at least three neighbors in $H$. A wheel $(H, w)$ is a {\em twin wheel} if $N(w) \cap H$ is a path of length two. A wheel $(H, w)$ is a {\em bug} if $|N(w) \cap H| = 3$ and $w$ has exactly two adjacent neighbors in $H$. Bugs are also known as {\em short pyramids}. 
 A wheel $(H, w)$ is a {\em universal wheel} if $w$ is complete to $H$. We will also use the following result about wheels and star cutsets. 

\begin{lemma}[\cite{tw4}]
\label{lemma:no_wheels}
Let $G$ be a ($C_4$, even wheel, theta, prism)-free graph and let $(H, v)$ be a wheel of $G$ that is not a bug, a twin wheel nor a universal wheel. Let $(A, C, B)$ be a separation of $G$ such that $v \in C \subseteq N[v]$, $B$ is connected, and $N(B) = C \setminus \{v\}$. Then, $H \not \subseteq B \cup C$. 
\end{lemma}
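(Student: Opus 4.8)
The plan is to argue by contradiction: assume $H \subseteq B \cup C$. The first step is to locate the vertices of $H$ relative to the separation. Since $N(B) = C \setminus \{v\}$ does not contain $v$, the hub $v$ has no neighbour in $B$; that is, $v$ is anticomplete to $B$. As $C \setminus \{v\} \subseteq N[v]$, every vertex of $H$ in $C \setminus \{v\}$ is a neighbour of $v$, while every vertex of $H$ in $B$ is a non-neighbour of $v$. Hence the set of spokes $N(v) \cap H$ equals $H \cap (C \setminus \{v\})$, and the interior of every sector of $(H, v)$ lies in $B$. Moreover, each spoke lies in $C \setminus \{v\} = N(B)$, and so has a neighbour in $B$. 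Write $k = |N(v) \cap H| \geq 3$; since $G$ is even-wheel-free, $k$ is odd. I would then split into two cases according to whether $k \geq 5$ or $k = 3$.

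If $k \geq 5$, I would produce an even wheel by \emph{bypassing} a spoke. Fix a spoke $s$ and let $u, u'$ be its two neighbours on $H$ (each is either a spoke or the first interior vertex of an adjacent sector, hence each lies in $B$ or is adjacent to $B$). Using that $B$ is connected and $s \notin B$, route a path $Q$ from $u$ to $u'$ whose interior lies in $B$ and which avoids $s$, and let $H'$ be the cycle obtained from $H$ by replacing the subpath $u \dd s \dd u'$ with $Q$. Because the interior of $Q$ lies in $B$ and $v$ is anticomplete to $B$, the vertex $v$ has no neighbour on the interior of $Q$, so its neighbours on $H'$ are exactly the $k - 1$ spokes other than $s$. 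As $k$ is odd and $k \geq 5$, the number $k - 1$ is even and at least $4$, so $(H', v)$ is an even wheel, contradicting that $G$ is even-wheel-free --- provided $H'$ is an induced hole.

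If $k = 3$, then excluding twin wheels, bugs and universal wheels forces all three sectors to be long, so $H$ has spokes $s_1, s_2, s_3$ and sectors $P_1, P_2, P_3$ with nonempty interiors $P_1^*, P_2^*, P_3^* \subseteq B$, pairwise non-adjacent (being arcs of an induced hole separated by the spokes). Since $P_1^*$ and $P_2^*$ are non-adjacent yet lie in the connected set $B$, there is a path $Q$ from a vertex $x \in P_1^*$ to a vertex $y \in P_2^*$ with interior in $B$. As $x \in B$ and $v$ is anticomplete to $B$, the vertices $x$ and $v$ are non-adjacent, and I would exhibit a theta between them through the three internally disjoint paths $x \dd \cdots \dd s_1 \dd v$ and $x \dd \cdots \dd s_2 \dd v$ (along the two arcs of $P_1$) and $x \dd Q \dd y \dd \cdots \dd s_3 \dd v$ (through $P_2$), contradicting that $G$ is theta-free.

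The main obstacle, in both cases, is the same and is where the bulk of the work lies: controlling how the rerouting path $Q$ attaches to the remainder of $H$, so that the new object is genuinely an induced even wheel (when $k \geq 5$) or an induced theta (when $k = 3$). Taking $Q$ to be a shortest path in $G[B]$ keeps its interior inside $B$, but spokes --- which lie in $C$, not $B$ --- and other sector-interior vertices may still send chords to the interior of $Q$. To deal with these I would analyse the attachments of $B$-vertices to $H$: two adjacent vertices each having non-adjacent neighbours on a hole are constrained by Lemma~\ref{lemma:common_nbrs}, and any surviving unwanted attachment should either be removed by rerouting along $Q$ or directly exhibit a shorter even wheel, a theta, a prism or a $C_4$. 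Executing this attachment analysis, rather than the rerouting idea itself, is the crux of the proof.
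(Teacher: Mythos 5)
First, a point of order: this paper does not actually prove Lemma~\ref{lemma:no_wheels} --- it is imported verbatim from \cite{tw4} --- so there is no in-paper proof to compare your attempt against. Judged on its own, your write-up is a sensible skeleton: the opening reductions (the spokes of $(H,v)$ lie in $C\setminus\{v\}$, the sector interiors lie in $B$, $v$ is anticomplete to $B$, the number $k$ of spokes is odd, and for $k=3$ the exclusion of bugs and twin wheels forces all three sectors to be long) are all correct, and rerouting through the connected set $B$ to manufacture an even wheel (when $k\ge 5$) or a theta (when $k=3$) is a reasonable plan.

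However, as you say yourself, the entire difficulty of the lemma is the attachment analysis that makes $H'$ (respectively, the theta) induced, and your proposal contains no argument for it; that is a genuine gap, not a deferred detail. Concretely: choosing $Q$ as a shortest path in $G[B]$ controls its attachments to the two sector interiors it joins, but not to the spokes (which lie in $C$, outside $G[B]$) nor to the remaining sector interiors (which also lie in $B$ and may be closer to $Q^*$ than the chosen endpoints are). Lemma~\ref{lemma:common_nbrs} only constrains a pair of \emph{adjacent} vertices each having two \emph{non-adjacent} neighbours on $H$; it says nothing about a single vertex of $Q^*$ with one neighbour on a spoke or on a third sector, which is the typical obstruction. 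Moreover, in the $k\ge 5$ case the fallback ``reroute to a shorter hole'' is dangerous: shortcutting $H'$ through a chord can drop spokes, so the resulting wheel may have an odd number of spokes, or degenerate into a bug or twin wheel, which the hypotheses permit --- the parity contradiction is not automatic. Some sub-cases do close instantly (for example, a vertex of $B$ adjacent to two non-adjacent spokes $s_i,s_j$ yields the $C_4$ $v\hbox{-}s_i\hbox{-}q\hbox{-}s_j\hbox{-}v$, since $v$ has no neighbour in $B$), but an exhaustive analysis of how $Q^*$ attaches to $H$, producing a $C_4$, theta, prism or even wheel in every branch, is precisely the content of the proof in \cite{tw4}, and it is absent here.
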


\subsection{Trisimplicial elimination orderings} \label{sec:bisimplicial}

A vertex $v$ of a graph $G$ is called {\em simplicial} if its neighborhood is a clique, {\em bisimplicial} if there exist cliques $K_1, K_2$ of $G$ such that $N(v) = K_1 \cup K_2$, and {\em trisimplicial} if there exist cliques $K_1, K_2, K_3$ of $G$ such that $N(v) = K_1 \cup K_2 \cup K_3$. 

A central ingredient in our argument is the fact that the graphs we consider admit elimination orderings $v_1, \dots, v_{|V(G)|}$ of their vertex sets such that for each $i$, $N(v_i) \cap \{v_i, v_{i + 1}, \dots, v_{|V(G)|}\}$ is the union of a bounded number of cliques (in particular, bisimplicial or trisimplicial elimination orderings qualify).

It is worth pointing out that every even-hole-free graph has a bisimplicial vertex \cite{ehf-bisimplicial}, and as such admits a bisimplicial elimination ordering. This gives us hope that it might be possible to generalize the results of this paper in order to obtain a polynomial algorithm for MWIS in the whole class of even-hole-free graphs. However, for the purposes of this paper, we would like to derive and use an analogous result for the class $\mathcal C$. It is not clear how to adapt the proof from \cite{ehf-bisimplicial} to this setting; nonetheless, there is another result from \cite{kmv} that one may use as a basis for what we need here: 

\begin{theorem}\label{os2}
{\em (\cite{kmv})}
Every (even hole, diamond)-free graph has a vertex which is either simplicial or of degree 2.
\end{theorem}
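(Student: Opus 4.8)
The plan is to prove this by induction on $|V(G)|$, using the decomposition theorem for (even hole, diamond)-free graphs outlined in the introduction: every such graph is either \emph{basic} (a clique, a hole, or the line graph of a tree with at most two additional vertices), or it admits a star cutset that partitions into two cliques, or a $2$-join. The base of the induction is to verify the statement for basic graphs, and the inductive step is to locate, in a suitably chosen terminal piece of the decomposition, an extreme vertex (simplicial or of degree $2$) lying in the \emph{interior} of that piece, i.e.\ away from the cutset along which the piece attaches to the rest of $G$. The point of insisting on interiority is that if $x$ lies in a component $D$ of $G \setminus C$ for a cutset $C$, then $N_G(x) \subseteq D \cup C$, so $x$ has the same neighborhood in the block $G[D \cup C]$ as in $G$; hence an extreme vertex of the block that avoids the boundary is automatically extreme in $G$.

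For the base cases I would argue directly. In a clique every vertex is simplicial, and in a hole every vertex has degree $2$, so both are immediate. For the line graph $L(T)$ of a tree $T$, any pendant edge of $T$ (an edge incident to a leaf) yields a vertex of $L(T)$ whose neighborhood is exactly the set of edges of $T$ incident to the non-leaf endpoint, and these edges pairwise share that endpoint, hence form a clique; so that vertex is simplicial. To handle the ``at most two extra vertices'' refinement, I would choose a pendant edge of $T$ whose associated star is disjoint from the (at most two) attachment points of the extra vertices, which is possible whenever $T$ has enough leaves; the finitely many small remaining configurations can be checked by hand.

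For the inductive step I would pass to a \emph{terminal} piece $H$ of the decomposition, one that is basic and attaches to the rest of $G$ along a single cutset $\partial H$. Since $H$ is an induced subgraph of $G$, it is again (even hole, diamond)-free, and by the base case it has an abundance of extreme vertices. The task is to produce one in $H \setminus \partial H$. For a clique this holds because all of its vertices are simplicial and $H$ has nonempty interior for a proper decomposition; for a hole, because every vertex has degree $2$ and interior vertices keep that degree in $G$; for a line graph plus extra vertices, because one can select a pendant-edge vertex away from both $\partial H$ and the extra vertices. In the $2$-join case one instead replaces the far side by a short \emph{marker path} linking the two attachment cliques $A_1, B_1$, chosen with the parity that preserves the lengths (mod $2$) of holes crossing the join, so that the resulting block remains in the class; induction then applies to this smaller block, and an extreme vertex falling on the genuine side $X_1$ transfers back to $G$.

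The main obstacle I anticipate is precisely the \emph{interiority and transfer} bookkeeping: one must guarantee that every basic terminal piece admits an extreme vertex disjoint from its boundary (so that its neighborhood does not leak into the rest of $G$), and, in the $2$-join case, that the marker path can be chosen to keep the block (even hole, diamond)-free while also preventing the inductively-found extreme vertex from landing on a marker vertex (a degree-$2$ marker would otherwise be a spurious ``extreme'' vertex). Managing these two issues — forcing extremeness away from the cutset, and designing markers that preserve both the class and the parity constraints imposed by even-hole-freeness — is where the real work lies; the structural case analysis for basic graphs, by contrast, is routine.
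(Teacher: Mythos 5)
First, a point of reference: the paper does not prove Theorem~\ref{os2} at all --- it is quoted from \cite{kmv}, and the authors explicitly decline to reproduce or adapt that proof because it is ``a bit long,'' proving instead the weaker Lemma~\ref{lem:trisimplicial} by a genuinely different and much shorter route (an extreme star cutset whose minimal component is wheel-free, hence decomposes by clique cutsets into cliques and holes via Lemma~\ref{lemma:us}). Your plan instead follows the strategy of the cited source: decompose by $2$-joins and bisimplicial star cutsets into basic pieces and induct. That is the right high-level strategy for the full statement, but what you have written is a plan with the hard parts named rather than done, and several of the named parts are exactly where the proof can fail.

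Concretely: (a) your inductive step only discusses ``terminal basic pieces'' and $2$-joins, and never actually handles the star-cutset case --- you must show the block of a star cutset is smaller, stays in the class, and has an extreme vertex \emph{inside the component} rather than at the center $v$ or in $C$, and that a ``terminal'' basic piece even exists (termination of the decomposition is part of the content of \cite{kmv}). (b) In the $2$-join case, ``an extreme vertex falling on the genuine side $X_1$ transfers back'' is false for vertices in the attachment sets $A_1\cup B_1$: such a vertex $x$ has $N_G(x)=(N(x)\cap X_1)\cup A_2$, and this is a clique only if $N(x)\cap X_1\subseteq A_1$, so you need the inductively found vertex to lie in $X_1\setminus(A_1\cup B_1)$ and also off the marker path --- you flag the marker issue but resolve neither. (c) In the base case, ``the finitely many small remaining configurations can be checked by hand'' is not right: a line graph of a tree with few leaves (e.g.\ a long path) plus two attached vertices is not a bounded configuration, so that case needs an actual argument (degree-$2$ interior vertices, and control of where the two extra vertices attach). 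Until (a)--(c) are carried out, this is an outline of the proof in \cite{kmv}, not a proof.
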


We note that this result is in a sense orthogonal to the one from \cite{ehf-bisimplicial}: it requires the additional condition of diamond-freeness, and yields a stronger outcome than just a bisimplicial vertex. While the proof of this result does generalize to the class $\mathcal C$ (and in fact to the even more general class of ($C_4$, diamond, theta, prism, even wheel)-free graphs) in a relatively straightforward way, it is a bit long. Instead, we include here a shorter proof of a weaker result which is still strong enough for our purposes. 

\begin{lemma}\label{lem:trisimplicial}
    Every graph in $\mathcal C$ has a trisimplicial vertex.
\end{lemma}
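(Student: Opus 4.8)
The plan is to first translate trisimpliciality into a counting statement using diamond-freeness. For any $v$, the graph $G[N(v)]$ cannot contain three vertices $a,b,c$ with $a$ adjacent to both $b,c$ but $b$ nonadjacent to $c$, since then $\{v,a,b,c\}$ would induce a diamond. Hence every vertex of $G[N(v)]$ has a clique neighbourhood inside $N(v)$, so $G[N(v)]$ is a disjoint union of cliques, and $v$ is trisimplicial exactly when $G[N(v)]$ has at most three components. Thus it suffices to produce a vertex $v$ with $\alpha(G[N(v)]) \le 3$.

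Next I would run an induction on $|V(G)|$, using clique cutsets to reduce to the case of no clique cutset. To make the induction place the trisimplicial vertex correctly, I would prove the slightly stronger statement: for every clique $Q \subsetneq V(G)$, there is a trisimplicial vertex of $G$ outside $Q$. Granting this for smaller graphs, if $K$ is a clique cutset of $G$ and $A$ is a component of $G \setminus K$, then applying the strengthened statement to the smaller graph $G[A \cup K] \in \mathcal C$ with $Q = K$ yields a trisimplicial vertex $u \in A$; since $N_G(u) = N_{G[A \cup K]}(u)$, this $u$ is trisimplicial in $G$ and lies outside $K$. This reduces everything to graphs with no clique cutset.

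For the base case I would invoke Lemma~\ref{lemma:us}. If $G$ is also wheel-free, then $G$ is (3PC, wheel)-free and hence complete or a hole; a complete graph has all vertices simplicial and a hole has all vertices bisimplicial, so in either case any vertex outside $Q$ (one exists as $Q \ne V(G)$) works. The remaining, and principal, case is when $G$ has no clique cutset but contains a wheel. Here I would first record that the wheels are heavily constrained: since $G$ is pyramid-free it has no bug (a bug is a short pyramid, hence a pyramid), and since $G$ is ($C_4$, diamond)-free it has no twin wheel and no universal wheel (three consecutive rim-neighbours of a hub force a diamond, and two rim-neighbours at distance two force a $C_4$). Consequently every hub meets each hole in arcs of length at most two separated by gaps of length at least three.

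With this structure established, the idea is to fix a shortest hole $H$ (so $|H| \ge 5$, as $G$ is $C_4$-free) and to search for a trisimplicial vertex among the vertices of $H$ and their attachments, using Lemma~\ref{lemma:common_nbrs} to bound how two adjacent vertices can attach to $H$, and Lemma~\ref{lemma:no_wheels} to prevent a hole from being swallowed by a star separation centred at a hub. I expect this wheel subcase to be the only genuine obstacle: hubs themselves have large neighbourhood-clique number, so the work is to show that some rim or arc-interior vertex has its entire neighbourhood covered by at most three cliques, and that such a vertex can be chosen outside the prescribed clique $Q$. The reductions above and the complete/hole base cases are routine; the slack afforded by asking only for trisimpliciality (rather than a simplicial or degree-two vertex) is exactly what should let us bypass the longer argument behind Theorem~\ref{os2}.
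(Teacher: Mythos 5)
Your preliminary reductions are fine: the observation that diamond-freeness makes $N(v)$ a disjoint union of cliques (this is Lemma~\ref{lemma:clique-nbrs} in the paper), the clique-cutset induction with the strengthened ``outside $Q$'' statement, the complete/hole base case via Lemma~\ref{lemma:us}, and the exclusion of bugs, twin wheels and universal wheels are all correct. But the proof stops exactly where the real difficulty begins. For the case where $G$ has no clique cutset and contains a wheel, you offer only a plan (``the idea is to fix a shortest hole\dots'', ``I expect this wheel subcase to be the only genuine obstacle\dots'', ``the work is to show\dots'') with no argument. That case is the entire content of the lemma, and the shortest-hole-plus-attachments strategy you sketch is not the paper's route and is not obviously executable: nothing you have set up forces a rim vertex or an arc-interior vertex of a shortest hole to have only three cliques in its neighbourhood, nor to avoid the prescribed clique $Q$.

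The idea you are missing is an extremal choice of \emph{star cutset} rather than an analysis of a fixed hole. Since $G$ contains a wheel that is neither a bug nor a twin nor a universal wheel, Lemma~\ref{lemma:no_wheels} forces $G$ to have a star cutset; the paper picks a star cutset $C$ with center $v$ such that some component $D$ of $G \setminus C$ is minimal over all choices of star cutset. If $v$ is complete to $D$ then $D$ is a singleton whose unique vertex is simplicial. Otherwise $v$ is anticomplete to $D$ (else one shrinks $D$), and minimality together with another application of Lemma~\ref{lemma:no_wheels} shows $G[D]$ is wheel-free: a wheel in $G[D]$ would yield a star cutset centred inside $D$ whose relevant component is properly contained in $D$. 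The wheel-free case (your base case, which the paper proves via an extreme clique cutset) then gives a bisimplicial vertex $x$ of $G[D]$; since $N_{G \setminus D}(x) \subseteq N(v)$ and $x$ is non-adjacent to $v$, the ($C_4$, diamond)-freeness gives $x$ at most one neighbour outside $D$, so $x$ is trisimplicial in $G$. Without this (or some substitute) your argument does not go through, so as written the proposal has a genuine gap.
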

\begin{proof}
    We first prove that graphs in $\mathcal C$ with no wheel satisfy a stronger statement:

    \sta{\label{nowheel} If a graph $G \in \mathcal C$ contains no wheel, then it has a bisimplicial vertex.}

    Note that $G$ is (3PC, wheel)-free, so by Lemma~\ref{lemma:us}, it either has a clique cutset, or is a complete graph or a hole. In the latter two cases, $G$ satisfies the statement, so assume $G$ has a clique cutset. In that case, it is well-known (see e.g.\ Lemma~3.3 of \cite{vuskovic-truemper}) that $G$ has an extreme clique cutset $C$, that is, a clique cutset $C$ such that, for some component $D$ of $G \setminus C$, $D \cup C$ has no clique cutset. As above, $D \cup C$ is a complete graph or a hole; in particular, any vertex in $D$ is bisimplicial in $G$, proving \eqref{nowheel}.
        
    \bigskip

    We are ready to show that general graphs in $\mathcal C$ have trisimplicial vertices. We do so using an approach conceptually similar to the proof of \eqref{nowheel}, by finding an ``extreme'' star cutset in $G$.

    \bigskip
    
    If $G$ is wheel-free, then we are done by \eqref{nowheel}, so assume $G$ contains a wheel $(H, w)$.  Since $G$ is pyramid-free, $(H, w)$ is not a bug, and since it is diamond-free, it is not a twin or universal wheel, and hence Lemma~\ref{lemma:no_wheels} implies that $G$ has a star cutset. Let us pick a star cutset $C$ with center $v$ such that some component $D$ of $G \setminus C$ is minimal, in the sense that it does not properly contain a component of $G \setminus C'$ for any other choice of star cutset $C'$.

    \begin{itemize}
        \item[] \textit{Case 1:} $v$ is complete to $D$. In this case, $D$ is a singleton (since otherwise we may add all but one vertex of $D$ to the cutset, and removing the new cutset leaves a singleton component properly contained in $D$, contradicting the minimality of $D$). But in this case, the unique vertex in $D$ is simplicial in $G$ by diamond-freeness, since its neighborhood is contained in $N[v]$, and we are done.
        \medskip
        \item[] \textit{Case 2:} $v$ is not complete to $D$. In this case, once more by the minimality of $D$, $v$ must be anticomplete to $D$ (since otherwise we may add all neighbors of $v$ in $D$ to the cutset and obtain once more a smaller component). We now claim:

        \sta{\label{Dnowheel} $G[D]$ is wheel-free.} 
     
        Indeed, suppose not, and let $(H, u)$ be a wheel in $G[D]$. Put $C' := N[u]$, and note that, by Lemma~\ref{lemma:no_wheels}, $C'$ is a star cutset in $G$ with center $u$, and 
        $H \nsubseteq Q \cup C'$ for any component $Q$ of $G \setminus C'$ (in actuality, the statement of the lemma yields $H \nsubseteq Q \cup N(Q) \cup \{u\}$, but by diamond-freeness, we cannot have $H \setminus (Q \cup N(Q) \cup \{u\}) \subseteq C' \setminus N(Q)$, since if that were the case, any vertex in $H \setminus (Q \cup N(Q) \cup \{u\})$ would be the center of a $P_3$ in $H \cap N(u)$). Let $B'$ be the component of $G \setminus C'$ containing $v$. By the above, there exists $x \in D \setminus (B' \cup C')$. Let $D'$ be the component of $G \setminus C'$ containing $x$. Since every path from $x$ to $G \setminus D$ intersects $C$, and since $C \subseteq B' \cup C'$, it follows that $D' \subseteq D$, and this containment is proper (as $u \in D \setminus D'$), contradicting the choice of $C$ and $D$.   

        \bigskip
        
        Now by \eqref{nowheel}, $G[D]$ has a bisimplicial vertex $x$. But $x$ has at most one neighbor in $G \setminus D$, since $N_{G \setminus D}(x) \subseteq N(v)$, and $x$ and $v$ are non-adjacent (and so by ($C_4$, diamond)-freeness, they have at most one common neighbor). Thus $x$ is trisimplicial in $G$, completing the proof.
    \end{itemize}

\end{proof}

 \section{Balanced separators and central bags}
 \label{sec:bs-in-cbags}
In this section, we construct a useful central bag for graphs in $\mathcal{C}$ and prove that the central bag has a balanced separator of bounded $\cchi$. We note that, because of Lemma \ref{lemma:clique-cutsets-tc}, we often assume that the graphs we work with do not have clique cutsets. We also have the following simple characterisation of the neighborhood of vertices in diamond-free graphs. 
\begin{lemma}[\cite{tw4}]
\label{lemma:clique-nbrs}
Let $G$ be diamond-free and let $v \in V(G)$. Then, $N(v)$ is the union of disjoint and pairwise anticomplete cliques. 
\end{lemma}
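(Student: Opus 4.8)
The plan is to reduce the statement to a simple forbidden-induced-subgraph characterisation. Since distinct connected components of an induced subgraph are automatically vertex-disjoint and pairwise anticomplete, it suffices to show that every connected component of the induced subgraph $G[N(v)]$ is a clique; equivalently, that $G[N(v)]$ is a disjoint union of cliques.

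The key tool is the standard fact that a graph is a disjoint union of cliques if and only if it contains no induced $P_3$ (path on three vertices). So the crux is to prove that $G[N(v)]$ is $P_3$-free. I would argue this by contradiction: suppose $x, y, z \in N(v)$ induce a $P_3$ in $G[N(v)]$, say with $xy, yz \in E(G)$ and $xz \notin E(G)$. Consider the four vertices $v, x, y, z$. Since $x, y, z$ all lie in $N(v)$, we have the edges $vx, vy, vz$; together with $xy$ and $yz$ this gives exactly five of the six possible edges on $\{v, x, y, z\}$, the only missing edge being $xz$. But $K_4$ minus an edge is precisely a diamond, so $G[\{v, x, y, z\}]$ is a diamond, contradicting the hypothesis that $G$ is diamond-free.

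Once $G[N(v)]$ is known to be $P_3$-free, the conclusion is immediate: each connected component is complete (a connected $P_3$-free graph is a clique), and distinct components are pairwise anticomplete by definition of a component, so $N(v)$ is the union of disjoint, pairwise anticomplete cliques. The $P_3$-free-implies-disjoint-union-of-cliques step itself can be seen directly from the same diamond-style argument carried out inside $G[N(v)]$, and is entirely routine. There is no genuine obstacle here: the whole argument rests on the single observation that an induced $P_3$ among neighbours of $v$ completes to a diamond with $v$, and I expect the proof to be only a few lines long.
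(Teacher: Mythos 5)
Your proof is correct and is the standard one-line argument: an induced $P_3$ in $N(v)$ together with $v$ yields a $K_4$ minus an edge, i.e.\ a diamond, so $G[N(v)]$ is $P_3$-free and hence a disjoint union of pairwise anticomplete cliques. The paper cites this lemma from an external reference without reproducing a proof, and your argument is exactly the expected one.
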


For $X \subseteq V(G)$, let $\Hub(X)$ denote the set of all vertices $x \in X$ for which there exists a wheel $(H, x)$ which is not a bug and with $H \subseteq X$. We will need several results from \cite{tw4} that we will now describe, introducing the relevant notation and terminology as necessary. Note that we will not explicitly use Lemmas~\ref{lemma:bounding-nbrhood-helper}~and~\ref{lemma:new-sepns-noncrossing} in this paper, but for completeness (and to give the reader some intuition of how the results that we do use explicitly were obtained), we include their statements. We start with a result showing that, under suitable conditions, the components of a central bag by a star cutset attach to the cutset in a controlled way:

\begin{lemma}[\cite{tw4}]
\label{lemma:bounding-nbrhood-helper}
Let $G$ be a ($C_4$, theta, prism, even wheel, diamond)-free graph with no clique cutset and let $w:V(G) \to [0, 1]$ be a weight function on $G$ with $w(G) = 1$. Let $\S$ be a smooth collection of separations of $G$, let $\beta_\S$ be the central bag for $\S$, and let $w_\S$ be the inherited weight function on $\beta_\S$. Let $v \in \beta_\S$ and (by Lemma \ref{lemma:clique-nbrs}) let $N_{\beta_\S}(v) \setminus \Hub(\beta_\S) =K_1 \cup \cdots \cup K_t$, where $K_1, \hdots, K_t$ are disjoint, pairwise anticomplete cliques. Assume that $v$ is not a pyramid apex in $\beta_\S$. Let $D$ be a component of $\beta_\S \setminus N[v]$. Then, at most two of $K_1, \hdots, K_t$ have a neighbor in $D$. 
\end{lemma}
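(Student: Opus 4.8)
The plan is to argue by contradiction: suppose three of the anticomplete cliques, say $K_1$, $K_2$, $K_3$, each have a neighbor in the component $D$. I would pick vertices $k_i \in K_i$ with a neighbor in $D$ for $i=1,2,3$, and then connect things up inside $D$ to build a three-path configuration emanating from $v$, which must be a pyramid (since $G$ is theta- and prism-free by hypothesis). The apex of that pyramid should turn out to be $v$, contradicting the assumption that $v$ is not a pyramid apex in $\beta_\S$. The overall shape of the argument is: use connectivity of $D$ to route three induced paths from the three cliques, assemble them into a 3PC at $v$, and then use the excluded-subgraph hypotheses to pin down that the 3PC is a pyramid with apex $v$.

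First I would set up the paths carefully. Since $D$ is connected and each $K_i$ has a neighbor in $D$, for each $i$ I can find an induced path $Q_i$ from $k_i$ into $D$ whose interior lies in $D$. The delicate point is to choose these paths so that they are pairwise ``compatible'': I want the three paths, together with $v$ (which is complete to each $K_i$ along the edges $vk_i$), to induce exactly a pyramid and nothing more. Because the $K_i$ are pairwise anticomplete and each is a clique, the three edges $vk_1, vk_2, vk_3$ give $v$ exactly the triangle-free attachment pattern of a pyramid apex rather than, say, a bug or a universal wheel. The standard way to extract a clean 3PC here is to take a minimal connected subgraph of $D$ meeting all three of $N(k_1)\cap D$, $N(k_2)\cap D$, $N(k_3)\cap D$, and then analyze its structure.

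The main obstacle, which I expect to absorb most of the work, is controlling how the cliques $K_1,K_2,K_3$ and their attachment to $D$ can interact so that the configuration we build is genuinely a pyramid rather than a theta, prism, or wheel (all of which are forbidden, so in principle any of them would already give a contradiction, but one still has to verify that \emph{some} 3PC or forbidden wheel actually appears). In particular, I would need to rule out degenerate cases where two of the $k_i$ share a common neighbor in $D$, or where the minimal connecting subgraph creates a wheel centered at a vertex of $D$ rather than a clean pyramid at $v$ — and here the hypotheses that $G$ is $(C_4,\text{diamond})$-free (controlling common neighbors, via Lemma~\ref{lemma:clique-nbrs} and the fact that nonadjacent vertices have at most one common neighbor) and even-wheel-free (via Lemma~\ref{lemma:common_nbrs}) should be exactly what is needed. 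The role of the hypothesis $v \notin \Hub(\beta_\S)$ built into the statement (we removed $\Hub(\beta_\S)$ from $N_{\beta_\S}(v)$) is presumably to guarantee that no intermediate vertex can serve as a wheel hub absorbing the configuration.

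Finally, once a pyramid is exhibited, I would check that $v$ is forced to be its apex: the three triangle-free ``legs'' start at the mutually anticomplete vertices $k_1,k_2,k_3$, so the only vertex complete to the start of all three legs in the triangle pattern is $v$ itself, whence $v$ is the apex. This contradicts the assumption that $v$ is not a pyramid apex in $\beta_\S$, completing the argument. I would expect the write-up to hinge on a careful but essentially local case analysis of the neighborhoods of the $k_i$ in $D$, using $C_4$- and diamond-freeness repeatedly to keep the paths induced and to bound shared neighbors.
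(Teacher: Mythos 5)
The paper itself gives no proof of this lemma---it is quoted from \cite{tw4}---so there is nothing in-text to compare against; I am judging your plan on its own terms. The skeleton is right and matches the standard argument: assume $K_1,K_2,K_3$ all attach to $D$, pick $k_i\in K_i$ with a neighbour in $D$, take a minimal connected subgraph of $D$ meeting all three neighbourhoods, and apply Lemma~\ref{lemma:three_vtx_attachments}. The ``spider'' outcome gives a theta between $v$ and the branch vertex, the ``triangle'' outcome gives a pyramid with apex $v$ (both contradictions, using that the $k_i$ are pairwise non-adjacent and that $v$ is anticomplete to $D$), and $C_4$-freeness does, as you say, forbid two of the $k_i$ from sharing a neighbour in $D$.

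The genuine gap is in the remaining outcome, where the minimal connector is the interior of a path $P$ from $k_i$ to $k_j$ and the third vertex $k_l$ attaches to $P^*$. Your assertion that theta, prism ``or wheel'' are ``all forbidden'' is false: only \emph{even} wheels are excluded from $\mathcal C$, and the configuration that actually arises here---the hole $v\dd k_i\dd P^*\dd k_j\dd v$ together with $k_l$, adjacent to $v$ and to two non-adjacent vertices of $P^*$---is an odd, non-bug wheel that is perfectly realizable in this class (for instance, a $C_9$ plus a hub with three pairwise non-adjacent spokes is ($C_4$, diamond, 3PC, even wheel)-free). Lemma~\ref{lemma:common_nbrs} does not apply either, since $v$ lies \emph{on} the hole rather than outside it. The mechanism that closes this case, which you gesture at but misattribute, is the $\Hub$ hypothesis: the hub of the offending wheel is not ``an intermediate vertex'' or ``a vertex of $D$'' but $k_l$ itself, and since the wheel lies in $\beta_{\S}$ and is not a bug, it forces $k_l\in\Hub(\beta_{\S})$, contradicting $k_l\in N_{\beta_{\S}}(v)\setminus\Hub(\beta_{\S})$. (In the companion sub-case, where $k_l$ has exactly two adjacent neighbours in $P^*$, the wheel is a bug, i.e., a pyramid with apex $v$, and the non-apex hypothesis finishes it.) Without identifying this step correctly, your case analysis does not close.
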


We next have a result about balanced separators in central bags with balanced vertices. This result follows directly from the proof of Lemma~20 of \cite{tw4}. The statement of that lemma gives a separator of size $6\omega(\beta_s) + k$, under the stronger assumption that $|N(v) \cap \Hub(\beta_{\S})| < k$, but its proof works by actually providing a clique cover of $N(v) \setminus \Hub(\beta_{\S})$ by 6 cliques, which is strong enough to derive the following: 
\begin{lemma}[\cite{tw4}]
\label{lemma:balanced_vtx_bs}
Let $G$ be a ($C_4$, theta, pyramid, prism, diamond, even wheel)-free graph with no clique cutset, let $w:V(G) \to [0, 1]$ be a weight function on $G$ with $w(G) = 1$, let $c \in [\frac{1}{2}, 1)$, and let $k$ be a positive integer.
Let $\S$ be a smooth collection of separations of $G$, let $\beta_\S$ be the central bag for $\S$, and let $w_\S$ be the inherited weight function on $\beta_\S$. Suppose that there exists $v \in \beta_\S$ such that $v$ is balanced in $G$, and assume that $\cchi(N(v) \cap \Hub(\beta_\S)) \leq k$. Then $\beta_\S$ has a $(w_\S, c)$-balanced
separator of clique cover number at most $6 + k$.
\end{lemma}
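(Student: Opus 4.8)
The plan is to produce a balanced separator for $\beta_\S$ by taking the closed neighborhood of the balanced vertex $v$, restricted to $\beta_\S$, and controlling its clique cover number. Concretely, I would set $X := N_{\beta_\S}[v]$ (or a suitable subset thereof) and argue that $X$ is a $(w_\S, c)$-balanced separator with $\cchi(X) \leq 6 + k$. The two things to verify are (a) that removing $X$ leaves every component of small inherited weight, and (b) that $X$ has the claimed clique cover number.

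For the clique cover bound, the key is to split $N_{\beta_\S}(v)$ into its intersection with $\Hub(\beta_\S)$ and its complement. By hypothesis $\cchi(N(v) \cap \Hub(\beta_\S)) < k$, so this part contributes fewer than $k$ cliques. For the remaining part $N_{\beta_\S}(v) \setminus \Hub(\beta_\S)$, I would invoke the clique cover by $6$ cliques that the excerpt explicitly says is extracted from the proof of Lemma~3.6 of \cite{tw4}; adding the single clique $\{v\}$ and bookkeeping gives $\cchi(X) \leq 6 + k$. (Whether $v$ itself is absorbed into an existing clique or counted separately only affects the constant by one, which is harmless.) Here Lemma~\ref{lemma:clique-nbrs} guarantees the non-hub part is already a disjoint union of anticomplete cliques, and the content of the borrowed result is that these can be grouped into at most $6$ cliques using the $(C_4, \text{theta}, \text{pyramid}, \text{prism}, \text{diamond}, \text{even wheel})$-freeness.

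For the balancedness of the separator with respect to $w_\S$, the main point is to transfer the fact that $v$ is balanced \emph{in $G$} (with respect to $w$) to the statement that $X$ is balanced \emph{in $\beta_\S$} (with respect to $w_\S$). Since $v$ is balanced in $G$, every component of $G \setminus N_G[v]$ has $w$-weight at most $\frac{1}{2} \leq c$. The plan is to show that each component $D$ of $\beta_\S \setminus X$ is contained in a single component of $G \setminus N_G[v]$, and that the inherited weight $w_\S(D)$ does not exceed the true weight $w$ of that $G$-component. This is where Lemma~\ref{lemma:bounding-nbrhood-helper} enters: it controls how the components $D$ of $\beta_\S \setminus N[v]$ attach to the $K_i$'s, which is needed to ensure that the weight of the deleted $A(S)$-pieces folded into $\beta_\S$ by $w_\S$ is correctly accounted to components on the correct side of $N_G[v]$. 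I would argue that each anchor vertex $v_i$ whose weight $w_\S(v_i)$ includes the mass of some component $A^*(f(v_i))$ lies in a single $G \setminus N_G[v]$ component together with the whole of that $A^*(f(v_i))$, so no weight gets double-counted across the cut and each $\beta_\S$-component inherits at most the $w$-weight of one $G$-component, hence at most $\frac{1}{2} \leq c$.

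The main obstacle I expect is precisely this weight-transfer step: verifying that the inherited weight function $w_\S$, which records the deleted components' mass at anchor vertices, respects the separation induced by $N_G[v]$. One must rule out the scenario where a single $\beta_\S$-component $D$ anchors mass coming from several different $G$-components, or where deleted mass that genuinely lies in a large $G$-component gets reassigned to an anchor on the ``wrong'' side of the cut, inflating $w_\S(D)$ beyond $c$. Lemma~\ref{lemma:bounding-nbrhood-helper} (limiting to two the cliques $K_i$ with a neighbor in $D$, under the hypothesis that $v$ is not a pyramid apex, which holds since $\beta_\S$ is pyramid-free) together with the structure of canonical star separations should pin down the correspondence between $\beta_\S$-components and $G$-components tightly enough to make this go through, and the remaining verifications are routine.
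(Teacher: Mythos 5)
This lemma is not proved in the paper: it is imported from \cite{tw4}, with the remark that the proof of Lemma~3.6 there already produces a clique cover of $N(v)\setminus\Hub(\beta_\S)$ by $6$ cliques, which is exactly the black box your sketch relies on. Your reconstruction (take $N_{\beta_\S}[v]$ as the separator, cover it by $\{v\}$ plus fewer than $k$ cliques for the hub part plus $6$ cliques for the non-hub part, and transfer balancedness from $w$ on $G$ to $w_\S$ on $\beta_\S$) matches the strategy the paper attributes to \cite{tw4}, and you correctly isolate the weight-transfer step as the delicate point. Two small corrections: the arithmetic $1+(k-1)+6=6+k$ is exact because the hypothesis is the strict inequality $\cchi(N(v)\cap\Hub(\beta_\S))<k$, so there is no ``harmless off-by-one'' to absorb; and Lemma~\ref{lemma:bounding-nbrhood-helper} is really the engine behind the $6$-clique cover of $N_{\beta_\S}(v)\setminus\Hub(\beta_\S)$ rather than of the weight transfer. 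The weight transfer instead follows from the definition of smooth collections and anchors: for an anchor $v_i$ lying in a component $D$ of $\beta_\S\setminus N_{\beta_\S}[v]$, every component $D_0$ of $G\setminus\beta_\S$ anchored at $v_i$ has $N(D_0)\subseteq C(f(v_i))\subseteq N[v_i]$, hence is disjoint from $N[v]$; and since $G$ has no clique cutset and two non-adjacent vertices have at most one common neighbour ($C_4$- and diamond-freeness), $D_0$ attaches to $G\setminus N[v]$ through some vertex of $N[v_i]\setminus N[v]$ and so lies in the same component of $G\setminus N[v]$ as $D$, which has weight at most $\frac12\le c$ because $v$ is balanced in $G$.
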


Recall that $\CC^*$ denotes the class of ($C_4$, diamond, theta, prism, even wheel)-free graphs with no clique cutset. Let $G \in \CC^*$, let $w:V(G) \to [0, 1]$ be a weight function on $G$ with $w(G) = 1$, and let $U$ be the set of unbalanced vertices of $G$. Let $X \subseteq U$. The {\em $X$-revised collection of separations}, denoted $\tilde{\S}_X$, is defined as follows. Let $u \in X$, and let $\tilde{S}_u = (\tilde{A}_u, \tilde{C}_u, \tilde{B}_u)$ be such that $\tilde{B}_u$ is the largest-weight connected component of $G \setminus N[u]$, $\tilde{C}_u = C_u \cup \bigcup_{v \in X \cap (C_u \setminus \{u\})} (N(u) \cap N(v))$, and $\tilde{A}_u = V(G) \setminus (\tilde{C}_u \cup \tilde{B}_u)$. Then, $\tilde{\S}_X = \{\tilde{S}_u : u \in X\}$. Note that the separations in $\tilde{\S}_X$ are closely related to canonical star separations. Specifically, for all $u \in X$, the following hold: 
\begin{enumerate}[(i)]
    \item $\tilde{B}_u = B_u$, 
    \item $C_u \subseteq \tilde{C}_u \subseteq N[u]$, 
    \item $\tilde{A}_u \subseteq A_u$, 
    \item $A_u \setminus N(u) \subseteq \tilde{A}_u$.
\end{enumerate}

One way of thinking about the revised separations is that we begin with the canonical separation, and extend $C_u$ by extending every edge $ux$ with $x \in C_u \cap X$ to a maximal clique (each such clique has size at least 2, so it can be extended to a maximal clique in a unique way, by diamond-freeness). 

\bigskip

    The following lemma shows that separations from a revised collection are nearly non-crossing. It follows directly from Lemma~21 of \cite{tw4}, which additionally assumes $K_t$-freeness in the statement, but never uses that assumption in the proof.

 \begin{lemma}[\cite{tw4}] \label{lemma:new-sepns-noncrossing}
 Let $G \in \CC^*$, let $w: V(G) \to [0, 1]$ be a weight function on $G$ with $w(G) = 1$, let $U$ be the set of unbalanced vertices of $G$, and let $X \subseteq U$ be such that every vertex of $X$ is minimal under the relation $\leq_A$. Let $\tilde{\S} = \tilde{\S}_X$ be the $X$-revised collection of separations. Then, $\tilde{S}_u$ and $\tilde{S}_v$ are nearly non-crossing for all $\tilde{S}_u, \tilde{S}_v \in \tilde{\S}$. 
 \end{lemma}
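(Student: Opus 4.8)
The statement is Lemma~3.7 of \cite{tw4} with a superfluous $K_t$-freeness hypothesis removed, so the cleanest route is to re-read that proof and confirm that $K_t$-freeness is never invoked; the remaining hypotheses---($C_4$, diamond, theta, prism, even wheel)-freeness, the absence of a clique cutset, and the $\leq_A$-minimality of $X$---are exactly what membership in $\CC^*$ and the hypothesis on $X$ supply. For completeness I describe the argument I would run to reprove it directly. First I would simplify the target: if a component $D$ of $\tilde A_u \cup \tilde A_v$ lies inside $\tilde A_u$, then it is automatically a component of $\tilde A_u$ (a strictly larger connected subset of $\tilde A_u$ would also be a connected subset of the union, contradicting maximality of $D$), and likewise for $\tilde A_v$. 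Hence ``$\tilde S_u$ and $\tilde S_v$ are nearly non-crossing'' is equivalent to the assertion that every component $D$ of $\tilde A_u \cup \tilde A_v$ is contained in $\tilde A_u$ or in $\tilde A_v$, and I would assume for contradiction that some $D$ meets both $\tilde A_u \setminus \tilde A_v$ and $\tilde A_v \setminus \tilde A_u$.

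Next I would locate the crossing. Walking along a path of $D$ from a vertex of $\tilde A_u \setminus \tilde A_v$ to a vertex of $\tilde A_v \setminus \tilde A_u$, the first edge that leaves $\tilde A_u$ has second endpoint $y \in \tilde A_v \setminus \tilde A_u$; since $\tilde A_u \subseteq A_u$ is anticomplete to $\tilde B_u = B_u$, the vertex $y$ cannot lie in $\tilde B_u$, so $y \in \tilde C_u \subseteq N[u]$. As the $\leq_A$-minimality of $X$ forces $u$ and $v$ to be incomparable (Lemma~\ref{lemma:leqA-partial-order}), we have $u \notin A_v \supseteq \tilde A_v$, whence $y \neq u$ and $y \in N(u) \cap \tilde C_u \cap \tilde A_v$. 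The symmetric walk yields $y' \in N(v) \cap \tilde C_v \cap \tilde A_u$.

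I would then dispatch the easy cases using incomparability, which gives $u \in C_v \cup B_v$ and $v \in C_u \cup B_u$. If $u \in B_v$, then $y \in N(u)$ with $y \notin B_v$ forces $y \in N(B_v) = C_v \setminus \{v\} \subseteq \tilde C_v$, contradicting $y \in \tilde A_v$; symmetrically $v \in B_u$ is ruled out via $y'$. Hence $u \in C_v$ and $v \in C_u$, so $uv \in E(G)$. Now the revision does its work: since $v \in X \cap (C_u \setminus \{u\})$ and $u \in X \cap (C_v \setminus \{v\})$, the definition of the revised cutsets gives $N(u) \cap N(v) \subseteq \tilde C_u \cap \tilde C_v$; as $y \in \tilde A_v$ is disjoint from $\tilde C_v$, we conclude $y \notin N(v)$, and symmetrically $y' \notin N(u)$.

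The remaining configuration---$uv \in E(G)$ with $y \in N(u) \setminus N(v)$ landing in $A_v$ and $y' \in N(v) \setminus N(u)$ landing in $A_u$---is where I expect the main difficulty, and where the full structural toolkit is needed. By Lemma~\ref{lemma:clique-nbrs}, $N(u)$ splits into pairwise anticomplete cliques, and $v, y$ lie in distinct such cliques (as $vy \notin E(G)$); the maximal clique through the edge $uv$, which by diamond-freeness is unique and is exactly what the revision uses to extend $\tilde C_u$, therefore misses $y$. Tracking whether $y$ instead belongs to the original $C_u$ or was appended through some third vertex of $X \cap C_u$, and combining this with the shield lemma (Lemma~\ref{lemma:shields}) applied to the canonical separations $S_u, S_v$ (whose hypotheses hold because $N(B_u) = C_u \setminus \{u\}$), should contradict either the maximality built into the revision or the incomparability of $u$ and $v$. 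The crux is precisely that the canonical separations need not be nearly non-crossing: the argument must genuinely exploit the clique-extension in the revision to absorb the crossing vertex, and the bookkeeping of where that vertex sits relative to both separations at once is the delicate part.
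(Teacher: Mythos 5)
Your primary route --- deferring to Lemma~3.7 of \cite{tw4} and observing that its proof never invokes the $K_t$-freeness hypothesis --- is exactly what the paper does: it offers no independent proof of this lemma, only that citation together with that remark. Your supplementary direct argument correctly handles the reduction of ``nearly non-crossing'' to containment of components, the localisation of a crossing vertex $y \in N(u) \cap \tilde{C}_u \cap \tilde{A}_v$, and the easy cases (ruling out $u \in B_v$ and absorbing common neighbours of $u$ and $v$ into the revised cutsets), but it explicitly stops short of resolving the main structural case; since the citation route suffices and matches the paper, this does not affect the verdict.
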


We now show how to construct a useful collection of separations of $G$. 
 By Theorem~\ref{lem:trisimplicial}, any graph in $\mathcal C$ admits a trisimplicial elimination ordering. Let $G \in \mathcal{C}$, and let $v_1, \dots, v_\ell$ be a trisimplicial elimination ordering of $\Hub(G)$. Let $U$ be the set of unbalanced vertices of $G$. Let $m$ be defined as follows. If $\Hub(G) \subseteq U$, then $m=\ell+1$. Otherwise, let $m$ be minimum such that $v_m$ is an element of $\Hub(G) \setminus U$. Now, $\{v_1, \hdots, v_{m-1}\} \subseteq U$. Let $M$ be the set of minimal vertices of $\{v_1, \hdots, v_{m-1}\}$ under the relation $\leq_A$, and let $\tilde{S}_M$ be the $M$-revised collection of separations. We call $(\{v_1, \hdots, v_\ell\}, m, M, \tilde{S}_M)$ the {\em hub division} of $G$. The next two lemmas describe properties of the hub division. Once more, their statements in \cite{tw4} assume $K_t$-freeness, but their proofs do not. Moreover, the hub division was defined in \cite{tw4} with respect to a different ordering of the vertices, but the proofs of these two lemmas apply to any ordering (and in particular, to our trisimplicial elimination ordering).

\begin{lemma}[\cite{tw4}]
\label{lemma:S_M-smooth}
Let $G \in \mathcal{C}$, let $w:V(G) \to [0, 1]$ be a weight function on $G$ with $w(G) = 1$, and let $(\{v_1, \hdots, v_\ell\}, m, M, \tilde{S}_M)$ be the hub division of $G$. Then, $\tilde{S}_M$ is a smooth collection of separations of $G$. 
\end{lemma}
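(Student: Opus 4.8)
The plan is to verify, one by one, the three conditions in the definition of a smooth collection for the collection $\tilde{\S}_M$, taking the associated set of unbalanced vertices to be $v(\tilde{\S}_M) = M$ and the bijection $f$ to be $u \mapsto \tilde{S}_u$. First I would record the basic facts that feed into all three checks. Since $M \subseteq \{v_1, \dots, v_{m-1}\} \subseteq U$, every vertex of $M$ is unbalanced, as required. Since $G$ has no clique cutset, Lemma~\ref{lemma:leqA-partial-order} tells us that $\leq_A$ is a partial order on $U$; and because $M$ is by definition the set of $\leq_A$-minimal elements of $\{v_1, \dots, v_{m-1}\}$, any two distinct members of $M$ are $\leq_A$-incomparable. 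This antichain property is what conditions (i) and (iii) below rely on.

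Conditions (ii) and (iii) then reduce to bookkeeping against the recorded properties of the revised separations. For condition (ii), the map $f$ is visibly a bijection from $M$ onto $\tilde{\S}_M$, and for each $u \in M$ the inclusions $u \in C_u \subseteq \tilde{C}_u \subseteq N[u]$ and $\tilde{A}_u \subseteq A_u$ are precisely the second and third properties of the revised collection; these say exactly that $u \in C(f(u)) \subseteq N[u]$ and $A(f(u)) \subseteq A_u$. For condition (iii), I must show $M \cap \tilde{A}_u = \emptyset$ for every $u \in M$. The vertex $u$ itself lies in $\tilde{C}_u$, which is disjoint from $\tilde{A}_u$; and for a distinct $v \in M$, the antichain property gives $u \not<_A v$, i.e.\ $v \notin A_u$, whence $v \notin \tilde{A}_u$ because $\tilde{A}_u \subseteq A_u$. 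Thus $v(\tilde{\S}_M) \cap A(\tilde{S}) = \emptyset$ for every $\tilde{S} \in \tilde{\S}_M$.

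The remaining condition (i), that the separations in $\tilde{\S}_M$ are pairwise nearly non-crossing, is the substantive step, and I would handle it by a single invocation of Lemma~\ref{lemma:new-sepns-noncrossing} with $X = M$. The only hypothesis of that lemma to be checked is the $\leq_A$-minimality of the vertices of $M$, and here lies the one genuine subtlety, the step I expect to require the most care: $M$ is defined as the set of minimal elements of the \emph{subset} $\{v_1, \dots, v_{m-1}\}$, whereas the lemma is phrased in terms of minimality. The point I would argue is that the conclusion of Lemma~\ref{lemma:new-sepns-noncrossing} is a pairwise statement about $\tilde{S}_u$ and $\tilde{S}_v$ that depends only on the $\leq_A$-relationship between the two vertices $u$ and $v$, so that it suffices for $M$ to be a $\leq_A$-antichain, which we have already established. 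With this in place, Lemma~\ref{lemma:new-sepns-noncrossing} yields condition (i) directly, and the three conditions together show that $\tilde{\S}_M$ is smooth. I expect essentially all of the difficulty to be concentrated in correctly matching the minimality hypothesis of Lemma~\ref{lemma:new-sepns-noncrossing} to the antichain produced by the hub division; the rest is routine verification against the definitions.
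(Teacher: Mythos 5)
The paper does not actually prove this lemma: it is imported from \cite{tw4} with only the remark that the $K_t$-freeness hypothesis there is never used, so there is no in-paper argument to compare against. Your reconstruction is correct and is the natural (essentially forced) one given the surrounding scaffolding: conditions (ii) and (iii) follow from the four listed properties of revised separations together with Lemma~\ref{lemma:leqA-partial-order} and the fact that $M$, being a set of minimal elements, is a $\leq_A$-antichain, while condition (i) is Lemma~\ref{lemma:new-sepns-noncrossing} applied with $X=M$. You are right to flag the minimality mismatch as the delicate point; note only that since $\tilde{C}_u$ depends on all of $X$, one cannot literally invoke Lemma~\ref{lemma:new-sepns-noncrossing} pair by pair --- the correct resolution is that its hypothesis should be read as ``$X$ is a $\leq_A$-antichain'' (which is how \cite{tw4} must use it, and which $M$ satisfies), rather than as minimality in all of $U$.
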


By Lemma \ref{lemma:S_M-smooth}, there is a central bag $\beta_M$ for $\tilde{S}_M$ and an inherited weight function $w_M$ on $\beta_M$. 

\begin{lemma}[\cite{tw4}]
Let $G \in \mathcal{C}$, let $w:V(G) \to [0, 1]$ be a weight function on $G$ with $w(G) = 1$, and let $(\{v_1, \hdots, v_\ell\}, m, M, \tilde{S}_M)$ be the hub division of $G$. Let $\beta_M$ be the central bag for $\tilde{S}_M$ and let $w_M$ be the inherited weight function on $\beta_M$. Then, for all $1 \leq i \leq m - 1$, $v_i \notin \Hub(\beta_M)$. 
\label{lemma:no-wheels-in-beta}
\end{lemma}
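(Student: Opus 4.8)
The plan is to argue by contradiction. Suppose some $v_i$ with $1 \le i \le m-1$ lies in $\Hub(\beta_M)$, so there is a wheel $(H, v_i)$ with $H \subseteq \beta_M$ that is not a bug, and $v_i \notin V(H)$. Since $G$ is diamond-free, Lemma~\ref{lemma:clique-nbrs} shows the neighbours of $v_i$ on $H$ form a disjoint union of vertices and edges of $H$; in particular $(H, v_i)$ is neither a twin wheel nor a universal wheel, as each of those contains a diamond (three consecutive hole vertices together with the centre). Hence $(H, v_i)$ meets the hypotheses of Lemma~\ref{lemma:no_wheels}. As $v_i \in U$, its canonical star separation $S_{v_i} = (A_{v_i}, C_{v_i}, B_{v_i})$ is defined, and one checks directly that $v_i \in C_{v_i} \subseteq N[v_i]$, that $B_{v_i}$ is connected, and that $N(B_{v_i}) = C_{v_i} \setminus \{v_i\}$. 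Lemma~\ref{lemma:no_wheels} then gives $H \not\subseteq B_{v_i} \cup C_{v_i}$, so there is a hole vertex $e \in H \cap A_{v_i}$.

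The crux is the containment $H \cap A_{v_i} \subseteq N(v_i)$, from which I would finish as follows. Granting it, any non-neighbour of $v_i$ on $H$ avoids $A_{v_i}$; it also avoids $C_{v_i}$ (since $C_{v_i} \setminus \{v_i\} \subseteq N(v_i)$ and $v_i \notin V(H)$), so it lies in $B_{v_i}$. Apply this to the two $H$-neighbours $x, y$ of the vertex $e$ above (which, by the containment, is itself a neighbour of $v_i$): if, say, $x$ were a non-neighbour of $v_i$, then $x \in B_{v_i}$ while $e \in A_{v_i}$, contradicting that $A_{v_i}$ is anticomplete to $B_{v_i}$. Hence $x, y \in N(v_i)$, so $v_i$ is complete to $\{x, e, y\}$; as $x \mh e \mh y$ is a subpath of the hole $H$ with $x$ nonadjacent to $y$, the set $\{v_i, x, e, y\}$ induces a diamond, a contradiction.

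It remains to prove $H \cap A_{v_i} \subseteq N(v_i)$. If $v_i \in M$ this is immediate: then $\tilde{S}_{v_i} \in \tilde{S}_M$, so $H \subseteq \beta_M \subseteq \tilde{B}_{v_i} \cup \tilde{C}_{v_i} = B_{v_i} \cup \tilde{C}_{v_i}$, and intersecting with $A_{v_i}$ leaves only $\tilde{C}_{v_i} \setminus C_{v_i} \subseteq A_{v_i} \cap N(v_i)$. If $v_i \notin M$, then $v_i$ is not $\leq_A$-minimal in $\{v_1, \dots, v_{m-1}\}$; since $\leq_A$ is a partial order on $U$ (Lemma~\ref{lemma:leqA-partial-order}), some $v_j \in M$ satisfies $v_i \in A_{v_j}$. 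As $v_i \in \beta_M \subseteq \tilde{B}_{v_j} \cup \tilde{C}_{v_j}$ forces $v_i \in \tilde{C}_{v_j} \setminus C_{v_j}$ (otherwise $v_i \notin \beta_M$, contrary to $v_i \in \Hub(\beta_M)$), the vertex $v_i$ is an ``extension'' vertex lying in a clique of $N(v_j)$. I would then invoke Lemma~\ref{lemma:shields} with $S_1 = S_{v_j}$, $S_2 = S_{v_i}$ to obtain that $S_{v_j}$ shields $S_{v_i}$, i.e.\ $A_{v_i} \subseteq A_{v_j}$; together with $H \subseteq \tilde{B}_{v_j} \cup \tilde{C}_{v_j}$ this confines $H \cap A_{v_i} \subseteq \tilde{C}_{v_j} \setminus C_{v_j}$ into the extension cliques around $v_j$.

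The main obstacle is this $v_i \notin M$ case, where two points require care. First, one must verify the nonemptiness hypothesis $B_{v_i} \cap (B_{v_j} \cup (C_{v_j} \setminus \{v_j\})) \neq \emptyset$ of Lemma~\ref{lemma:shields}, or else handle the alternative $B_{v_i} \subseteq A_{v_j}$ directly. Second, one must upgrade ``$H \cap A_{v_i}$ lies in the extension cliques around $v_j$'' to the required ``$H \cap A_{v_i} \subseteq N(v_i)$'', since a hole vertex adjacent to $v_j$ need not be adjacent to $v_i$ when the two sit in distinct anticomplete cliques of $N(v_j)$. Ruling this out calls for a closer analysis of how the extension cliques of $\tilde{C}_{v_j}$ meet $H$, relying on the near-non-crossing structure of the revised separations (Lemma~\ref{lemma:new-sepns-noncrossing}) and on $(C_4, \text{diamond})$-freeness to forbid a common hole-neighbour of two such cliques. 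By contrast, the $v_i \in M$ case and the final diamond extraction are short and robust.
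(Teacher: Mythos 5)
The paper does not re-prove this lemma (it is quoted from \cite{tw4}), so there is no in-house proof to compare against; judged on its own terms, your proposal has a genuine gap. The skeleton is right and the case $v_i \in M$ is complete and correct: $H \subseteq \beta_M \subseteq B_{v_i} \cup \tilde{C}_{v_i}$ forces $H \cap A_{v_i} \subseteq \tilde{C}_{v_i}\setminus C_{v_i} \subseteq N(v_i)$, Lemma~\ref{lemma:no_wheels} applies (diamond-freeness rules out twin and universal wheels) and supplies $e \in H \cap A_{v_i}$, and the diamond on $v_i$, $e$ and the two $H$-neighbours of $e$ finishes. The problem is the case $v_i \notin M$, which you yourself call ``the main obstacle'' and do not resolve: the route through Lemma~\ref{lemma:shields} only confines $H \cap A_{v_i}$ to $\tilde{C}_{v_j}\setminus C_{v_j} \subseteq N(v_j)$, and, as you concede, a hole vertex adjacent to $v_j$ need not be adjacent to $v_i$, so the containment $H \cap A_{v_i} \subseteq N(v_i)$ that your endgame requires is not established. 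Deferring exactly this step to ``a closer analysis of how the extension cliques meet $H$'' leaves the hard half of the lemma unproved. (For the record, the other point you flag --- the nonemptiness hypothesis of Lemma~\ref{lemma:shields} --- is easy: $B_{v_i}$ and $B_{v_j}$ each have weight greater than $\tfrac12$, so they intersect.)

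The missing case can be closed, but more naturally by working with $S_{v_j}$ rather than $S_{v_i}$ and dropping the claim $H \cap A_{v_i} \subseteq N(v_i)$ altogether. Take $v_j \in M$ with $v_i \in A_{v_j}$ (it exists since $\leq_A$ is a partial order, Lemma~\ref{lemma:leqA-partial-order}); as $v_i \in \beta_M \subseteq B_{v_j} \cup \tilde{C}_{v_j}$ and $B_{v_j} \cap A_{v_j} = \emptyset$, we get $v_i \in \tilde{C}_{v_j}\setminus C_{v_j}$, so $v_i$ and $v_j$ are adjacent. Since $A_{v_j}$ is anticomplete to $B_{v_j}$, we have $N[v_i] \subseteq A_{v_j} \cup C_{v_j}$, while $H \subseteq B_{v_j} \cup \tilde{C}_{v_j}$; intersecting, every neighbour of $v_i$ on $H$ lies in $\tilde{C}_{v_j} \subseteq N[v_j]$. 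If $v_j \notin V(H)$, then $v_i$ and $v_j$ are adjacent with at least three common neighbours on $H$; by diamond-freeness their common neighbourhood is a clique, and a clique meets a hole in at most two vertices --- a contradiction. If $v_j \in V(H)$, then $N(v_i) \cap H$ must equal $\{v_j, p, q\}$ where $p, q$ are the two (non-adjacent) $H$-neighbours of $v_j$, and $\{v_i, v_j, p, q\}$ induces a diamond. Either way the wheel $(H, v_i)$ cannot exist, with no appeal to Lemma~\ref{lemma:shields} or to Lemma~\ref{lemma:new-sepns-noncrossing} needed.
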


We continue with a short result compiling two earlier ones.

\begin{lemma}
\label{lem:wheel-free}
Let $G$ be a (3PC, wheel)-free graph and let  $w:V(G) \to [0, 1]$ be a weight function on $G$. Then $G$ has a $(w, \frac{1}{2})$-balanced separator of clique cover number at most $2$. 
\end{lemma}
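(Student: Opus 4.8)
The plan is to combine the structural result Lemma~\ref{lemma:us-tw} with the converse direction provided by Lemma~\ref{lem:ta-to-weighted-separator}. The point of Lemma~\ref{lem:wheel-free} is to extract a statement purely about the existence of balanced separators from the bounds on $\tc$ that we already have for (3PC, wheel)-free graphs, so that this can be fed into the central bag machinery later.

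First I would observe that by Lemma~\ref{lemma:us-tw}, every (3PC, wheel)-free graph $G$ satisfies $\tc(G) \leq 2$. Next, I would invoke the $\tc$-analogue of Lemma~\ref{lem:ta-to-weighted-separator} (stated explicitly at the end of that lemma): since $\tc(G) \leq 2$, for every weight function $w : V(G) \to [0,1]$ with $w(G) = 1$ and every $c \in [\tfrac12, 1)$, the graph $G$ has a $(w,c)$-balanced separator of clique cover number at most $2$. Taking $c = \tfrac12$ gives exactly the conclusion of the present lemma. The only care needed is the normalization hypothesis: the statement of Lemma~\ref{lem:wheel-free} does not assume $w(G) = 1$, whereas the separator lemmas do.

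So the remaining step is to reduce the general weight function case to the normalized one. If $w(G) = 0$, then every set (in particular the empty set) is trivially a $(w,\tfrac12)$-balanced separator of clique cover number $0 \leq 2$, so I may assume $w(G) > 0$. In that case I would replace $w$ by the rescaled weight function $w' := w / w(G)$, which satisfies $w'(G) = 1$ and is still valued in $[0,1]$; applying the $\tc$-version of Lemma~\ref{lem:ta-to-weighted-separator} to $(G, w')$ with $c = \tfrac12$ yields a set $X$ with $w'(D) \leq \tfrac12$ for every component $D$ of $G \setminus X$ and with $\cchi(X) \leq 2$. Since balancedness is scale-invariant, $w(D) \leq \tfrac12 \, w(G)$ for every such $D$, and $X$ is the desired $(w,\tfrac12)$-balanced separator.

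I expect no genuine obstacle here; this is a bookkeeping lemma that packages two previously established facts. The only subtlety worth flagging is to make sure I am entitled to use the $\tc$-version rather than the $\ta$- or $\tw$-version of the weighted-separator lemma, and that the clique cover number bound (rather than independence number or size) is the one that propagates through; both are guaranteed by the final sentence of Lemma~\ref{lem:ta-to-weighted-separator}, which asserts that the analogous statement for $\tc$ holds, with the separator contained in a single bag so that its $\cchi$ is bounded by $\tc(G)$.
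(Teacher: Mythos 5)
Your proposal is correct and follows essentially the same route as the paper: apply Lemma~\ref{lemma:us-tw} to get $\tc(G) \leq 2$, then invoke the $\tc$-version of Lemma~\ref{lem:ta-to-weighted-separator} to extract the balanced separator. Your extra care about the normalization $w(G)=1$ is a reasonable bookkeeping addition that the paper's (very terse, and in fact slightly garbled) proof glosses over.
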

\begin{proof}
By Lemma~\ref{lemma:us-tw}, $\tc(G) \leq 2$. By Lemma \ref{lem:ta-to-weighted-separator}, $G$ has a $(w, \frac{1}{2})$-balanced separator of clique cover number at most 2. 
\end{proof}
Finally, we prove the main result of this section: that if $G$ is pyramid-free, then $\beta_M$ has a balanced separator of small clique cover number. 
\begin{theorem}
\label{thm:mainthm-betabs}
Let $G \in \CC$, let $w:V(G) \to [0, 1]$ be a weight function on $G$, and let $(\{v_1, \hdots, v_\ell\},$ $m, M, \tilde{S}_M)$ be the hub division of $G$. Let $\beta_M$ be the central bag for $\tilde{S}_M$ and let $w_M$ be the inherited weight function on $\beta_M$. Then, $\beta_M$ has a $(w_M, \frac{1}{2})$-balanced separator of clique cover number at most $9$.
\end{theorem}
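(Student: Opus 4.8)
The plan is to dispose of the hub vertices $v_1, \dots, v_{m-1}$ via the revised separations (so that the central bag $\beta_M$ contains no bad wheel hub among them, by Lemma~\ref{lemma:no-wheels-in-beta}), and then find a balanced separator of $\beta_M$ by a case analysis on whether $\beta_M$ still contains a ``problematic'' wheel. Concretely, I would first observe that $G \in \mathcal{C}$ is in particular $(C_4, \text{diamond}, \text{theta}, \text{pyramid}, \text{prism}, \text{even wheel})$-free, so $\beta_M$ inherits all these exclusions (central bags of a smooth collection are induced subgraphs, so hereditary properties pass down). I want to apply Lemma~\ref{lemma:balanced_vtx_bs}, which (with $c = \frac{1}{2}$) produces a $(w_M, \frac{1}{2})$-balanced separator of clique cover number at most $6 + k$ whenever there is a vertex $v \in \beta_M$ that is \emph{balanced in $G$} and satisfies $\cchi(N(v) \cap \Hub(\beta_M)) < k$. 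The natural candidate is $v_m$: by the definition of the hub division, $v_m \in \Hub(G) \setminus U$, so $v_m$ is balanced in $G$, and it lies in $\beta_M$.

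**The key step** is then to bound $\cchi(N(v_m) \cap \Hub(\beta_M))$. Here I would use that $v_m$ is trisimplicial (coming from the trisimplicial elimination ordering of $\Hub(G)$ guaranteed by Lemma~\ref{lem:trisimplicial}), so that $N(v_m) = K_1 \cup K_2 \cup K_3$ is a union of three cliques in $G$; hence $N(v_m) \cap \Hub(\beta_M)$ is covered by three cliques and $\cchi(N(v_m) \cap \Hub(\beta_M)) \le 3$. Crucially, the elimination ordering is of $\Hub(G)$ and $v_m$ is the first hub vertex that is balanced, so all earlier hub vertices $v_1, \dots, v_{m-1}$ have been separated off (they lie in the $A$-parts of the revised separations or are no longer hubs of $\beta_M$ by Lemma~\ref{lemma:no-wheels-in-beta}); I need to check that $v_m$ remains trisimplicial as a vertex of the induced subgraph $\beta_M$, which is immediate since deleting vertices can only shrink a neighbourhood that is already a union of three cliques. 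Taking $k = 3$ in Lemma~\ref{lemma:balanced_vtx_bs} then yields a $(w_M, \frac{1}{2})$-balanced separator of clique cover number at most $6 + 3 = 9$.

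**The one genuine obstacle** I anticipate is the degenerate case where no balanced hub vertex exists, i.e.\ $\Hub(G) \subseteq U$ and $m = \ell + 1$, so there is no $v_m$ to feed into Lemma~\ref{lemma:balanced_vtx_bs}. In that case I would argue that every vertex of $\beta_M$ fails to be a hub of $\beta_M$: Lemma~\ref{lemma:no-wheels-in-beta} already handles $v_1, \dots, v_{m-1}$, and since $\Hub(G) = \{v_1, \dots, v_\ell\}$ with $m - 1 = \ell$, every hub vertex of $G$ is covered, so $\Hub(\beta_M) = \emptyset$ and $\beta_M$ contains no wheel that is not a bug. Combined with pyramid-freeness (bugs are short pyramids, hence excluded), $\beta_M$ is $(\text{3PC}, \text{wheel})$-free, and I can apply Lemma~\ref{lem:wheel-free} directly to the weighted graph $(\beta_M, w_M)$ to obtain a $(w_M, \frac{1}{2})$-balanced separator of clique cover number at most $2 \le 9$. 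The only subtlety to verify carefully is that the wheels of $\beta_M$ are exactly controlled by $\Hub(\beta_M)$ together with bugs, so that the absence of non-bug-hubs plus pyramid-freeness really does give full $(\text{3PC}, \text{wheel})$-freeness; this is where I would spend the most care in writing up the argument.
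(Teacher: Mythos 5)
Your proposal follows the paper's proof almost exactly: the same case split on whether $m = \ell + 1$, the same use of Lemma~\ref{lemma:no-wheels-in-beta} and Lemma~\ref{lem:wheel-free} in the degenerate case, and the same application of Lemma~\ref{lemma:balanced_vtx_bs} to $v_m$ with $k = 3$ in the main case. There is, however, one genuine gap: you assert that $v_m$ ``lies in $\beta_M$'' as if this followed from the definition of the hub division, but it does not. Since $\beta_M = \bigcap_{u \in M}(\tilde B_u \cup \tilde C_u)$, you must rule out $v_m \in \tilde A_{v_i}$ for every $v_i \in M$, and this requires an argument. The paper's argument is: since $\tilde A_{v_i} \subseteq A_{v_i}$, it suffices to show $v_m \notin A_{v_i}$; if $v_m \in A_{v_i}$, then $N[v_m] \subseteq A_{v_i} \cup C_{v_i}$ (as $A_{v_i}$ is anticomplete to $B_{v_i}$), so the connected set $B_{v_i}$ lies inside a single component $D$ of $G \setminus N[v_m]$; since $v_i$ is unbalanced, $w(D) \geq w(B_{v_i}) > \frac{1}{2}$, contradicting the fact that $v_m$ is balanced. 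This is precisely where the balancedness of $v_m$ is used to place it in the central bag, and it cannot be skipped.

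A smaller imprecision: the trisimplicial elimination ordering of $\Hub(G)$ gives that $N(v_m) \cap \{v_m, \dots, v_\ell\}$ is a union of three cliques, not that all of $N(v_m)$ is a union of three cliques in $G$. Your conclusion $\cchi(N(v_m) \cap \Hub(\beta_M)) \leq 3$ still holds because, as you essentially note, $\Hub(\beta_M) \subseteq \Hub(G) \setminus \{v_1, \dots, v_{m-1}\} = \{v_m, \dots, v_\ell\}$ by Lemma~\ref{lemma:no-wheels-in-beta}, so the intersection with $\Hub(\beta_M)$ only meets the part of $N(v_m)$ covered by the three cliques. The rest of your write-up, including the observation that $\Hub(\beta_M) = \emptyset$ together with pyramid-freeness (hence bug-freeness) forces $\beta_M$ to be wheel-free in the degenerate case, matches the paper.
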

\begin{proof}
First, suppose that $m = \ell+1$. Then, by Lemma \ref{lemma:no-wheels-in-beta}, for every $v \in \Hub(G)$, $v \notin \Hub(\beta_M)$. Since $\Hub(\beta_M) \subseteq \Hub(G)$, and since $G$ is pyramid- (and therefore bug-)free, it follows that $\beta_M$ is wheel-free. By Lemma \ref{lem:wheel-free}, $\beta_M$ has a $(w_M, \frac{1}{2})$-balanced separator of clique cover number at most 2. 

Now, assume $m < \ell + 1$. We claim that $v_m \in \beta_M$. Suppose that $v_m \in A_{v_i}$ for some $v_i \in M$. Then, $N[v_m] \subseteq A_{v_i} \cup C_{v_i}$, so $B_{v_i}$ is contained in a connected component $D$ of $G \setminus N[v_m]$. Since $v_i \in M$, it follows that $v_i$ is unbalanced, so $w(B_{v_i}) > \frac{1}{2}$. But now $w(D) > \frac{1}{2}$, so $v_m$ is unbalanced, a contradiction. Therefore, $v_m \not \in A_{v_i}$ for all $v_i \in M$. Since for all $v_i \in M$ it holds that $\tilde{A}_{v_i} \subseteq A_{v_i}$, it follows that $v_m \in \tilde{B}_{v_i} \cup \tilde{C}_{v_i}$, and so $v_m \in \beta_M$.

Next, consider $N(v_m) \cap \Hub(\beta_M)$. By Lemma \ref{lemma:no-wheels-in-beta},  $\Hub(\beta_M) \subseteq \{v_{m}, v_{m+1}, \hdots, v_\ell\}$. Therefore, $\cchi(N(v_m) \cap \Hub(\beta_M)) \leq 3$. Finally, by Lemma \ref{lemma:S_M-smooth}, $\tilde{S}_M$ is a smooth collection of separations of $G$. Now, by Lemma \ref{lemma:balanced_vtx_bs}, $\beta_M$ has a $(w_M, \frac{1}{2})$-balanced separator of clique cover number at most $6 + 3 = 9$.  
\end{proof}

\section{Extending balanced separators}
\label{sec:extending-bs}
In this section, we prove that we can construct a balanced separator of $G$ of bounded $\cchi$ given a balanced separator of $\beta_M$ of bounded $\cchi$. This is where our proof differs significantly from \cite{tw4}. Together with the main result of the previous section, this is sufficient to prove Theorem~\ref{t1}. First, we need the following lemma, which is once more an adaptation of a corresponding result from \cite{tw4}, and is directly implied by the proof from there. 

\begin{lemma} \cite{tw4}
Let $G \in \CC$, let $w:V(G) \to [0, 1]$ be a weight function on $G$ with $w(G) = 1$, and let $(\{v_1, \hdots, v_\ell\}, m, M, \tilde{S}_M)$ be the hub division of $G$. Let $\beta_M$ be the central bag for $\tilde{S}_M$. Let $v \in M$ be such that $v$ is not a pyramid apex in $\beta_M$. Then $\cchi(N_{\beta_M}(v) \setminus \Hub(\beta_M))) \leq 2$. 
\label{lemma:small-nbrs}
\end{lemma}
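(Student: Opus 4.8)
The plan is to reduce the statement to Lemma~\ref{lemma:bounding-nbrhood-helper}, which bounds, for each \emph{individual} component of $\beta_M \setminus N[v]$, the number of anticomplete cliques of $N_{\beta_M}(v)$ attaching to it, and then to upgrade this per-component bound to a global one. Concretely, since $\beta_M$ is an induced subgraph of the diamond-free graph $G$, Lemma~\ref{lemma:clique-nbrs} lets me write $N_{\beta_M}(v) \setminus \Hub(\beta_M) = K_1 \cup \cdots \cup K_t$ as a union of pairwise disjoint, pairwise anticomplete cliques, so that $\cchi(N_{\beta_M}(v) \setminus \Hub(\beta_M)) = t$ and the goal is exactly to show $t \leq 2$. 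The hypotheses of Lemma~\ref{lemma:bounding-nbrhood-helper} are in place: $\tilde{S}_M$ is smooth by Lemma~\ref{lemma:S_M-smooth}, $v \in M = v(\tilde{S}_M) \subseteq \beta_M$, and $v$ is not a pyramid apex in $\beta_M$ by hypothesis. Hence every component $D$ of $\beta_M \setminus N[v]$ has a neighbor in at most two of $K_1, \dots, K_t$.

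Next I would localise the relevant part of the central bag to the ``$B_v$-side''. Since $\tilde{S}_v \in \tilde{S}_M$, we have $\beta_M \subseteq \tilde{B}_v \cup \tilde{C}_v$ with $\tilde{C}_v \subseteq N[v]$; as $\tilde{B}_v = B_v$ is a component of $G \setminus N[v]$, and hence disjoint from and anticomplete to $N[v]$, this yields $\beta_M \setminus N[v] = \beta_M \cap B_v$, a set anticomplete to $v$. Thus every component of $\beta_M \setminus N[v]$ lies inside $B_v$. Combining with the previous paragraph, it then suffices to prove that $K_1, \dots, K_t$ all have a neighbor in a \emph{single common} component of $\beta_M \cap B_v$: applying Lemma~\ref{lemma:bounding-nbrhood-helper} to that component forces $t \leq 2$.

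The crux is therefore the following two claims about the $B_v$-side of the central bag. First, each $K_i$ has a neighbor in $\beta_M \cap B_v$: if not, then, since distinct $K_j$ are anticomplete and each $K_i$ lies in one of the anticomplete $G$-cliques comprising $N(v)$, all neighbors of $K_i$ in $\beta_M$ would lie in $\{v\}$ together with the (hub) vertices of $K_i$'s $G$-clique, exhibiting a clique cutset of $\beta_M$; I would rule this out using that $G$ has no clique cutset together with the way the components of $G \setminus \beta_M$ attach to $\beta_M$. Second --- and this is the main obstacle --- $\beta_M \cap B_v$ is connected. Here I would use that $B_v$ is connected in $G$ and that, for each $u \in M \setminus \{v\}$, the separations $\tilde{S}_u$ and $\tilde{S}_v$ are nearly non-crossing (Lemma~\ref{lemma:new-sepns-noncrossing}): this forces each component of $\tilde{A}_u$ meeting $B_v$ to be a full component of $\tilde{A}_u \cup \tilde{A}_v$, which (as $B_v = \tilde{B}_v$ is anticomplete to $\tilde{A}_v$) attaches to $B_v$ only through $\tilde{C}_v \subseteq N[v]$, so that deleting $\bigcup_{u \in M \setminus \{v\}} \tilde{A}_u$ from the connected set $B_v$ leaves a connected set. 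Establishing this connectivity carefully, rather than the mere per-component bound of Lemma~\ref{lemma:bounding-nbrhood-helper}, is the technical heart of the argument, and is exactly the content that is lifted from the corresponding proof in \cite{tw4}.
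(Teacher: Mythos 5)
The paper does not actually prove this lemma --- it is imported from \cite{tw4} with the remark that it ``is directly implied by the proof from there'' --- so there is no in-paper argument to compare yours against. Judged on its own terms, your opening moves are sound: writing $N_{\beta_M}(v)\setminus\Hub(\beta_M)=K_1\cup\dots\cup K_t$ as pairwise anticomplete cliques so that the goal becomes $t\le 2$, checking the hypotheses of Lemma~\ref{lemma:bounding-nbrhood-helper}, and using $v\in M$ to get $\beta_M\setminus N[v]=\beta_M\cap B_v$ (here ``anticomplete to $N[v]$'' should be ``disjoint from $N[v]$ and anticomplete to $v$'', but only disjointness is used). The problem is that the entire argument then rests on two claims that you do not establish, and both are genuinely in doubt.

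First, the claim that every $K_i$ has a neighbour in $\beta_M\cap B_v$. Recall that $\tilde C_v$ is obtained from $C_v$ by adding, for each $u\in M\cap(C_v\setminus\{v\})$, the rest of the maximal clique through the edge $uv$; these added vertices lie in $N(v)\setminus N(B_v)$, i.e.\ they need not have \emph{any} neighbour in $B_v$, let alone in $\beta_M\cap B_v$. So a clique $K_i$ consisting of such vertices (together with $u$, whose $B_v$-neighbours may all land in some $\tilde A_x$) is a live possibility, and your clique-cutset argument does not dispose of it: a clique cutset of $\beta_M$ is not by itself a contradiction, since only $G$ is assumed to have no clique cutset, and the components of $G\setminus\beta_M$ attach to $\beta_M$ inside sets of the form $\tilde C_u\subseteq N[u]$ --- stars, not cliques --- so the cutset does not lift to $G$ in any routine way. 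Second, the connectivity of $\beta_M\cap B_v$. Your justification is internally inconsistent: since $\tilde C_v\subseteq N[v]$ is disjoint from $B_v$, a component $D$ of some $\tilde A_u$ that meets $B_v$ cannot ``attach to $B_v$ only through $\tilde C_v$'' --- that would say $D$ has no neighbours in $B_v$ at all. What the nearly-non-crossing property actually gives is $N(D)\subseteq\tilde C_u\subseteq N[u]$ with $N(D)\cap\tilde A_v=\emptyset$, and removing from the connected set $B_v$ a subset whose neighbourhood lies in $N[u]$ can perfectly well disconnect what remains. Indeed, the very existence in the paper of the per-component statement (Lemma~\ref{lemma:bounding-nbrhood-helper}) alongside the weaker $6$-clique bound for balanced vertices (Lemma~\ref{lemma:balanced_vtx_bs}) is a strong hint that $\beta_M\setminus N[v]$ is \emph{not} connected in general and that the bound of $2$ for $v\in M$ is obtained by exploiting the structure of $\tilde C_v$ (every vertex of $C_v\setminus\{v\}$ sees the connected set $B_v$ of $G$, and a third anticomplete clique attaching there forces a theta, prism, pyramid with apex $v$, or forbidden wheel via Lemma~\ref{lemma:three_vtx_attachments}), rather than by proving connectivity of the trace of $B_v$ on the central bag. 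As it stands, the heart of the proof is missing.
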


We also need the next lemma, which examines how three vertices can have neighbors in a connected subgraph. 
 \begin{lemma}[\cite{wallpaper}]
\label{lemma:three_vtx_attachments} 
Let $x_1, x_2, x_3$ be three distinct vertices of a graph $G$. Assume that $H$ is a connected
induced subgraph of $G \setminus \{x_1, x_2, x_3\}$ such that $H$ contains at least one neighbor of each of $x_1, x_2,
x_3$, and that subject to these conditions $V(H)$ is minimal subject to inclusion. Then one of the
following holds:
\begin{enumerate}[(i)]
    \item For distinct $i, j, k \in \{1, 2, 3\}$, there exists $P$ that is either a path from $x_i$ to $x_j$ or a
hole containing the edge $x_ix_j$ such that
\begin{itemize}
\item $H = P \setminus \{x_i, x_j\}$, and
\item either $x_k$ has at least two non-adjacent neighbors in $H$ or $x_k$ has exactly two neighbors
in H and its neighbors in H are adjacent.
\end{itemize}
\item There exists a vertex $a \in H$ and three paths $P_1, P_2, P_3$, where $P_i$ is from $a$ to $x_i$, such that
\begin{itemize}
\item $H = (P_1 \cup P_2 \cup P_3) \setminus \{x_1, x_2, x_3\}$, and
\item the sets $P_1 \setminus \{a\}$, $P_2 \setminus \{a\}$ and $P_3 \setminus \{a\}$ are pairwise disjoint, and
\item for distinct $i, j \in \{1, 2, 3\}$, there are no edges between $P_i \setminus \{a\}$ and $P_j \setminus \{a\}$, except
possibly $x_ix_j$.
\end{itemize}
\item There exists a triangle $a_1a_2a_3$ in $H$ and three paths $P_1, P_2, P_3$, where $P_i$ is from $a_i$ to $x_i$,
such that
\begin{itemize}
\item $H = (P_1 \cup P_2 \cup P_3) \setminus \{x_1, x_2, x_3\}$, and
\item the sets $P_1$, $P_2$, and $P_3$ are pairwise disjoint, and
\item for distinct $i, j \in \{1, 2, 3\}$, there are no edges between $P_i$ and $P_j$, except $a_ia_j$ and
possibly $x_ix_j$. 
\end{itemize}
\end{enumerate}
\end{lemma}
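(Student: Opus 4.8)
The plan is to exploit the minimality of $V(H)$ through two observations, and then read off the three outcomes from the resulting shape of $H$. To begin, for each $i$ fix a neighbour $a_i \in V(H)$ of $x_i$; since every connected induced subgraph of $H$ containing $a_1, a_2, a_3$ already contains a neighbour of each $x_i$, minimality forces $H$ to be a \emph{minimal} connected induced subgraph containing $\{a_1, a_2, a_3\}$. The first key observation is that every non-cutvertex of $H$ is a ``private'' neighbour of one of the $x_i$: if $v \in V(H)$ is not a cutvertex, then $H \setminus v$ is connected, so by minimality it must fail the attachment property, i.e.\ $v$ is the unique neighbour in $H$ of some $x_i$. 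Since two distinct vertices cannot both be the unique neighbour of the same $x_i$, distinct non-cutvertices are private to distinct $x_i$'s, and hence $H$ has at most three non-cutvertices.

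The second step is to determine the block structure of $H$, the main point being that no block can be a ``fat'' $2$-connected piece. Indeed, suppose some block of $H$ contained an induced cycle $C$ of length at least $4$. The vertices of $C$ adjacent to $\{x_1, x_2, x_3\}$ split $C$ into arcs, and since $|C| \geq 4$ some arc between consecutive attachment points has an interior vertex $c$; then $c$ is a neighbour of no $x_i$, so $H \setminus c$ is connected and retains a neighbour of each $x_i$, contradicting minimality. (If instead every vertex of $C$ is an attachment point, then as $|C| \geq 4 > 3$ some $x_i$ has two neighbours on $C$, and deleting one of them again contradicts minimality.) Hence the only $2$-connected blocks of $H$ are triangles. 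Combined with the bound of at most three non-cutvertices (equivalently, at most three leaf blocks in the block--cutvertex tree, which also rules out more than one triangle and any further branching), this forces $H$ to be one of exactly three shapes: an induced path, a ``tripod'' (three induced paths meeting at a single branch vertex), or a triangle with three pairwise disjoint induced paths attached at its vertices.

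It then remains to match each shape to an outcome while pinning down the \emph{full} neighbourhoods $N(x_i) \cap V(H)$, not merely the chosen $a_i$. The triangle shape gives outcome (iii), the tripod gives outcome (ii), and the path gives either (i) or (ii): if the ``middle'' vertex $x_k$ has a single neighbour in the interior of the path, that neighbour plays the role of the branch vertex $a$ and we are in outcome (ii), while otherwise we are in outcome (i), where a short argument shows $x_k$ has exactly two adjacent neighbours or at least two non-adjacent neighbours, as required. In each case the required edge restrictions --- that the three paths be pairwise disjoint and anticomplete except along triangle edges or the potential $x_ix_j$, and that each $x_i$ attach only along its own path --- follow from the same principle: any stray edge, or any attachment of $x_i$ away from the end of its own leg, would let us delete a pendant sub-path or an entire leg while still keeping a neighbour of every $x_i$, contradicting the minimality of $V(H)$.

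I expect the main obstacle to be precisely this last step: controlling the complete attachment of each $x_i$ to $H$ and verifying the delicate edge conditions, in particular the dichotomy in outcome (i). The block-structure reduction is clean and uniform, but each clause of (i)--(iii) needs its own ``if this edge or neighbour existed, here is a strictly smaller feasible connected subgraph'' argument, and care is needed to handle the degenerate cases --- paths of length zero, an $a_i$ coinciding with the branch or a triangle vertex, and the $x_ix_j$ edges --- consistently across all three outcomes.
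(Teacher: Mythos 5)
First, a point of reference: the paper does not prove this lemma at all --- it is imported verbatim from \cite{wallpaper} --- so there is no in-paper proof to compare against, and I am assessing your argument on its own terms. Your overall plan (minimality forces at most three non-cutvertices, hence a tightly constrained block structure, hence one of three shapes: path, tripod, or triangle with legs) is the natural one, and the trichotomy of shapes you arrive at is correct. However, the step that eliminates large blocks is flawed as written. You claim that an interior vertex $c$ of an arc of an induced cycle $C\subseteq B$ satisfies ``$H\setminus c$ is connected'' because $c$ has no neighbour among $x_1,x_2,x_3$; but lying on a cycle inside a block does not prevent $c$ from being a cutvertex of $H$ (other blocks may hang off $c$), and in fact your own first observation shows that every vertex with \emph{no} neighbour among the $x_i$ \emph{must} be a cutvertex of $H$ --- so the deletion argument does not go through for such $c$. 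Moreover, even granting that blocks contain no induced cycle of length at least $4$, a $2$-connected chordal block such as $K_4$ is not thereby reduced to a triangle. Both defects are repaired by pushing your non-cutvertex count harder: in any block $B$, every vertex that is not a cutvertex of $H$ is a private neighbour of a distinct $x_i$, and $H$ has at most three such vertices in total; together with the fact that each leaf block contributes at least one, this bounds every block by three vertices and the block tree by three leaves, and the induced-cycle detour becomes unnecessary.

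The second, and larger, gap is the one you flag yourself: the entire verification that the \emph{full} attachments $N(x_i)\cap V(H)$ and the cross-edges conform to the stated outcomes is compressed into ``the same principle.'' This is where most of the content of the lemma lives: one must show that in outcomes (ii) and (iii) each $x_i$ ends up with a unique neighbour at the tip of its own leg (so that $P_i$ is genuinely a chordless path and no edges run between $P_i\setminus\{a\}$ and $P_j\setminus\{a\}$), must establish the dichotomy for $x_k$ in outcome (i) (at least two non-adjacent neighbours, or exactly two adjacent ones --- ruling out, e.g., a single interior neighbour, which instead lands in outcome (ii)), and must handle the degenerate legs and the optional $x_ix_j$ edges consistently. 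Each of these requires its own explicit ``delete a pendant subpath or a whole leg'' argument, and none is carried out. As a proof plan this is viable and essentially the standard route for such connectifier lemmas; as a proof it is incomplete at precisely the step where the real work is.
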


Now we prove the main result of this section. 
\begin{theorem}
For any $t \in \mathbb N^+$, there exists $\Gamma'(t)$ with the following property. Let $G \in \CC$, let $w:V(G) \to [0, 1]$ be a weight function on $G$ with $w(G) = 1$, and let $(\{v_1, \hdots, v_\ell\},$ $m, M, \tilde{S}_M)$ be the hub division of $G$. Let $\beta_M$ be the central bag for $\tilde{S}_M$ and let $w_M$ be the inherited weight function on $\beta_M$. Assume that $\beta_M$ has a $(w_M, \frac{1}{2})$-balanced separator of clique cover number at most $t$. 
Then $G$ has a $(w, \frac{1}{2})$-balanced separator whose clique cover number is at most $\Gamma'(t)$. 
\label{thm:extending-bs}
\end{theorem}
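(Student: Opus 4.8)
The plan is to start from the given $(w_M,\tfrac12)$-balanced separator $X$ of $\beta_M$ with $\cchi(X)\le t$, and to convert it into a balanced separator $X'=X\cup Y$ of $G$ by adding a correction $Y$ of bounded clique cover. First I would analyse the components of $G\setminus\beta_M$ (the ``blobs''), each of which sits inside some $\tilde A_u$ and is \emph{anchored} at a vertex $u\in M$. A blob whose anchor $u$ lies outside $X$ is harmless: it attaches to $\beta_M$ only through $N[u]$, so all of its attachment surviving in $\beta_M\setminus X$ lies in the single component containing $u$, and its weight is already recorded at $u$ by $w_M$; hence such blobs never push a component of $\beta_M\setminus X$ above weight $\tfrac12$. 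The whole difficulty is therefore concentrated on the blobs whose anchor lies in $X$.

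Next I would pin down how such a blob can attach to $\beta_M$. Since a blob $D$ at $u$ lies in $\tilde A_u$, it attaches only through $\tilde C_u\setminus\{u\}$, and I would show this set is covered by at most two cliques of $G$: the heavy side $B_u$ attaches through $C_u\setminus\{u\}$, which is a union of at most two anticomplete cliques, since three pairwise non-adjacent attachment vertices together with $u$ and a minimal connected subgraph of $B_u$ supplied by Lemma~\ref{lemma:three_vtx_attachments} would produce a theta, pyramid, prism or even wheel, all forbidden in $\mathcal C$; and the revision only extends each such clique $K$ inside the single maximal clique containing $K\cup\{u\}$ (legitimate by diamond-freeness). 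Combined with Lemma~\ref{lemma:small-nbrs} and the trisimplicial ordering, which bound $\cchi(N_{\beta_M}(u))$, this shows each blob meets $\beta_M\setminus X$ inside at most two cliques, so it joins at most two components of $\beta_M\setminus X$, and moreover its own weight is $<\tfrac12$ because its anchor is unbalanced with heavy side $\tilde B_u\subseteq\beta_M$.

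The crux --- and the point where the argument departs from \cite{tw4} --- is that $X$ may contain \emph{unboundedly many} anchors, so one cannot simply throw every cutset $\tilde C_u$ into the separator. To get around this I would use the fact that, as $w(G)=1$, the graph $G\setminus X$ has at most one component $\Phi$ of weight exceeding $\tfrac12$, all others being already light, so it suffices to split $\Phi$. I would model $\Phi$ by the ``blob-graph'' whose nodes are the components $C_j\subseteq\Phi$ of $\beta_M\setminus X$ (each of $w_M$-weight $\le\tfrac12$) and whose links are the blobs anchored in $X$ (each of weight $<\tfrac12$, joining the at most two nodes met by its two attachment cliques), and then organise the underlying star separations into a laminar family using the partial order $\leq_A$ and the shield relation (Lemmas~\ref{lemma:leqA-partial-order} and~\ref{lemma:shields}). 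A centroid-type balancing argument on this laminar family would select only a bounded number of cutsets whose removal breaks $\Phi$ into pieces of weight $\le\tfrac12$; since each selected cutset contributes at most two cliques by the previous paragraph, the resulting correction $Y$ has bounded clique cover.

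Finally I would set $X'=X\cup Y$ and verify that it is a $(w,\tfrac12)$-balanced separator of $G$: every component of $G\setminus X'$ is either a component of $\beta_M\setminus X'$ together with its harmless blobs (weight $\le\tfrac12$ by $w_M$), an isolated heavy-anchor blob (weight $<\tfrac12$), or a controlled sub-piece of $\Phi$. The bound $\cchi(X')\le t+O(1)=:\Gamma'(t)$ then follows, using $C_4$-freeness (Lemma~\ref{lem:chi-bound}) to pass between clique cover and independence number where convenient. I expect the main obstacle to be exactly the crux paragraph: replacing the naive, potentially unbounded ``cut every anchor'' correction by a bounded one. This is what forces the combination of the single-heavy-component reduction with the nested structure of the canonical star separations, the per-blob two-clique attachment being precisely what keeps each individual cut cheap.
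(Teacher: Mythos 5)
Your set-up matches the paper's: extend the given separator $X$ of $\beta_M$ by a bounded correction, observe that components of $G\setminus\beta_M$ anchored outside $X$ are harmless (they attach to a single component of $\beta_M\setminus X$ and their weight is already charged to that component via $w_M$), and bound the clique cover number of each blob's attachment $N_{\beta_M}(D)\subseteq N_{\beta_M}[v(D)]$. These parts are sound (the paper gets $5$ cliques rather than your $2$, via Lemma~\ref{lemma:small-nbrs} plus the three Hub-cliques from the trisimplicial ordering, but the constant is immaterial). The reduction to a single heavy component $\Phi$ is also valid, though the paper does not need it.

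The genuine gap is exactly where you flag it: the selection of a bounded number of cuts. Your ``blob-graph'' on the components of $\beta_M\setminus X$, with links given by blobs anchored in $X$, is an arbitrary graph, not a tree, and the near-non-crossing/laminar structure of the separations $\tilde S_u$ does not transfer to it: two incomparable anchors in $X$ can link the same pair of components, and a priori the blob-graph could be a bounded-degree expander, in which case no bounded set of nodes gives a balanced split and a centroid-type argument on a laminar family has nothing to act on. What the paper actually proves is that this auxiliary bipartite incidence graph ($H_{\Core}$ in the text) has \emph{bounded treewidth}, and only then extracts a bounded-size balanced separator of it via Lemma~\ref{lemma:tw-to-weighted-separator}. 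Bounded treewidth is obtained by combining bounded degree on the blob side with the absence of long induced cycles (Wei\ss auer's theorem \cite{weissauer-local}); the no-long-holes claim is the structural heart of the argument and uses ingredients absent from your proposal: $X$ is first fattened to a union of \emph{maximal} cliques $\tilde X$ so that any two distinct anchors in a common clique are adjacent with all common neighbours inside $\tilde X$, and then a long hole in $H_{\Core}$ yields, via pigeonhole, two such anchors each with two non-adjacent neighbours on a hole of $G$ and no common neighbour on it, contradicting Lemma~\ref{lemma:common_nbrs}. Separately, to bound the clique cover number of the final correction you must bound how many anchors in $\tilde X$ attach to a single component $Q_j$ (the paper's bound of $3t$ via Lemma~\ref{lemma:three_vtx_attachments}); your proposal never addresses this, and without it the cut associated to a single selected node of the blob-graph is not of bounded clique cover number.
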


\setcounter{tbox}{0}
\begin{proof}

Let $X$ be a  $(w_M, \frac{1}{2})$-balanced separator of $\beta_M$ with $\cchi(X) \leq t$. Let $K_1, \dots, K_t$ be cliques covering $X$. Now for each $i$ with $1 \leq i \leq t$, let $\tilde{K_i}$ equal $K_i$ if $|K_i| = 1$, and let $\tilde{K_i}$ be a maximal clique in $G$ containing $K_i$ otherwise. We note that, since $G$ is diamond-free, $\tilde{K_i}$ is unique. Put $\tilde X := \bigcup\limits_{i = 1}^t \tilde{K_i}$, and note that $\tilde{X} \cap \beta_M$ is also a $(w_M, \frac{1}{2})$-balanced separator of $\beta_M$, with $\cchi(\tilde{X} \cap \beta_M) \leq t$. 
We claim that we have: 

\sta{\label{alpha-4} For each $x \in \{v_1, \dots, v_{m - 1}\}$, $\cchi(N_{\beta_M}(x)) \leq 5$.} 

We first show that this is true if $x \in M$. Indeed, by Lemma~\ref{lemma:no-wheels-in-beta}, $\Hub(\beta_M) \subseteq \{v_m, v_{m + 1}, \dots, v_\ell\}$. In particular, for each $x \in M$, $N_{\beta_M}(x) \cap \Hub(\beta_M)$ is the union of three cliques. Moreover, by Lemma~\ref{lemma:small-nbrs}, $\cchi(N_{\beta_M}(x) \setminus \Hub(\beta_M)) \leq 2$, and consequently $N_{\beta_M}(x)$ can be covered by 5 cliques as claimed. Now assume that $x \in \{v_1, \dots, v_{m - 1}\} \setminus M$. In this case, we may find $y \in M$ with $y <_A x$. But then $x \in A_y$, and $N_{\beta_M}(x) \subseteq \beta_M \cap C_y \subseteq \{y\} \cup N_{\beta_M}(y)$, which can be covered by 5 cliques by the previous argument. This yields (\ref{alpha-4}). 

\bigskip

Consider now the bipartite graph $H = (A, B)$ where $A = \{a_1, \dots, a_r\}$, $B = \{b_1, \dots, b_s\}$, each $a_i$ corresponds to a component $D_i$ of $G \setminus (\beta_M \cup \tilde{X})$, and each $b_i$ corresponds to a component $Q_i$ of $\beta_M \setminus \tilde{X}$, with an edge between $a_i$ and $b_j$ if there is an edge between $D_i$ and $Q_j$. Recall that each $D_i$ belongs to $A^*(f(v))$ for a unique $v \in \{v_1, \dots, v_{m - 1}\}$, namely its anchor. Write $v(i)$ to denote the anchor of $D_i$. Let $H_{\Core}$ be the subgraph of $H$ induced by $B \cup \{a_i \in A: v(i) \in \tilde{X} \cap \beta_M\}$. Define $\gamma := 8t + 3$. We claim that:

\sta{\label{no-long-hole} $H_{\Core}$ contains no hole of length at least $\gamma$.}

  Suppose for a contradiction that $H_{\Core}$ contains such a hole $L$. Without loss of generality, the vertices of $L$ are $a_1 \dd b_1 \dd a_2 \dd b_2 \dd \dots \dd a_{\gamma'} \dd b_{\gamma'} \dd a_1$ for some $\gamma' \geq \lceil\gamma/2\rceil > 4t$. Let $f_1, f_2, \dots, f_{2\gamma'}$ be the edges of $L$, directed along the hole, with $f_1 = (a_1, b_1)$. 

We may find a hole $W = x_1 \dd y_1 \dd P_1 \dd x_2 \dd y_2 \dd R_1 \dd x_3 \dd y_3 \dd P_2 \dd x_4 \dd y_4 \dd R_2 \dd \dots \dd P_{\gamma'} \dd x_{2\gamma'} \dd y_{2\gamma'} \dd R_{\gamma'} \dd x_1$ in $G \setminus \tilde{X}$ with the following properties: 

\begin{itemize}
    \item For each $i$ with $1 \leq i \leq 2\gamma'$, $x_i$, respectively $y_i$, belongs to the component corresponding to the tail, respectively head of $f_i$. 

    \item For each $i$ with $1 \leq i \leq \gamma'$, $P_i$ is a (possibly one-vertex) path in $Q_i$ between $y_{2i - 1}$ and $x_{2i}$. 

    \item For each $i$ with $1 \leq i \leq \gamma'$, $R_i$ is a (possibly one-vertex) path in $D_i$ between $y_{2i}$ and $x_{2i + 1}$ (indices modulo $2\gamma'$). 
\end{itemize}

\smallskip

Since $\gamma' > 4t$, the pigeonhole principle yields five distinct indices $i_1, \dots, i_{5}$ such that the anchors $v(i_1), \dots, v(i_{5})$ belong to the same clique, say $\tilde{K_1}$. It might be that some of those anchors coincide; however, we claim that at least two of them are distinct.
Indeed, note that, by (\ref{alpha-4}), a fixed $v \in \tilde{X} \cap M$ can have neighbors in at most five of the $Q_i$, since $\cchi(N_{\beta_M}(v)) \leq 5$. If all five anchors coincide, then we consider $N_L(\{a_{i_1}, \dots, a_{i_5}\})$ (recall that $L$ is the hole in $H_{\Core}$ that we started with). Since $L$ has size more than 10, this neighborhood consists of at least 6 nodes corresponding to components of $\beta_M \setminus \tilde{X}$. But then $v(i_1) (= v(i_2) = \dots = v(i_5))$ has neighbors in all 6 of those components, which is not possible. Thus we may assume that two of the anchors, say $v(i_1)$ and $v(i_2)$ are distinct.

We note that each of $v(i_1)$ and $v(i_2)$ has two non-adjacent neighbors on $W$. Indeed, this is the case by construction, since $D_{i_1}$ and $D_{i_2}$ each have edges to a pair of distinct $Q_j$s, and $N_{\beta_M}(D_i) \subseteq N_{\beta_M}[v(i)]$. But $v(i_1)$ and $v(i_2)$ are adjacent (since they are distinct vertices of $\tilde{K_1}$), and they have no common neighbors in $W$ (since all common neighbors of the two vertices belong to $\tilde{X}$). This is a contradiction to Lemma~\ref{lemma:common_nbrs}, which proves (\ref{no-long-hole}).

\smallskip

We next claim that:

\sta{\label{bdd-deg} We have $\deg_{H_{\Core}}(a_i) \leq 5$ for every $a_i \in A \cap H_{\Core}$.}

and 

\sta{\label{deg-1} We have $\deg_H(a_i) \leq 1$ for each $a_i \in A \setminus H_{\Core}$.} 

Indeed (\ref{bdd-deg}) follows directly from (\ref{alpha-4}), since we have $N_{\beta_M}(D_i) \subseteq N_{\beta_M}[v(i)]$. To see \eqref{deg-1}, note that, by the definition of $H_{\Core}$, any vertex $a_i \in A \setminus H_{\Core}$ has its anchor $v(i)$ contained in some component $Q$ of $\beta_M \setminus \tilde{X}$. In particular, the anchor (and thus the component $D_i$ of $G \setminus (\beta_M \cup \tilde{X})$ corresponding to $a_i$) cannot have neighbors in any other component of $\beta_M \setminus \tilde{X}$, meaning $\deg_H(a_i) \leq 1$ in this case, as claimed.

\smallskip

In \cite{weissauer-local}, it is shown that the treewidth of graphs without a complete bipartite subgraph and without a large induced hole is bounded (where the bound only depends on the sizes of the forbidden bipartite subgraph, and of the largest induced hole). Hence (\ref{no-long-hole}) and (\ref{bdd-deg}) imply that $k := \tw(H_{\Core})$ is bounded above by some constant depending only on $t$. By (\ref{deg-1}), $\tw(H) = k$ (assuming without loss of generality that it is at least 1). Let $w'_H$ be the weight function defined on $H$ by letting $w'_H(a_i)$, respectively $w'_H(b_j)$, be the $w$-weight of $D_i$, respectively $Q_j$. Let $w_H$ be the normalisation of $w'_H$, so that $w_H(H) = 1$. Explicitly, $w_H(x) := w'_H(x)/w'_H(H)$ for each $x \in H$ (and we note that we may assume $w'_H(H) \neq 0$, since otherwise $w(\tilde{X}) = 1$, and $\tilde{X}$ is a $(w, \frac{1}{2})$-balanced separator whose clique cover number is bounded by $t$). From Lemma~\ref{lemma:tw-to-weighted-separator}, it follows $H$ has a $(w_H, \frac{1}{2})$-balanced separator of size at most $k + 1$, which we call $Z$. Put $Z' := (Z \cap B) \cup N_H(Z \cap A)$, and note that (\ref{bdd-deg}) and (\ref{deg-1}) immediately imply:

\sta{\label{size-of-Z'} $|Z'| \leq 5(k + 1)$.}

\smallskip

We now claim the following:

\sta{\label{bdd-b} For every fixed $j \in [s]$, we have $|\{i \in [m - 1] : v_i \in \tilde{X} \cap \beta_M \text{ and } N(v_i) \cap Q_j \neq \emptyset\}| \leq 2t + 1$.} 

To see this, assume for a contradiction that this is not the case. Then we may find distinct $i_1, i_2, i_3$ such that $v_{i_1}, v_{i_2}$ and $v_{i_3}$ all belong to the same clique in the clique partition of $\tilde{X}$ and, say, $N(v_{i_\alpha}) \cap Q_1 \neq \emptyset$ for each $\alpha \in [3]$. We now apply Lemma~\ref{lemma:three_vtx_attachments} to those three vertices and a minimal induced subgraph of $Q_1$ containing neighbors of all three. We note that $Q_1$ contains no common neighbors of any two of the three vertices (since any such common neighbors belong to $\tilde{X}$, by diamond-freeness). In view of this, the second and third outcomes of the lemma imply that $\beta_M$ contains a pyramid, respectively a prism, which is impossible. In the first outcome, one of $v_{i_1}, v_{i_2}, v_{i_3}$ is the center of a wheel in $\beta_M$ which is not a bug -- once more impossible, by Lemma~\ref{lemma:no-wheels-in-beta}. This yields our desired contradiction and proves \eqref{bdd-b}.

\bigskip

We are now ready to construct our $(w, \frac{1}{2})$-balanced separator for $G$. To do so, we let $$J := \{j \in [s]: b_j \in Z'\},$$ and $$I := \bigcup\limits_{j \in J} \{i \in [r] : v_i \in \tilde{X} \cap \beta_M \text{ and } N(v_i) \cap Q_j \neq \emptyset\}.$$ We then put $$Y := \tilde{X} \cup \bigcup\limits_{i \in I} N_{\beta_M}(v_i).$$

By (\ref{alpha-4}), (\ref{size-of-Z'}) and (\ref{bdd-b}), we know that $Y$ can be covered by at most $t + 5 \cdot (2t + 1) \cdot 5(k + 1)$ cliques ($t$ cliques for $\tilde{X}$, and 5 cliques for each of the neighborhoods, of which there are at most $|I| \leq (2t+1)|Z'|$). This yields our desired upper bound on $\cchi(Y)$ which depends only on $t$. It remains to check that $Y$ is, indeed, a $(w, \frac{1}{2})$-balanced separator of $G$. 

\bigskip

To do so, we first note that every component of $H \setminus Z'$ is either contained inside a component of $H \setminus Z$, or consists of a single vertex in $Z \cap A$. In particular, we have: 

\sta{\label{comps-of-h-z} For every component $F$ of $H \setminus Z'$, if $F^G$ is the union of all $D_i$ and $Q_j$ corresponding to vertices of $F$, then $w(F^G) \leq \frac{1}{2}$.} 

Indeed, this is true if $F$ consists of a single vertex in $Z \cap A$, since then $F^G$ belongs to $A(f(v))$ for some unbalanced $v$, and so $$w(F^G) \leq w(A(f(v)) \leq \frac{1}{2}.$$ It is also true if $F$ belongs to some component of $H \setminus Z$, since then $w_H(F) \leq \frac{1}{2}$, and we have $$w(F^G) = w'_H(F) \leq w_H(F) \leq \frac{1}{2}.$$ 

\bigskip

Now let $S$ be a component of $G \setminus Y$. In particular, $S \subseteq D_1 \cup \dots \cup D_r \cup Q_1 \cup \dots \cup Q_s$. Let $S_H := H[\{a_i : S \cap D_i \neq \emptyset\} \cup \{b_j : S \cap Q_j \neq \emptyset\}]$, and note that $S_H$ is a connected induced subgraph of $H$. Finally, we look at two cases:

\bigskip

\begin{itemize}
    \item $S_H \cap Z' = \emptyset$. In this case, $S_H$ is contained in a connected component of $H \setminus Z'$, and $S \subseteq (S_H)^G$ (where as above, $(S_H)^G$ is the union of the connected components corresponding to vertices of $S_H$), so $w(S) \leq \frac{1}{2}$ by (\ref{comps-of-h-z}).

    \bigskip

    \item $S_H \cap Z' \neq \emptyset$. In this case, we first note that $|S_H \cap Z'| = 1$. Indeed, let $b_j \in S_H \cap Z'$, and note that, by construction of $Y$, the only $D_i$ with a neighbor in $Q_j \setminus Y$ are $D_i$ with $v(i) \in Q_j$. These $D_i$ have no neighbors in any other $Q_j$ (since $N(D_i) \subseteq N[v(i)]$). In particular, $S_H$ is a star centered at $b_j$, and $v(i) \in Q_j$ for all $a_i \in S_H$. Now

    $$w(S) \leq w((S_H)^G) \leq w_M(Q_j) \leq \frac{1}{2},$$
    as required.
\end{itemize}

\bigskip

This finishes the proof.
\end{proof}

Finally, we restate and prove Theorem~\ref{t1}. 
\setcounter{section}{1}
\setcounter{theorem}{1}
\begin{theorem}
\label{thm:C-bdd-tw}
There exists an integer $\Gamma$ such that $\ta(G) \leq \Gamma$ for every graph $G \in \mathcal C$. 
\end{theorem}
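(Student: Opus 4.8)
The plan is to assemble the final theorem by combining the two main technical results of the preceding sections with the reduction machinery from the Tools section. The overall strategy is: reduce from $\ta$ to $\tc$ (via Lemma~\ref{lem:ta-vs-tc}, since graphs in $\mathcal C$ are $C_4$-free), reduce to graphs without clique cutsets (via Lemma~\ref{lemma:clique-cutsets-tc}), and then show that any $G \in \CC$ has a bounded-$\cchi$ balanced separator for \emph{every} normalised weight function, which by Lemma~\ref{lemma:bs-to-ta} bounds $\tc(G)$ and hence $\ta(G)$.

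First I would observe that by Lemma~\ref{lemma:clique-cutsets-tc} it suffices to bound $\tc$ on graphs $G \in \CC$ (that is, graphs in $\mathcal C$ with no clique cutset), since $\tc(G)$ for general $G \in \mathcal C$ equals the maximum over its clique-cutset-free induced subgraphs, all of which lie in $\CC$. So fix such a $G$ and an arbitrary weight function $w : V(G) \to [0,1]$ with $w(G)=1$. I form the hub division $(\{v_1,\hdots,v_\ell\}, m, M, \tilde S_M)$ of $G$, which exists because every graph in $\mathcal C \subseteq \mathcal C^*$ admits a trisimplicial elimination ordering by Lemma~\ref{lem:trisimplicial}. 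By Lemma~\ref{lemma:S_M-smooth} the collection $\tilde S_M$ is smooth, so the central bag $\beta_M$ and inherited weight function $w_M$ are well-defined.

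Next I would invoke the two section results in sequence. Theorem~\ref{thm:mainthm-betabs} guarantees that $\beta_M$ has a $(w_M, \tfrac12)$-balanced separator of clique cover number at most $9$. Feeding this into Theorem~\ref{thm:extending-bs} with $t = 9$ yields that $G$ itself has a $(w, \tfrac12)$-balanced separator whose clique cover number is at most $\Gamma'(9)$, a constant independent of $G$ and $w$. Since $w$ was arbitrary, the hypothesis of Lemma~\ref{lemma:bs-to-ta} is satisfied with $k = \Gamma'(9)$ and $c = \tfrac12$ (using that $G$ is $C_4$-free), giving $\tc(G) \leq g(\Gamma'(9))$. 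Combining over all clique-cutset-free induced subgraphs via Lemma~\ref{lemma:clique-cutsets-tc}, and then passing from $\tc$ to $\ta$ via Lemma~\ref{lem:ta-vs-tc}, we obtain $\ta(G) \leq \tc(G) \leq g(\Gamma'(9)) =: \Gamma$ for all $G \in \mathcal C$.

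The substantive content has already been discharged in the two preceding theorems, so the final assembly is essentially bookkeeping; the one point I would be careful about is the quantifier order. The balanced-separator-to-$\tc$ lemma requires a bounded separator for \emph{every} weight function simultaneously, so I must make sure that the bound $\Gamma'(9)$ produced by Theorem~\ref{thm:extending-bs} genuinely depends only on $t=9$ and not on $w$ --- which it does, since both section theorems are stated uniformly in $w$. The only real obstacle anywhere in the argument was concentrated in Theorem~\ref{thm:extending-bs} (the extension step, where the bipartite auxiliary graph $H$ and the Weissauer bounded-treewidth argument control how components reattach), and that work is now complete, so here the proof reduces to chaining the cited lemmas together.
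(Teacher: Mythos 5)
Your proposal is correct and follows essentially the same route as the paper: reduce to the clique-cutset-free case, chain Theorem~\ref{thm:mainthm-betabs} and Theorem~\ref{thm:extending-bs} to get a uniformly bounded balanced separator, then apply Lemma~\ref{lemma:bs-to-ta} and Lemma~\ref{lem:ta-vs-tc}. (You even cite the $\tc$ version of the clique-cutset lemma where the paper's proof cites the treewidth version, which is the more precise reference.)
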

\begin{proof}
By Lemma \ref{lemma:clique-cutsets-tc}, we may assume that $G$ has no clique cutset, so $G \in \CC$. Let $w:V(G) \to [0, 1]$ be a weight function on $G$ with $w(G) = 1$, and let $(\{v_1, \hdots, v_\ell\}, m, M, \tilde{S}_M)$ be the hub division of $G$. Let $\beta_M$ be the central bag for $\tilde{S}_M$ and let $w_M$ be the inherited weight function on $\beta_M$. By Theorem \ref{thm:mainthm-betabs}, $\beta_M$ has a $(w_M, \frac{1}{2})$-balanced separator of clique cover number at most $9$. Now, by Theorem \ref{thm:extending-bs}, $G$ has a $(w, \frac{1}{2})$-balanced separator of clique cover number at most $\Gamma'(9)$. 
Finally, by Lemma \ref{lemma:bs-to-ta}, $\tc(G) \leq \Gamma := g(\Gamma'(9), \frac{1}{2})$, where $g$ is as defined in the lemma, and by Lemma~\ref{lem:ta-vs-tc}, $\ta(G) \leq \tc(G)$.
\end{proof}

\section*{Acknowledgement}

This research was in part performed during the Second 2022 Barbados Graph Theory Workshop at the McGill
University Bellairs Research Institute in Barbados, and the authors are grateful to the institute for
its facilities and hospitality. 



\begin{thebibliography}{10}







\bibitem{aboulker}
P. Aboulker, I. Adler, E. J. Kim, N. L. D. Sintiari and N. Trotignon. On the tree-width of even-hole-free
graphs. {\em Euro. J. Combin.} {\bf 98} (2021), 103394.

\bibitem{tw10}
T. Abrishami, B. Alecu, M. Chudnovsky, S. Hajebi and S. Spirkl. Induced subgraphs and tree decompositions XI. Towards logarithmic treewidth for even-hole-free graphs. Upcoming preprint.

\bibitem{tw9} 
B. Alecu, M. Chudnovsky, S. Hajebi and S. Spirkl. Induced subgraphs and tree decompositions X. Local structure for even-hole-free graphs of large treewidth. Upcoming preprint.

\bibitem{wallpaper}
T. Abrishami, M. Chudnovsky, C. Dibek, S. Hajebi, P. Rz\k{a}\.{z}ewski, S. Spirkl and K. Vu\v{s}kovi\'c. ``Induced subgraphs and tree decompositions II. Toward walls and their line graphs in graphs of bounded degree.''  {\em Electron. J. Combin.} {\bf 30}(2) (2023), P2.42. 

\bibitem{pmcpaper}
T. Abrishami, M. Chudnovsky, C. Dibek, S. Thomasse\'e, N. Trotignon and K. Vu\v{s}kovi\'c. Graphs with polynomially many minimal separators. 
{\em J. Combin. Theory Ser. B} {\bf 152} (2022). 248--280.

\bibitem{logpaper}
T. Abrishami, M. Chudnovsky, S. Hajebi and S. Spirkl. Induced subgraphs and tree decompositions III. Three-path-configurations and logarithmic treewidth. {\em Advances in Combinatorics},  2022:6, 29 pp. (2022). 

\bibitem{tw4} 
T. Abrishami, M. Chudnovsky, S. Hajebi and S. Spirkl. Induced subgraphs and tree decompositions IV. (Even hole, diamond, pyramid)-free graphs.  {\em J. Combin. Theory Ser. B} {\bf 139} (2013), 342--352.

    
\bibitem{tw1}
T. Abrishami, M. Chudnovsky and K. Vu\v{s}kovi\'c. Induced subgraphs and tree decompositions I. Even-hole-free graphs of bounded degree.
   \textit{J. Combin. Theory Ser. B} {\bf 157}  (2022), 144--175.

\bibitem{alekseev}
V. E. Alekseev. On the number of maximal independent sets in graphs from
hereditary classes. {\em Combinatorial-Algebraic Methods in Discrete Optimization, University of Nizhny Novgorod} (1991), 5--8 (in Russian).

\bibitem{dynamic-programming}
 H.~L. Bodlaender.
\newblock {Dynamic programming on graphs with bounded treewidth.}  
\newblock In: {\em Proceedings of the 15th International Colloquium on Automata, Languages and Programming, ICALP 1988.} (1988), 105--118.

\bibitem{clique-cutsets-tw}
 H. Bodlaender and A. Koster. Safe separators for treewidth. {\em Disc. Math.} {\bf 306}(3) (2006), 337-–350.

 \bibitem{DBLP:journals/siamcomp/BouchitteT01}
V. Bouchitt{\'{e}} and I. Todinca.
\newblock Treewidth and minimum fill-in: Grouping the minimal separators.
\newblock {\em {SIAM} J. Comput.} {\bf 31}(1) (2001), 212--232.

\bibitem{DBLP:journals/tcs/BouchitteT02}
V. Bouchitt{\'{e}} and I. Todinca.
\newblock Listing all potential maximal cliques of a graph.
\newblock {\em Theor. Comput. Sci.} {\bf 276}(1-2) (2002), 17--32.

 
 \bibitem{cameron-vuskovic}
K. Cameron, M.V. da Silva, S. Huang and K. Vušković. Structure and algorithms for (cap, even hole)-free graphs.
{\em Disc. Math.} {\bf 341}(2) (2018), 463--473. 

\bibitem{ehf-bisimplicial}
        M. Chudnovsky and P. Seymour. Even-hole-free graphs still have bisimplicial vertices. {\em J. Combin. Theory Ser. B} {\bf 161} (2023), 331--381.

\bibitem{cckv-us}
M. Conforti, G. Cornu\'ejols, A. Kapoor and K. Vu\v{s}kovi\'c. Universally signable graphs. 
{\em Combinatorica} {\bf 17}(1) (1997), 67--77.
 
\bibitem{cygan}
M. Cygan, F.V. Fomin, L. Kowalik, D. Lokshtanov, D. Marx, M. Pilipczuk, M. Pilipczuk and S. Saurabh. Parameterized algorithms. Springer, (2015).

\bibitem{daSilvaVuskovic}
M. V. da Silva and K. Vušković. Decomposition of even-hole-free graphs with star cutsets and 2-joins.
{\em J. Combin. Theory Ser. B}{\bf 103}(1) (2013), 144--183.

\bibitem{dfgkm}
    C. Dallard, F. Fomin, P. Golovach, T. Korhonen and M. Milani\v{c}.
    Computing {T}ree {D}ecompositions with {S}mall {I}ndependence {N}umber. 
    Preprint available at \url{https://arxiv.org/abs/2207.09993}.

\bibitem{dms1}
    C. Dallard, M. Milani\v{c} and K. \v{S}torgel. Treewidth versus clique number. {I}. Graph classes with a forbidden subgraph.
    {\em SIAM J. Disc. Math} {\bf 35}(4) (2021).

\bibitem{dms2}
    C. Dallard, M. Milani\v{c} and K. \v{S}torgel. Treewidth versus clique number. {II}. Tree-independence number.
    {\em J. Combin. Theory Ser. B}{\bf 164} (2024), 404--442.
    
\bibitem{dms3}
    C. Dallard, M. Milani\v{c} and K. \v{S}torgel. Treewidth versus clique number. {III}. Tree-independence number of graphs with forbidden structures.
    Preprint available at \url{https://arxiv.org/abs/2206.15092}.

\bibitem{farber}
    M. Farber. On diameters and radii of bridged graphs. {\em Disc. Math.} {\bf 73} (1989), 249--260. 

\bibitem{DBLP:conf/stacs/FominV10}
F.V. Fomin and Y. Villanger.
\newblock Finding induced subgraphs via minimal triangulations.
\newblock In: {\em Proceedings of the 27th
  International Symposium on Theoretical Aspects of Computer Science, {STACS}
  2010} (2010), 383--394.


\bibitem{params-tied-to-tw}
D.J. Harvey and D.R. Wood. Parameters Tied to Treewidth. {\em J. Graph Theory} {\bf 84}(4) (2017), 364–-385.


\bibitem{kmv}
T. Kloks, H. M\"uller and K. Vu\v{s}kovi\'c. Even-hole-free graphs that do not contain diamonds: a structure theorem and its consequences.
{\em J. Combin. Theory Ser. B} {\bf 99} (2009), 733--800.

\bibitem{Korhonen}
  T. Korhonen, Grid Induced Minor Theorem for Graphs of Small Degree. {\em J. Combin. Theory Ser. B} {\bf 160} (2023) 206--214.


\bibitem{isk4}
B. L\'{e}v\^{e}que, F. Maffray and N. Trotignon. ``On graphs with no induced subdivision of $K_4$.'' \textit{J. Combin. Theory Ser. B}, {\bf 102}(4) (2012), 924--947. 

\bibitem{DBLP:conf/soda/LokshantovVV14}
D. Lokshantov, M. Vatshelle, and Y. Villanger.
\newblock Independent set in \emph{P}\({}_{\mbox{5}}\)-free graphs in
  polynomial time.
\newblock In: {\em Proceedings of the 25th
  Annual {ACM-SIAM} Symposium on Discrete Algorithms, {SODA} 2014} (2014), 570--581.

\bibitem{reed}
    B.A.~Reed. Tree width and tangles: a new connectivity measure and some applications. In: {\em Surveys in Combinatorics}, vol. 241 of {\em London Math. Soc. Lecture Notes Ser.} (1997), 87--162.

\bibitem{RS-GMV}
N. Robertson and P.D.~Seymour. Graph minors. V. Excluding a planar graph. \textit{J. Combin. Theory Ser. B} {\bf 41}(1) (1996), 92–-114.

\bibitem{layered-wheels}
N.L.D. Sintiari and N. Trotignon. (Theta, triangle)-free and (even-hole, K4)-free graphs. Part 1: Layered
wheels. {\em J. Graph Theory} {\bf 97}(4) (2021), 475--509.

\bibitem{vuskovic-truemper}
K. Vu\v{s}kovi\'{c}. The world of hereditary graph classes viewed through Truemper configurations. In: \textit{Surveys in Combinatorics 2013}. Edited by S.R.~Blackburn, S.~Gerke and M.~Wildon. London Mathematical Society Lecture Note Series. Cambridge University Press. (2013) 265--326. 

\bibitem{wagon} S. Wagon. A bound on the chromatic number of graphs without certain induced subgraphs. \textit{J. Combin. Theory Ser. B} {\bf 29}(3) (1980), 345--346.

\bibitem{weissauer-local} D. Wei\ss auer. In absence of long chordless cycles, large tree-width becomes a local phenomenon. {\em J. Combin. Theory Ser. B} {\bf 139} (2013), 342--352.
\end{thebibliography}
\end{document}